\newcommand{\mbf}[1]{\mathbf{#1}}
\newcommand{\mbb}[1]{\mathbb{#1}}
\newcommand{\mbfs}[1]{\boldsymbol{#1}}
\newcommand{\mcf}[1]{\mathcal{#1}}
\newcommand{\fpt}{\mathcal{O}_{{\rm FPT}}}
\newcommand{\conv}{{\rm conv}}
\newcommand{\supp}{{\rm supp}}
\newcommand{\Span}{{\rm span}}
\newcommand{\floor}[1]{\lfloor#1\rfloor}
\begin{document}

\title{A Colorful Steinitz Lemma with Application to Block-Structured Integer Programs}

\titlerunning{A Colorful Steinitz Lemma with Applications to Block IPs}

\author{Timm Oertel \and Joseph Paat\and Robert Weismantel}%\email{iauthor@gmail.com}
\authorrunning{Oertel et al.}
\institute{T. Oertel \at Department of Data Science, Friedrich-Alexander Universit\"{a}t, Erlangen-N\"{u}rnberg, Germany
\and
J. Paat \at Sauder School of Business, University of British Columbia, Vancouver BC, Canada
\and
R. Weismantel \at Department of Mathematics, Institute for Operations Research, ETH Z\"{u}rich, Switzerland}
\date{}

\maketitle

\begin{abstract}The Steinitz constant in dimension $d$ is the smallest value $c(d)$ such that for any norm on $\mbb{R}^{ d}$ and for any finite zero-sum sequence in the unit ball, the sequence can be permuted such that the norm of each partial sum is bounded by $c(d)$. 
Grinberg and Sevastyanov prove that $c(d) \le d$ and that the bound of $d$ is best possible for arbitrary norms; we refer to their result as the Steinitz Lemma.
We present a variation of the Steinitz Lemma that permutes multiple sequences at one time. 
Our result, which we term a {\it colorful Steinitz Lemma}, demonstrates upper bounds that are independent of the number of sequences. 

Many results in the theory of integer programming are proved by permuting vectors of bounded norm; this includes proximity results, Graver basis algorithms, and dynamic programs.
Due to a recent paper of Eisenbrand and Weismantel, there has been a surge of research on how the Steinitz Lemma can be used to improve integer programming results. 
As an application we prove a proximity result for block-structured integer programs. 

\keywords{The Steinitz Lemma, Discrete Geometry, Block Structured Integer Programs}

\end{abstract}
%%\pacs[JEL Classification]{D8, H51}

%%\pacs[MSC Classification]{35A01, 65L10, 65L12, 65L20, 65L70}

%%%%%%%%%%%%%%%%%%%%

\section{Introduction.}

Let $\|\cdot\|: \mbb{R}^d \to \mbb{R}$ be an arbitrary norm with corresponding unit ball 
\[
\mcf{U} := \{ \mbf{x} \in \mbb{R}^d:\ \|\mbf{x}\|\le 1\}.
\]
A sequence $(\mbf{u}^i)_{i=1}^m$ in $\mbb{R}^d$ is a {\bf zero-sum sequence} if $\sum_{i=1}^m \mbf{u}^i = \mbf{0}$.
Grinberg and Sevastyanov prove the following result on zero-sum sequences.
We refer to Theorem~\ref{thmSteinitz} as the {\it Steinitz Lemma} because Steinitz~\cite{S1913} originally proves the result, albeit with a larger upper bound. 

\begin{theorem}[Grinberg and Sevastyanov~\cite{GS1980}]\label{thmSteinitz}
Let $\|\cdot\|: \mbb{R}^d \to \mbb{R}$ be a norm with unit ball $\mcf{U}$.
For every zero-sum sequence $(\mbf{u}^i)_{i=1}^m$ in $\mcf{U}$, there exists a permutation $\pi \in \mcf{S}^m$ such that 
\[
\left\|\sum_{i=1}^k \mbf{u}^{\pi(i)} \right\| \le d
\] 
for each $k \in \{ 1, \ldots, m\}$.
\end{theorem}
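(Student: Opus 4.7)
My plan is to proceed by induction on $m$. The base case $m \le d$ is immediate from the triangle inequality: any permutation works, since each partial sum consists of at most $d$ vectors from $\mcf{U}$.

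For the inductive step $m > d$, the approach is to first identify a proper nontrivial subset $I \subsetneq \{1,\ldots,m\}$ whose corresponding partial sum already has norm at most $d$, and then to recursively order the two blocks $I$ and $[m] \setminus I$. To locate such an $I$, I plan to use the polytope
\[
\Lambda := \bigl\{\boldsymbol{\lambda} \in [0,1]^m : \sum_{i=1}^m \lambda_i \mathbf{u}^i = \mathbf{0} \bigr\},
\]
which is nonempty since the zero-sum condition gives $\mathbf{1} \in \Lambda$. Because $\Lambda$ is cut out by $d$ linear equalities together with the $2m$ box inequalities, every vertex has at most $d$ strictly fractional coordinates. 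When $m > d$ the kernel of the linear map $\boldsymbol{\lambda} \mapsto \sum \lambda_i \mathbf{u}^i$ has positive dimension, so one can descend from $\mathbf{1}$ along an edge of $\Lambda$ to reach a vertex $\boldsymbol{\lambda}^\ast \ne \mathbf{1}$. Set $I := \{i : \lambda_i^\ast = 1\}$ and $F := \{i : 0 < \lambda_i^\ast < 1\}$, so $|F| \le d$. Rearranging $\sum_i \lambda_i^\ast \mathbf{u}^i = \mathbf{0}$ gives
\[
\sum_{i \in I} \mathbf{u}^i \;=\; -\sum_{i \in F} \lambda_i^\ast \mathbf{u}^i,
\]
whose norm is at most $\sum_{i \in F} \lambda_i^\ast \|\mathbf{u}^i\| \le |F| \le d$. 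Since $I \ne [m]$ and $I \ne \emptyset$ (the latter by a symmetric argument, or by replacing $\boldsymbol{\lambda}^\ast$ by $\mathbf{1} - \boldsymbol{\lambda}^\ast \in \Lambda$ when needed), the sequence has been split into two shorter pieces each with bounded residual sum.

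The main obstacle will be combining the orderings of the two blocks into a single permutation whose \emph{every} intermediate partial sum remains $\le d$. Direct concatenation is insufficient, because appending the second block to a partial sum of norm $d$ could produce intermediate norms up to $2d$. I would resolve this by strengthening the inductive hypothesis to allow a controlled nonzero offset (namely, by proving the stronger statement that any sequence in $\mcf{U}$ summing to a vector $\mathbf{s}$ with $\|\mathbf{s}\| \le d$ admits an ordering whose partial sums lie in $d\,\mcf{U} \cup (\mathbf{s} + d\,\mcf{U})$, or a similar variant), and by using the fractional residual $\sum_{i \in F} \lambda_i^\ast \mathbf{u}^i$ as an explicit linking piece between the orderings of the two blocks. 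Recursive application of the splitting lemma, together with this refined hypothesis, should then deliver the full permutation with partial sums bounded by $d$.
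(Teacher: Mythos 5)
First, a point of reference: the paper does not prove Theorem~\ref{thmSteinitz} at all --- it is imported verbatim from Grinberg and Sevastyanov~\cite{GS1980} and used as a black box --- so your attempt must stand on its own. As written, it does not: the gap sits exactly where you flag ``the main obstacle,'' namely recombining the two blocks, and the proposed repair does not work. The strengthened hypothesis you float (every partial sum lies in $d\,\mcf{U} \cup (\mbf{s} + d\,\mcf{U})$) fails to close under concatenation: if the first block sums to $\mbf{s}_1$ and the second to $\mbf{s}-\mbf{s}_1$, the partial sums during the second block lie in $(\mbf{s}_1 + d\,\mcf{U}) \cup (\mbf{s} + d\,\mcf{U})$, and $\mbf{s}_1 + d\,\mcf{U}$ is not contained in $d\,\mcf{U} \cup (\mbf{s} + d\,\mcf{U})$ in general, so the induction does not reproduce its own hypothesis. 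Even granting it, at the top level ($\mbf{s}=\mbf{0}$, $\|\mbf{s}_1\|\le d$) you would only conclude a bound of $2d$, not the sharp bound $d$ claimed in the statement; any two-block scheme that tolerates an offset of size up to $d$ between the blocks is structurally capped at $2d$. There is also a smaller hole in the splitting step itself: the vertex $\lambda^*$ adjacent to $\mbf{1}$ may be $\mbf{0}$, and complementation then returns $\mbf{1}$, so no proper nontrivial $I$ is produced (e.g.\ $d=1$, $m=2$, $\mbf{u}^1 = -\mbf{u}^2$, where $\Lambda$ is the diagonal segment of $[0,1]^2$ with vertices $\mbf{0}$ and $\mbf{1}$ only).

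The missing idea is that the vertex argument should be iterated monotonically rather than recursively. The Grinberg--Sevastyanov proof builds a single nested chain $A_m \supset A_{m-1} \supset \cdots \supset A_d$ with $|A_k|=k$, together with coefficients $\lambda^{(k)} \in [0,1]^{A_k}$ satisfying $\sum_{i \in A_k} \lambda^{(k)}_i = k-d$ and $\sum_{i \in A_k} \lambda^{(k)}_i \mbf{u}^i = \mbf{0}$. The descent from $A_k$ to $A_{k-1}$ is exactly your vertex computation applied to the polytope $\{\mu \in [0,1]^{A_k} : \sum_i \mu_i = k-1-d,\ \sum_i \mu_i \mbf{u}^i = \mbf{0}\}$ (nonempty by rescaling $\lambda^{(k)}$): a vertex has at most $d+1$ fractional coordinates, and a short count shows some coordinate is $0$, which is the index you delete. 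The payoff is that every partial sum of the resulting ordering (list $A_d$ first, then the deleted elements in reverse order of deletion) is literally $\sum_{i \in A_k} \mbf{u}^i = \sum_{i \in A_k} (1-\lambda^{(k)}_i)\mbf{u}^i$, whose norm is at most $\sum_{i \in A_k}(1-\lambda^{(k)}_i) = d$. No merging step ever arises, and the constant $d$ comes out exactly. So keep your LP-vertex tool --- it is the right one, and it is also the engine behind the paper's own Theorem~\ref{thVarSteinitz} --- but replace the divide-and-conquer recursion with this one-element-at-a-time peeling.
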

The Steinitz Lemma permutes a single zero-sum sequence $(\mbf{u}^i)_{i=1}^m$ in $\mcf{U}$.
In this paper, we consider permuting multiple sequences $(\mbf{u}^{i}_1)_{i=1}^m, \ldots, (\mbf{u}^{i}_n)_{i=1}^m$ in $\mcf{U}$ whose union $(\mbf{u}^{i}_j)_{i,j}$ is a zero sum sequence.
%\footnote{\color{red}JP:Change double superscript}
%
Theorem~\ref{thmSteinitz} guarantees a permutation $\pi \in \mcf{S}^{nm}$ on $(\mbf{v}^k)_{k=1}^{nm} := (\mbf{u}^{i}_j)_{i,j}$ such that each partial sum $\sum_{i=1}^k \mbf{v}^{\pi(i)}$ has a bounded norm. 
The permuted sequence $(\mbf{v}^{\pi(k)})_{k=1}^{nm}$ may mix the original sequences $(\mbf{u}^{i}_1)_{i=1}^m, \ldots, (\mbf{u}^{i}_n)_{i=1}^m$ arbitrarily.
We are interested in permutations that equally distribute the vectors from the original sequences.
In particular, we are interested in permutations $\pi_1, \ldots, \pi_n \in \mcf{S}^m$ such that
\begin{equation}\label{eqBalancedSum}
\left\|\sum_{i=1}^k \sum_{j=1}^n \mbf{u}^{\pi_j(i)}_j\right\|
\end{equation}
has bounded norm independent of $m$ and $n$ for each $k \in \{1, \ldots, m\}$.
The partial sum in~\eqref{eqBalancedSum} has exactly $k$ elements from each of the original sequences $(\mbf{u}^{i}_1)_{i=1}^m, \ldots, (\mbf{u}^{i}_n)_{i=1}^m$.
Figure~\ref{fig:ColorfulSteinitz} illustrates the type of permutations that we consider.

\begin{center}
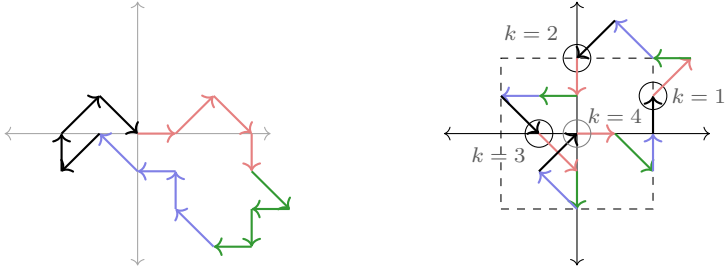
\begin{figure}
\begin{center}
\begin{tabular}{c@{\hskip 2 cm}c}
\begin{tikzpicture}[scale = .5]
\draw[<->, color = black!35](-3.5,0)--(3.5,0);
\draw[<->,color = black!35](0,-3.5)--(0,3.5);
%Color 1red!80!black!50,  

\draw[thick, ->, draw = red!80!black!50](0,0)--(1,0);
\draw[thick, ->, draw = red!80!black!50](1,0)--(2,1);
\draw[thick, ->, draw = red!80!black!50](2,1)--(3,0);
\draw[thick, ->, draw = red!80!black!50](3,0)--(3,-1);

%Color 2%green!50!black!80,
\draw[thick, ->, draw = green!50!black!80](3,-1)--(4,-2);
\draw[thick, ->, draw = green!50!black!80](4,-2)--(3,-2);
\draw[thick, ->, draw = green!50!black!80](3,-2)--(3,-3);
\draw[thick, ->, draw = green!50!black!80](3,-3)--(2,-3);

%Color 3% blue!80!black!50
\draw[thick, ->, draw = blue!80!black!50](2,-3)--(1,-2);
\draw[thick, ->, draw = blue!80!black!50](1,-2)--(1,-1);
\draw[thick, ->, draw = blue!80!black!50](1,-1)--(0,-1);
\draw[thick, ->, draw = blue!80!black!50](0,-1)--(-1,0);

%Color 4
\draw[thick, ->, draw = black](-1,0)--(-2,-1);
\draw[thick, ->, draw = black](-2,-1)--(-2,0);
\draw[thick, ->, draw = black](-2,0)--(-1,1);
\draw[thick, ->, draw = black](-1,1)--(0,0);

\end{tikzpicture}
&
\begin{tikzpicture}[scale = .5]
\draw[<->, color = black](-3.5,0)--(3.5,0);
\draw[<->, color = black](0,-3.5)--(0,3.5);

\draw[dashed, draw = black] (-2,-2)--(2,-2)--(2,2)--(-2,2)--cycle;

\draw[thick, ->, draw = red!80!black!50](0,0)--(1,0);
\draw[thick, ->, draw = green!50!black!80](1,0)--(2,-1);
\draw[thick, ->, draw = blue!80!black!50](2,-1)--(2,0);
\draw[thick, ->, draw = black](2,0)--(2,1);
\draw[draw = black, fill = none] (2,1) circle (3 ex) node[ right = 4]{\color{black!65}\footnotesize $k=1$}; 

\draw[thick, ->, draw = red!80!black!50](2,1)--(3,2);
\draw[thick, ->, draw = green!50!black!80](3,2)--(2,2);
\draw[thick, ->, draw = blue!80!black!50](2,2)--(1,3);
\draw[thick, ->, draw = black](1,3)--(0,2);
\draw[draw = black, fill = none] (0,2) circle (3 ex) node[ above left = 4]{\color{black!65}\footnotesize $k=2$}; 

\draw[thick, ->, draw = red!80!black!50](0,2)--(0,1);
\draw[thick, ->, draw = green!50!black!80](0,1)--(-1,1);
\draw[thick, ->, draw = blue!80!black!50](-1,1)--(-2,1);
\draw[thick, ->, draw = black](-2,1)--(-1,0);
\draw[draw = black, fill = none] (-1,0) circle (3 ex) node[below left = 2]{\color{black!65}\footnotesize $k=3$}; 

\draw[thick, ->, draw = red!80!black!50](-1,0)--(0,-1);
\draw[thick, ->, draw = green!50!black!80](0,-1)--(0,-2);
\draw[thick, ->, draw = blue!80!black!50](0,-2)--(-1,-1);
\draw[thick, ->, draw = black](-1,-1)--(0,0);
\draw[draw = black!50, fill = none] (0,0) circle (3 ex) node[above right = 1]{\color{black!65}\footnotesize $k=4$}; 

\end{tikzpicture}
\end{tabular}
\end{center}
\caption{On the left we have four sequences, each containing four vectors.
Each sequence is drawn in a different color, and their union has zero-sum.
On the right we permute each sequence individually such that the sum of the $4k$ vectors consisting of the first $k$ permuted vectors in each sequence is bounded for each $k \in \{1, 2,3,4\}$; these four sums are highlighted with circles.
This bound is uniform over $k$, and the bounding box is drawn in gray.
}\label{fig:ColorfulSteinitz}
\end{figure}
\end{center}

One can upper bound~\eqref{eqBalancedSum} using the Stenitz Lemma.
For example, the sequence $(\mbf{v}^i)_{i=1}^m$, where $\mbf{v}^i := \sum_{j=1}^n \mbf{u}^{i}_j$, lies in $n \cdot \mcf{U}$.
Applying Theorem~\ref{thmSteinitz} yields a permutation $\pi \in \mcf{S}^m$ such that
\begin{equation}\label{eqTrivialBound}
\left\|\sum_{i=1}^k \mbf{v}^{\pi(i)}\right\|= \left\|\sum_{i=1}^k \sum_{j=1}^n \mbf{u}^{\pi(i)}_j\right\| \le nd
\end{equation}
for all $k \in \{ 1, \ldots, m\}$.
Setting $\pi_1 = \cdots = \pi_n := \pi$ yields the upper bound of $nd$, which depends on the number of sequences $n$. 
Our main result is the existence of $\pi_1, \ldots, \pi_n$ that upper bound~\eqref{thmSteinitz} independently of $n$.

\begin{theorem}[Colorful Steinitz Lemma]\label{thmVarSteinitz}
Let $\|\cdot\|: \mbb{R}^d \to \mbb{R}$ be a norm with unit ball $\mcf{U}$.
Let $(\mbf{u}^{i}_1)_{i=1}^m, \ldots, (\mbf{u}^{i}_n)_{i=1}^m$ be sequences in $\mcf{U}$ whose union $(\mbf{u}^{i}_j)_{i,j}$ is a zero-sum sequence.
There exist permutations $\pi_1,\ldots, \pi_n \in \mathcal{S}^m$ such that 
\[
\left\| \sum_{i=1}^k \sum_{j=1}^n \mbf{u}^{\pi_j(i)}_j \right\| \le \min\left\{nd, 40d^5\right\}
\]
for each $k \in \{ 1, \ldots, m\}$.
\end{theorem}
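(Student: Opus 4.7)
The bound $nd$ follows immediately from~\eqref{eqTrivialBound}, so the content of the theorem is the $n$-independent bound $40d^5$, which is the effective bound when $n \ge 40d^4$. My plan combines a global Steinitz ordering with a local colorful rebalancing step.

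I would first apply Theorem~\ref{thmSteinitz} to the combined zero-sum sequence of $nm$ vectors, obtaining a permutation $\sigma$ whose partial sums $P_t = \sum_{i \le t} \mathbf{v}^{\sigma(i)}$ satisfy $\|P_t\| \le d$. Then I would reformulate the goal in terms of a monotone chain $A_1 \subseteq \dots \subseteq A_m = \{1,\dots,nm\}$ of subsets, where each $A_k$ contains exactly $k$ $\sigma$-positions of each color. Any such chain defines permutations $\pi_j$ by letting $\pi_j(k)$ be the unique color-$j$ index added when passing from $A_{k-1}$ to $A_k$, and the colorful partial sum at round $k$ then coincides with $\sum_{i \in A_k} \mathbf{v}^{\sigma(i)}$. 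Producing such a chain whose partial sums are all bounded in norm by $40d^5$ is therefore equivalent to the theorem.

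Constructing this chain is the heart of the argument. The naive choice, namely putting the first $k$ $\sigma$-occurrences of each color into $A_k$, fails: the resulting set differs from a genuine $\sigma$-prefix $P_{\tau_k}$ by up to $\tau_k - nk$ ``late'' vectors, where $\tau_k$ is the $\sigma$-position at which the slowest color reaches count $k$, and this quantity is not controlled by $d$. Instead, I would process $\sigma$ in coarse blocks of size polynomial in $d$, each of which has $O(d)$-bounded sum by the global Steinitz guarantee, and within each block carry out a colorful rebalancing: extract a rainbow subset of $\mathrm{poly}(d)$-bounded sum containing one vector per color. Concatenating the rebalanced rainbow subsets across blocks would produce the chain $A_1 \subseteq \dots \subseteq A_m$, and a second application of Theorem~\ref{thmSteinitz} to the sequence of rebalanced block sums would provide the final aggregate bound.

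The main obstacle is the rainbow selection step, and it is from this step that I expect the factor $d^5$ to emerge. Producing balanced colorful subsets of small sum that jointly form a monotone chain is strictly harder than either a single application of Theorem~\ref{thmSteinitz} or a B\'ar\'any-style colorful Carath\'eodory statement: one must simultaneously invoke the combined zero-sum structure, respect the chain condition, and enforce per-color counts. I would attack the rainbow selection via a $d$-dimensional exchange argument in the spirit of Grinberg and Sevastyanov, iterated at each of the outer, block, and chain levels, with a $\mathrm{poly}(d)$ loss at each level. The explicit constant $40$ would come from careful bookkeeping of these losses and of the triangle-inequality blow-up incurred when passing from $A_k$-sums to colorful partial sums.
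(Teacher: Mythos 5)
Your reduction to a monotone chain $A_1 \subseteq \cdots \subseteq A_m$ containing exactly $k$ elements of each color, and your final step of applying Theorem~\ref{thmSteinitz} once more to the $m$ transversal sums, both match the skeleton of a correct proof. But the heart of the matter --- producing, for each $i$, a transversal (one vector of each of the $n$ colors) whose sum has norm bounded by a polynomial in $d$ alone --- is exactly what your proposal defers to an unspecified ``$d$-dimensional exchange argument,'' and the block structure you propose to set it up cannot work as stated. In the only regime where the $40d^5$ bound matters, namely $n > 40d^4$, a block of the global Steinitz ordering of size polynomial in $d$ cannot contain one vector of each of the $n$ colors, so no rainbow subset with one vector per color can be extracted from it. If you instead take blocks of size $n$, there is no guarantee that such a block meets every color, and the global prefix bound $\|P_t\| \le d$ gives no control whatsoever over the color distribution inside a block. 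So the rainbow selection is not bookkeeping to be absorbed into the constant $40$; it is the genuinely new content of the theorem, and the proposal contains no argument for it.

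For comparison, the paper does not route through a global Steinitz ordering of all $nm$ vectors at all. It chooses permutations $\sigma_1, \ldots, \sigma_n$ minimizing the maximum transversal norm $\max_{i} \left\| \sum_{j=1}^n \mbf{u}^{\sigma_j(i)}_j \right\|$ (with a secondary minimization of the number of maximizers) and derives a contradiction if this maximum exceeds $(d+1)^2(4d(d+1)+2)$: Carath\'eodory's theorem supplies $d+1$ transversals whose convex hull contains the origin, hence whose centroid is short relative to the maximum, and an application of Theorem~\ref{thmSteinitz} in dimension $d(d+1)$ to the $n$ colors lets one re-mix those $d+1$ transversals into new ones, each close to the centroid, strictly improving the objective. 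Some extremal or exchange argument of this kind is the missing ingredient your plan would need before the rest of it can be assembled.
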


One may consider Theorem~\ref{thmVarSteinitz} as a variation of the classical Steinitz Lemma where we require the permutation $\pi$ to be from a significantly smaller subset  of $\mcf{S}^{mn}$.
Alternatively, it may be viewed as a colorful variation of the Steinitz Lemma that is independent of the number of colors.
The term `colorful' is borrowed from other named results that generalize classic results  in discrete geometry from one to multiple sets, e.g., the colorful Carath\'{e}odory Theorem and the colorful Helly Theorem~\cite{ADS2017,B1982}.
Chen et al. derive a colorful version of the Steinitz Lemma~\cite[Lemma 9]{CCZ2021}, but unlike Theorem~\ref{thmVarSteinitz} their upper bound depends on the number of sequences. 

The following result, which is a direct corollary of Theorem~\ref{thmVarSteinitz}, considers multiple sequences whose union does not necessarily have zero-sum.
\begin{corollary}\label{corAffineVarSteinitz}
Let $\|\cdot\|: \mbb{R}^d \to \mbb{R}$ be a norm with unit ball $\mcf{U}$.
Let $(\mbf{u}^{i}_1)_{i=1}^m, \ldots,$ $(\mbf{u}^{i}_n)_{i=1}^m$ be sequences in $\mcf{U}$.
There exist permutations $\pi_1, \ldots, \pi_n \in \mathcal{S}^m$ such that
\[
\left\| \sum_{i=1}^k \sum_{j=1}^n \mbf{u}^{\pi_j(i)}_j  - \frac{k}{m} \cdot \sum_{i=1}^m \sum_{j=1}^n \mbf{u}^{i}_j\right\| %\le d^2(d+1)(4d(d+1)+1) 
\le \min\left\{nd, 40d^5\right\}
\]
for each $k \in\{1, \ldots, m\}$. 
\end{corollary}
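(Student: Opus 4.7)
The plan is to deduce Corollary \ref{corAffineVarSteinitz} from Theorem \ref{thmVarSteinitz} by a standard centering step: the subtracted term $\frac{k}{m}\sum_{i,j}\mbf{u}^i_j$ is exactly what one expects if every vector is shifted by the grand mean, and this shift turns the family into a zero-sum sequence to which the Colorful Steinitz Lemma directly applies.

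First I would set $\bar{\mbf{u}} := \frac{1}{mn}\sum_{i=1}^m\sum_{j=1}^n \mbf{u}^i_j$ and $\tilde{\mbf{u}}^i_j := \mbf{u}^i_j - \bar{\mbf{u}}$. The union $(\tilde{\mbf{u}}^i_j)_{i,j}$ is then zero-sum by construction, and convexity of $\mcf{U}$ together with the triangle inequality yields $\|\tilde{\mbf{u}}^i_j\| \le 2$. Hence $\tfrac{1}{2}\tilde{\mbf{u}}^i_j \in \mcf{U}$, so I may apply Theorem \ref{thmVarSteinitz} to the $n$ zero-sum sequences $\bigl(\tfrac{1}{2}\tilde{\mbf{u}}^i_j\bigr)_{i=1}^m$ to obtain permutations $\pi_1,\ldots,\pi_n \in \mcf{S}^m$ with
\[
\left\|\sum_{i=1}^k\sum_{j=1}^n \tfrac{1}{2}\tilde{\mbf{u}}^{\pi_j(i)}_j\right\| \le \min\{nd,\,40d^5\}
\]
for every $k \in \{1,\ldots,m\}$.

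Finally, I would undo the shift using the identity $\sum_{i=1}^k\sum_{j=1}^n \bar{\mbf{u}} = kn\,\bar{\mbf{u}} = \frac{k}{m}\sum_{i=1}^m\sum_{j=1}^n \mbf{u}^i_j$, which converts the estimate above into the inequality claimed by the corollary, up to a factor of $2$ on the right-hand side. This factor arises solely from the doubling of the effective radius of the unit ball after centering and can be absorbed into the constants (for instance by replacing $40d^5$ with $80d^5$, or by viewing the $nd$ slot as the trivial estimate \eqref{eqTrivialBound} applied to the shifted sequences). There is no genuine obstacle beyond this bookkeeping around the centering trick.
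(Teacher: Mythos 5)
Your centering reduction is the natural way to read the paper's claim that this is a ``direct corollary'' of Theorem~\ref{thmVarSteinitz} (the paper gives no proof of it), and the identity $kn\,\bar{\mbf{u}}=\frac{k}{m}\sum_{i,j}\mbf{u}^i_j$ is exactly the right bookkeeping. The genuine gap is the factor of $2$, which you flag but dismiss too quickly: it cannot be absorbed in the $nd$ branch. The shifted vectors $\tilde{\mbf{u}}^i_j$ lie only in $2\mcf{U}$, so both the trivial estimate~\eqref{eqTrivialBound} and Theorem~\ref{thmVarSteinitz} applied to $\tfrac{1}{2}\tilde{\mbf{u}}^i_j$ return $2nd$, not $nd$; and for $d=1$ the proof of Theorem~\ref{thmVarSteinitz} delivers exactly $d(d+1)^2(4d(d+1)+2)=40=40d^5$, so doubling overshoots that branch as well. (For $d\ge 2$ the slack in the constant does absorb the doubling, since $2d(d+1)^2(4d(d+1)+2)\le 40d^5$ there.) What your argument actually establishes is the corollary with right-hand side $\min\{2nd,\,40d^5\}$ for $d\ge 2$, which suffices for every use the paper makes of the corollary but is not the literal statement.

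If you want the $40d^5$ branch with no loss at all, a cleaner reduction is to augment the family with $n$ additional \emph{constant} sequences, the $(n+j)$-th consisting of $m$ copies of $-\frac{1}{m}\sum_{i=1}^m \mbf{u}^i_j$. Each such vector is (up to sign) an average of points of $\mcf{U}$ and hence lies in $\mcf{U}$, the union of all $2n$ sequences is zero-sum, and a constant sequence contributes exactly $-\frac{k}{m}\sum_{i=1}^m \mbf{u}^i_j$ to every $k$-th partial sum no matter how it is permuted. Applying Theorem~\ref{thmVarSteinitz} to these $2n$ sequences yields the affine bound $40d^5$ verbatim, with only the other branch degrading to $2nd$ because there are now $2n$ colors. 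Neither route recovers the literal $nd$; that branch seems to carry an unavoidable factor of $2$ in the affine setting, and you should say so rather than suggest it can be recovered from~\eqref{eqTrivialBound}.
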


Theorems~\ref{thmSteinitz} and~\ref{thmVarSteinitz} require $m$ different partial sums to have bounded norm. 
The upper bounds in those results can be improved if we only care for a single partial sum to have bounded norm. 
Ambrus et al.~\cite{ABG2016} bound a single partial sum in the setting of the classic Steinitz Lemma.
Our next theorem extends Ambrus et al.'s result to the colorful setting.
\begin{theorem}\label{thVarSteinitz}
Let $\|\cdot\|: \mbb{R}^d \to \mbb{R}$ be a norm with unit ball $\mcf{U}$.
Let $(\mbf{u}^{i}_1)_{i=1}^m, \ldots,$ $(\mbf{u}^{i}_n)_{i=1}^m$ be sequences in $\mcf{U}$ whose union $(\mbf{u}^{i}_j)_{i,j}$ is a zero-sum sequence.
For each $k \in \{1,\ldots,m\}$, there exist $I_1, \ldots, I_n\subseteq\{1,\ldots,m\}$ such that $|I_1| = \cdots = |I_n| =k$ and 
\[
\left\| \sum_{j=1}^n\sum_{i \in I_j}  \mbf{u}^i_j \right\| \le d.
\]
\end{theorem}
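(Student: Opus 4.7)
The plan is to adapt Ambrus et al.'s polytope-based proof of the classical single-partial-sum result to the colorful setting. Let $\Delta(m,k) := \{x \in [0,1]^m : \sum_{i=1}^m x_i = k\}$ denote the $(m,k)$-hypersimplex, consider the product polytope $P := \prod_{j=1}^n \Delta(m,k)$, and define the linear map $L : \mbb{R}^{n \times m} \to \mbb{R}^d$ by $L(y) := \sum_{j,i} y_{j,i} \mbf{u}^i_j$. The $\{0,1\}$-vertices of $P$ are exactly the colorful selections $(I_1, \ldots, I_n)$ with $|I_j| = k$, so the goal reduces to finding an integer point $v^\star \in P$ with $\|L(v^\star)\| \le d$. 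The uniform fractional point $y^\star \in P$ with $y^\star_{j,i} = k/m$ satisfies $L(y^\star) = (k/m)\sum_{j,i} \mbf{u}^i_j = \mbf{0}$ by the zero-sum hypothesis, so the fiber $Q := P \cap L^{-1}(\mbf{0})$ is non-empty.

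Next, I would select an extreme point $v$ of $Q$. A standard vertex-dimension count---using the $n$ color-sum equalities $\sum_i y_{j,i} = k$ together with the $d$ equalities $L(y) = \mbf{0}$---shows that at most $n + d$ coordinates of $v$ lie strictly in $(0,1)$. Let $S_j := \{i : v_{j,i} \in (0,1)\}$ be the fractional support in color $j$: integrality of $\sum_i v_{j,i} = k$ and of the remaining entries forces $|S_j| \in \{0\} \cup \{2,3,\ldots\}$ for each $j$, with the fractional entries of color $j$ summing to an integer $t_j \in \{1, \ldots, |S_j|-1\}$.

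Round $v$ to an integer vertex $v^\star \in P$ by choosing, in each color $j$, which $t_j$ entries of $S_j$ to set to $1$. Because $L(v) = \mbf{0}$, the rounded point satisfies $L(v^\star) = \sum_j E_j$ with $E_j := \sum_{i \in S_j}(v^\star_{j,i} - v_{j,i})\mbf{u}^i_j$, so the task is to choose the roundings to control $\|\sum_j E_j\|$.

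The main obstacle is establishing the rounding bound $\|\sum_j E_j\| \le d$. A per-color triangle inequality only yields $\|\sum_j E_j\| \le \sum_j |S_j| \le n + d$, which grows with the number of sequences. To attain the dimension-only bound $d$, the roundings must be coordinated across colors so that per-color errors cancel. I would recast the rounding task as an auxiliary colorful subset-selection problem on the restricted sequences $(\mbf{u}^i_j)_{i \in S_j}$, whose fractional relaxation already lands on the target point $\sum_{j,\,i \in S_j} v_{j,i} \mbf{u}^i_j$, and close the argument either by inducting on the total fractional support $\sum_j |S_j|$ or by invoking Ambrus et al.'s classical bound on the combined sequence indexed by $\bigsqcup_j S_j$. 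Ensuring that this reduction preserves the dimension-only bound $d$, rather than accumulating an $n$-dependent error across colors, is the crux.
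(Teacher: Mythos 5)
Your setup is exactly the paper's: the product of hypersimplices, the fiber over $\mbf{0}$ of the map $L$, a vertex of that fiber, and per-color rounding. But there is a genuine gap at precisely the point you flag as the crux, and the paper closes it not by coordinating roundings across colors but by a sharper count of the fractional support of the vertex. Your count of $n+d$ fractional coordinates is the naive one (number of equality constraints) and is indeed useless here. The paper instead shows that a vertex $v$ of $Q$ has at most $2d$ fractional coordinates \emph{in total}, independent of $n$. The argument: for each color $j$ with fractional support $S_j$ (so $|S_j|\ge 2$ whenever nonempty), the $|S_j|-1$ ``swap'' directions $e_{j,i}-e_{j,i'}$ supported on consecutive elements of $S_j$ automatically preserve all $n$ color-sum constraints $\sum_i y_{j,i}=k$ and keep you inside the box (for small $\epsilon$, since they only touch strictly fractional entries). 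Hence the only constraints that can obstruct them are the $d$ equalities $L(y)=\mbf{0}$. If $\sum_j(|S_j|-1)>d$, the images of these swap directions under $L$ are linearly dependent in $\mbb{R}^d$, so some nonzero combination $\mbf{p}$ lies in $\ker L$ and is supported on fractional entries, whence $v\pm\epsilon\mbf{p}\in Q$, contradicting that $v$ is a vertex. Therefore $\sum_j(|S_j|-1)\le d$; combined with $|S_j|\ge 2$ for each of the $\ell$ fractional colors this gives $\ell\le d$ and $\sum_j|S_j|\le d+\ell\le 2d$.

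With that count, no cross-color cancellation is needed: rounding each fractional color independently (sort the fractional entries of color $j$ and set the largest $t_j$ to one) incurs $\ell_1$-error at most $|S_j|/2$ per color, hence norm error at most $\sum_j|S_j|/2\le d$ by the triangle inequality, since each $\mbf{u}^i_j\in\mcf{U}$. Your two proposed escape routes do not obviously substitute for this. Invoking the classical Ambrus et al.\ bound on the combined sequence indexed by $\bigsqcup_j S_j$ discards the per-color cardinality constraints $|I_j|=k$, which is the whole point of the colorful statement; and inducting on the total fractional support stalls, because a vertex of the auxiliary fiber polytope can have just as large a fractional support as the one you started with unless you already have the $n$-independent bound. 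So the missing ingredient is not a cleverer rounding but the observation that intra-color swaps see only the $d$ constraints from $L$, which caps the total fractional support at $2d$.
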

%

%{\color{blue}(Timm--- I think we can easily extend the above theorem to the following result (this was a remark by Martin Henk)):
%%
%\begin{theorem}\label{thVarSteinitz}
%%
%{\color{blue}Let $\|\cdot\|: \mbb{R}^d \to \mbb{R}$ be a norm with unit ball $\mcf{U}$.}
%%
%Let $(\mbf{u}^{i}_1)_{i=1}^{m_1}, \ldots,$ $(\mbf{u}^{i}_n)_{i=1}^{m_n}$ be sequences in $\mcf{U}$ whose union $(\mbf{u}^{i}_j)_{i,j}$ is a zero-sum sequence.
%%
%For each $(k_1,\ldots,k_n) \in \{1,\ldots,m_1\}\times\cdots\times\{1,\ldots,m_n\}$, there exist $I_j\subseteq\{1,\ldots,m_j\}$ such that $|I_j| = k_j$ for all $j=1,\ldots,n$ and 
%%
%\[
%\left\| \sum_{j=1}^n\sum_{i \in I_j}  \mbf{u}^i_j \right\| \le d.
%\]
%%
%\end{theorem}
%%
%
%}

The question of bounding a single partial sum bears similarities to another variation of the Steinitz Lemma in which one permutes a (not necessarily zero-sum) sequence such that some partial sum lies in $\mcf{U}$~\cite{BMMP2012,DFG2012}.

We prove Theorems~\ref{thmVarSteinitz} and~\ref{thVarSteinitz} in Section~\ref{secSteinitz}.
%

%%%%%%%%%%%%%%%%%%%%%%%%
%%%%%%%%%%%%%%%%%%%%%%%%
\subsection{An Application of Theorem~\ref{thmVarSteinitz} to Block Integer Programs.}\

Following the work of Eisenbrand and Weismantel~\cite{EW2018}, the Steinitz Lemma has been used in numerous projects; we point to~\cite{CKXS2019,CEHRW2020,EHK2018,EHKKLO2019,JR2018,K2020} just to name a few.
One area that benefits from the Steinitz Lemma is the study of integer programs with special sparse block-structures. 
%
%{\color{red}\sout{We refer to~\mbox{\cite{CKXS2019,EHKKLO2019,HKW2010}} for more on how the special sparsity structure of block-structured matrices can be used to derive faster algorithms for integer programs.}}
%
Block-structured integer programs can be applied in various problems such as scheduling and social choice; see, e.g.,~\cite{JKMR2021,KKM2020,SSV1996}.

A particular family of block-structured integer programs is defined by the {\it 4-block matrix}

\begin{equation}\label{eq3BlockMatrix}
\mbf{H} := 
\left[
\begin{array}{c|c@{\hskip .15 cm}c@{\hskip .15 cm}c@{\hskip .15 cm}c}
\mbf{A}^0 & \mbf{C}^1 & \cdots & \mbf{C}^n\\
 \hline &\\[-.35 cm]
 \mbf{B}^1 & \mbf{A}^1 \\
 \vdots & & \ddots\\
 \mbf{B}^n & &&\mbf{A}^n
\end{array}
\right] \in \mbb{Z}^{(s_0+ns) \times (t_0+nt)},
\end{equation}
where $\mbf{B}^i \in \mbb{Z}^{s \times t_0}$, $\mbf{A}^i \in \mbb{Z}^{s \times t}$, $\mbf{C}^i \in \mbb{Z}^{s_0 \times t}$ for each $i \in \{ 1, \ldots, n\}$, and $\mbf{A}^0 \in \mbb{Z}^{s_0\times t_0}$.
Our results hold if $s$ and $t$ depend on $i$, but we omit this dependence for the sake of presentation.
Given a righthand side $\mbf{b} = (\mbf{b}^0, \mbf{b}^1, \ldots, \mbf{b}^n)$\footnote{It will be helpful to write column vectors inline. For $\mbf{d}^1\in \mbb{R}^d$ and $\mbf{d}^2 \in \mbb{R}^{d'}$, we use $ (\mbf{d}^1, \mbf{d}^2)$ to denote the column vector $[(\mbf{d}^1)^\top, (\mbf{d}^2)^\top]^\top$.} with $\mbf{b}^0 \in \mbb{Z}^{s_0}$ and $\mbf{b}^1, \ldots, \mbf{b}^n \in \mbb{Z}^{s}$, an objective vector $(\mbf{c}^{\mbf{x}}, \mbf{c}^{\mbf{y}})  \in \mbb{R}^{t_0} \times \mbb{R}^{nt}$, and upper bounds $\mbf{u}^{\mbf{x}} \in (\mbb{Z}_+ \cup \{\infty\})^{t_0}$ and $\mbf{u}^{\mbf{y}} \in (\mbb{Z}_+ \cup \{\infty\})^{nt}$, the {\it 4-block integer program} is
\begin{equation}\label{eqMainProb}
\max \left\{ \mbf{c}^{\mbf{x}} \cdot \mbf{x} + \mbf{c}^{\mbf{y}} \cdot \mbf{y} :\ 
\left[\begin{array}{c}\mbf{x}\\ \mbf{y}\end{array}\right] \in \mbb{Z}^{t_0}_+ \times \mbb{Z}^{nt}_+,\
\mbf{H} \left[\begin{array}{c}\mbf{x}\\ \mbf{y}\end{array}\right] = \mbf{b},\
\left[\begin{array}{c}\mbf{x}\\ \mbf{y}\end{array}\right]\le\left[\begin{array}{c}\mbf{u}^{\mbf{x}}\\ \mbf{u}^{\mbf{y}}\end{array}\right]
%
%\begin{array}{l@{\hskip 0.5 cm}l}
%\left[\begin{array}{c}\mbf{x}\\ \mbf{y}\end{array}\right] \in \mbb{Z}^{t_0}_+ \times \mbb{Z}^{nt}_+,
%%
%&
%\mbf{H} \left[\begin{array}{c}\mbf{x}\\ \mbf{y}\end{array}\right] = \mbf{b}
%\\[.5 cm]
%\mbf{x} \le \mbf{u}^{\mbf{x}},~\mbf{y} \le \mbf{u}^{\mbf{y}},
%\end{array}
 \right\}.
\end{equation}
The family of $4$-block integer programs generalize $n$-fold integer programs, which occur when $t_0 =0$, and $2$-stage stochastic integer programs, which occur when $s_0 =0 $; see~\cite{CEHRW2020,HDOW2008,HOR2013,KKM2020b} and~\cite{K2020,KLO2018}.
In our results, we assume $t_0, s_0 \ge 1$.

We apply the colorful Steinitz Lemma to study orthant-compatible vectors in $\ker\mbf{H}$. 
For vectors $\mbf{x} = (x_i), \mbf{y} = (y_i) \in \mbb{R}^d$, we write $\mbf{x} \sqsubseteq \mbf{y}$ if $|x_i| \le |y_i|$ and $|x_i| \cdot |y_i| \ge 0$ for each $i \in \{1, \dotsc, d\}$. 
Orthant-compatible vectors in $\ker\mbf{H}$ appear in the study of Graver bases and proximity.

The {\it Graver basis} of $\mbf{H}$ is the set of all nonzero vectors $\mbf{z} \in \mbb{Z}^{t_0+nt} \cap \ker \mbf{H}$ such that there does not exist a different nonzero vector $\mbf{u} \in \mbb{Z}^{t_0+nt} \cap \ker \mbf{H}$ satisfying $\mbf{u} \sqsubseteq \mbf{z}$.
Graver bases can be used to solve block-structured integer programs~\cite{CKXS2019,EHKKLO2019,HDOW2008,HKW2010,K2020}.

The {\it proximity} problem can be stated as follows:
Given a norm $\|\cdot\|$ and an optimal vertex solution $(\widehat{\mbf{x}}, \widehat{\mbf{y}})$ to the linear relaxation of~\eqref{eqMainProb}, upper bound the distance $\|(\widehat{\mbf{x}}, \widehat{\mbf{y}}) - (\overline{\mbf{x}}, \overline{\mbf{y}})\|$ to the nearest optimal integer solution $(\overline{\mbf{x}}, \overline{\mbf{y}})$ (if any exist). 
Proximity results are used in the analysis of integer programming algorithms, including to limit the state space of dynamic programs and to bound the integrality gap.
Many proximity proofs rely on the following fact: there does not exist a nonzero vector $(\mbf{u}, \mbf{v}) \in (\mbb{Z}^{t_0}\times \mbb{Z}^{nt}) \cap \ker \mbf{H}$ satisfying $(\mbf{u}, \mbf{v}) \sqsubseteq (\widehat{\mbf{x}} - \overline{\mbf{x}}, \widehat{\mbf{y}} - \overline{\mbf{y}})$ unless $(\widehat{\mbf{x}} - \overline{\mbf{x}}, \widehat{\mbf{y}} - \overline{\mbf{y}})$ is already integer-valued.
For more on this fact, see the discussion of {\it cycles} in~\cite{EW2018}.

Graver bases and the proximity problem consider vectors $\mbf{g} \in \ker\mbf{H}$ that have no nonzero integer vectors $\mbf{h} \in \ker\mbf{H}$ satisfying $\mbf{h} \sqsubseteq \mbf{g}$.
We apply the colorful Steinitz Lemma in Theorem~\ref{thm4BlockVector} to bound the size of such vectors $\mbf{g}$.
When analyzing block-structured integer programs, one is often interested in how complex the problem is as $n$ grows large. 
In this framework, the variables $s,t,s_0,$ and $t_0$ are commonly considered fixed values. 
Also, the {\it largest absolute entry $\Delta$} in the constraint matrix~\eqref{eq3BlockMatrix} is considered fixed.
We adopt this fixed parameter convention; for a function $f(n,s,t,s_0,t_0,\Delta)$ we write $f(n,s,t,s_0,t_0,\Delta)\in \fpt(n^\alpha)$ if $f$ can be upper bounded by a function $n^\alpha \cdot  g(s,t,s_0,t_0,\Delta)$.

\begin{theorem}\label{thm4BlockVector}
There exists a number $\xi\in \fpt(n^{\min \{t_0+2, s_0\}})$ such that the following holds: 
For each $(\widehat{\mbf{x}}, \widehat{\mbf{y}}) \in (\mbb{R}_+^{t_0} \times \mbb{R}_+^{nt})\cap  \ker \mbf{H}$ such that $\|(\widehat{\mbf{x}},\widehat{\mbf{y}})\|_{\infty} > \xi$, there exists a nonzero $({\mbf{x}}, {\mbf{y}}) \in (\mbb{Z}^{t_0}_+ \times \mbb{Z}^{nt}_+) \cap  \ker \mbf{H}$ such that $ ({\mbf{x}}, {\mbf{y}}) \le (\widehat{\mbf{x}}, \widehat{\mbf{y}})$.
\end{theorem}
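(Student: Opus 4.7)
The plan is to unroll $(\widehat{\mbf{x}}, \widehat{\mbf{y}})$ as a zero-sum multiset of columns of $\mbf{H}$ and permute it via the colorful Steinitz Lemma so that partial sums restricted to the top $s_0$ rows are controlled independently of $n$; a pigeonhole argument then extracts a nonzero integer kernel vector dominated by $(\widehat{\mbf{x}}, \widehat{\mbf{y}})$.

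After reducing to the case $(\widehat{\mbf{x}}, \widehat{\mbf{y}}) \in \mbb{Z}_+^{t_0+nt}$ by a scaling/rounding step (the bound dominates fractional roundoff), I would form the multiset $\mcf{M}$ consisting of each column of $\mbf{H}$ repeated according to its coordinate in $(\widehat{\mbf{x}}, \widehat{\mbf{y}})$. The kernel equation forces $\mcf{M}$ to be zero-sum in $\mbb{R}^{s_0+ns}$, with every element of $\ell_\infty$-norm at most $\Delta$. Partition $\mcf{M}$ into $n+1$ color classes according to the block structure: color $0$ collects the $\mbf{x}$-columns and color $j \in \{1,\ldots,n\}$ collects the $\mbf{y}^j$-columns, padding shorter classes with zero vectors to a common length $m$.

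I would then project the columns onto the top $s_0$ rows of $\mbf{H}$ and apply Theorem~\ref{thmVarSteinitz} in $\mbb{R}^{s_0}$ (scaled by $\Delta$). Since $\mbf{A}^0 \widehat{\mbf{x}} + \sum_j \mbf{C}^j \widehat{\mbf{y}}^j = \mbf{0}$, the projected union is zero-sum, yielding permutations $\pi_0,\ldots,\pi_n$ for which every color-balanced partial sum in the top rows has $\ell_\infty$-norm at most $\min\{(n+1)s_0,\,40 s_0^5\} \cdot \Delta$. The two terms inside the bound $\xi \in \fpt(n^{\min\{t_0+2,s_0\}})$ would then arise from two pigeonhole variants applied to these permuted sequences. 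The $n^{s_0}$ branch uses the trivial Steinitz bound $(n+1)s_0\Delta$, confining the integer top partial sum to a box of size $\fpt(n^{s_0})$, so once $m$ exceeds this threshold a matched pair of positions produces an interval with zero top sum. The $n^{t_0+2}$ branch uses the $n$-independent bound $40 s_0^5 \Delta$ together with auxiliary tracking of the cumulative usage of each of the $t_0$ types of $\mbf{x}$-columns (a priori bounded by $\xi$), giving a state space that balances to $\fpt(n^{t_0+2})$ once $\xi$ is chosen consistently.

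The main obstacle will be promoting a matched pair of positions into a genuine nonzero integer kernel vector of $\mbf{H}$ rather than a vector vanishing only in the top $s_0$ rows. Here the 4-block structure is decisive: each side block $j$ is touched only by $\mbf{x}$- and $\mbf{y}^j$-columns, and colorful Steinitz guarantees equal counts of color-$j$ columns between any two positions, so a pigeonhole state that tracks the $\mbf{x}$-usage pattern forces matching in $\mbf{B}^j \mbf{u}$ and, coupled with the bounded top partial sum, forces the residual $\mbf{A}^j \mbf{v}^j$ to vanish for every $j$ simultaneously. Orchestrating this coupling across all $n$ side blocks while keeping the total state space polynomial in $n$ is the technical heart of the argument and is precisely what makes the colorful (rather than classical) Steinitz Lemma indispensable for producing a threshold $\xi$ of the form $\fpt(n^{\min\{t_0+2, s_0\}})$.
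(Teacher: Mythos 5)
Your proposal correctly identifies the shape of the final step (confine partial sums of the $s_0$ linking rows to a region of size $\fpt(n^{\min\{t_0+2,s_0\}})$ and pigeonhole), but the route to it has a gap that you yourself flag and then do not close. Unrolling $(\widehat{\mbf{x}},\widehat{\mbf{y}})$ into raw columns of $\mbf{H}$ and permuting them so that the \emph{top} $s_0$ rows of every partial sum are bounded gives you, after pigeonholing, an interval of columns whose sum vanishes only in the linking constraints. For the resulting vector to lie in $\ker\mbf{H}$ you also need $\mbf{B}^i\mbf{u}+\mbf{A}^i\mbf{v}^i=\mbf{0}$ for every block $i$, i.e.\ $ns$ further constraints. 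Matching the usage counts of the $t_0$ types of $\mbf{x}$-columns (a state space of size $\fpt(n^{t_0})$, which is fine) fixes $\mbf{B}^i\mbf{u}$, but nothing in the colorful Steinitz output controls the residuals $\mbf{A}^i\mbf{v}^i$; tracking them in the pigeonhole state would cost $(\text{const})^{ns}$, not a polynomial in $n$. Equal color counts between two positions do not force these residuals to vanish, so the claimed "coupling across all $n$ side blocks" does not go through as stated. A secondary issue: the reduction to integer targets "by scaling/rounding" is not available here — rounding destroys membership in $\ker\mbf{H}$, and the theorem is deliberately stated for fractional $(\widehat{\mbf{x}},\widehat{\mbf{y}})$ precisely so that it yields both proximity and Graver bounds.

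The paper avoids the block-constraint problem by never permuting raw columns. It first decomposes $(\widehat{\mbf{x}},\widehat{\mbf{y}})$ into pieces that \emph{already} satisfy the lower-block equations: integer vectors $\widehat{\mbf{u}}_j\in\ker\mbf{A}$ (a Graver-type decomposition in each block), a Carath\'eodory decomposition $\widehat{\mbf{x}}=\sum_\ell\lambda_\ell\mbf{h}^\ell$ into extreme rays, and integer pieces $\mbf{v}_{\ell,j}$ with $(\mbf{h}^\ell,\mbf{v}_{\ell,j})\in\ker[\mbf{B}\ \mbf{A}]$, plus small fractional remainders that absorb the non-integrality. Only the $s_0$ linking rows then remain to be controlled, and the pigeonhole is over integer points in a $\dim(\mcf{V})\le\min\{t_0+2,s_0\}$-dimensional slab. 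Crucially, the colorful Steinitz Lemma enters at a different place than in your plan: within each ray $\ell$, the $n$ colors are the $n$ blocks and the sequences are $(\mbf{C}^i\mbf{v}^i_{\ell,j})_j$; the lemma chooses how to assemble one sub-piece per block into each $\mbf{v}_{\ell,j}$ so that the partial sums of $\mbf{C}\mbf{v}_{\ell,j}$ deviate from their average by $\fpt(1)$ rather than $\fpt(n)$. If you want to salvage your approach, you need an analogue of this pre-packaging step; without it the lower $ns$ constraints are fatal.
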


We emphasize that $(\widehat{\mbf{x}}, \widehat{\mbf{y}})$ in the statement of Theorem~\ref{thm4BlockVector} is not necessarily integer-valued.
The assumption $(\widehat{\mbf{x}}, \widehat{\mbf{y}}) \in \mbb{R}_+^{t_0} \times \mbb{R}_+^{nt}$ in Theorem~\ref{thm4BlockVector} (as opposed to $(\widehat{\mbf{x}}, \widehat{\mbf{y}})$ living in some other orthant of $\mbb{R}^{t_0} \times \mbb{R}^{nt}$) is made without loss of generality by multiplying columns of $\mbf{H}$ by $-1$.
One value of $\xi$ that satisfies Theorem~\ref{thm4BlockVector} is defined in~\eqref{eqLargeConstant}.
Theorem 2 in~\cite{CKXS2019} provides a lower bound example illustrating that Theorem~\ref{thm4BlockVector} is nearly optimal in this $\fpt$ framework.
Their example has $t_0, s_0 \in O(k)$ for some $k \in \mbb{Z}_+$  and $\|\widehat{\mbf{x}}\|_{\infty}\in \Omega(n^{k})$.

We use Theorem~\ref{thm4BlockVector} to derive new bounds on Graver basis elements for $4$-block integer programs.
For $n$-fold matrices, a Graver basis element $\mbf{g}$ satisfies $\|\mbf{g}\|_1 \in \mcf{O}(s_0s\Delta)^{(s_0+1)(s+1)}$~\cite[ii) on page 49:4]{EHK2018}.
For $2$-stage stochastic matrices, a Graver basis element $\mbf{g}$ satisfies $\|\mbf{g}\|_{\infty} \le (st_0\Delta)^{\mcf{O}(st_0(2s\Delta+1)^{st_0^2})}$~\cite[Theorem 2]{K2020}.
For the $4$-block matrix $\mbf{H}$, Chen et al. prove that a Graver basis element $\mbf{g}$ satisfies $\|\mbf{g}\|_{\infty} \in \fpt(n^{s_0})$~\cite[Theorem 1]{CKXS2019} and $\|\mbf{g}\|_{\infty} \in \fpt(n^{t^2+1})$ if $\mbf{A}^0 = \mbf{0}$~\cite[Theorem 5]{CKXS2019}.
We use Theorem~\ref{thm4BlockVector} to replace $s_0$ in the exponent of Chen et al.'s general bound with $\min \{t_0+2, s_0\}$.
In the $\fpt$ framework, our bound matches the one in~\cite{CKXS2019} and provides an improvement when $t_0$ is bounded.

\begin{corollary}\label{corGraverBasis}
A Graver basis element $\mbf{g}$ of $\mbf{H}$ satisfies 
\[
\|\mbf{g}\|_{\infty} \in \fpt(n^{\min \{t_0+2, s_0\}}).
\]
\end{corollary}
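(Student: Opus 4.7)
The plan is to derive Corollary~\ref{corGraverBasis} as a direct consequence of Theorem~\ref{thm4BlockVector}: if a Graver basis element $\mbf{g}$ of $\mbf{H}$ had infinity norm exceeding the constant $\xi \in \fpt(n^{\min\{t_0+2, s_0\}})$ from that theorem, then I could peel off a strictly smaller nonzero integer kernel vector, contradicting Graver-minimality.

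First I would reduce to the nonnegative orthant. Let $\mbf{D}$ be the diagonal $\pm 1$ matrix for which $\mbf{D}\mbf{g} \ge \mbf{0}$. The matrix $\mbf{H}\mbf{D}$ retains the 4-block form of~\eqref{eq3BlockMatrix} (some columns within each block are negated, but the block layout and the entry bound $\Delta$ are unchanged), so Theorem~\ref{thm4BlockVector} applies to $\mbf{H}\mbf{D}$ with the same $\xi$. The bijection $\mbf{w} \mapsto \mbf{D}\mbf{w}$ preserves the integer lattice and the relation $\sqsubseteq$, and carries $\ker\mbf{H}$ to $\ker(\mbf{H}\mbf{D})$, so $\mbf{D}\mbf{g}$ is a Graver basis element of $\mbf{H}\mbf{D}$; after relabeling I may assume $\mbf{g} \ge \mbf{0}$. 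Suppose now, for contradiction, that $\|\mbf{g}\|_\infty > \xi$. Pick $\epsilon \in (0,\ 1 - \xi/\|\mbf{g}\|_\infty)$ and set $(\widehat{\mbf{x}}, \widehat{\mbf{y}}) := (1-\epsilon)\mbf{g}$; this lies in $(\mbb{R}_+^{t_0} \times \mbb{R}_+^{nt}) \cap \ker\mbf{H}$ with $\|(\widehat{\mbf{x}}, \widehat{\mbf{y}})\|_\infty > \xi$, so Theorem~\ref{thm4BlockVector} supplies a nonzero integer $(\mbf{x}, \mbf{y}) \in (\mbb{Z}_+^{t_0} \times \mbb{Z}_+^{nt}) \cap \ker\mbf{H}$ with $(\mbf{x}, \mbf{y}) \le (1-\epsilon)\mbf{g}$.

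The final step is to observe that this $(\mbf{x}, \mbf{y})$ violates the Graver-minimality of $\mbf{g}$. At every coordinate $i$ with $g_i > 0$, integrality combined with $(1-\epsilon)g_i < g_i$ forces the corresponding entry of $(\mbf{x}, \mbf{y})$ to be at most $g_i - 1 < g_i$; at coordinates with $g_i = 0$ nonnegativity forces it to equal $0$. Thus $(\mbf{x}, \mbf{y})$ is a nonzero integer vector in $\ker\mbf{H}$ that satisfies $(\mbf{x}, \mbf{y}) \sqsubseteq \mbf{g}$ and $(\mbf{x}, \mbf{y}) \ne \mbf{g}$, producing the required contradiction. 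The only subtlety worth flagging is the need to shrink by the factor $1-\epsilon$ before invoking Theorem~\ref{thm4BlockVector}: if one applied the theorem directly to $\mbf{g}$, it could in principle return $\mbf{g}$ itself and yield no useful dominance, so the tiny slack $\epsilon > 0$ is precisely what guarantees strict componentwise dominance on the support of $\mbf{g}$. Beyond this, there is no serious obstacle in the argument.
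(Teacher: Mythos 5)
Your proposal is correct and is exactly the argument the paper intends: Corollary~\ref{corGraverBasis} is left as an immediate consequence of Theorem~\ref{thm4BlockVector}, via the sign normalization the paper itself mentions and a contradiction with Graver-minimality. Your extra care in shrinking $\mbf{g}$ by $1-\epsilon$ to guarantee the returned kernel vector differs from $\mbf{g}$ is a legitimate and correctly handled refinement of that standard step.
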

Chen et al. propose an algorithm for $4$-block integer programs that runs in time $\fpt(n^{O(s_0t_0)})$~\cite[Theorem 3]{CKXS2019} and $\min\{\fpt(n^{\mathcal{O}(s_0t)}), \fpt(n^{\mathcal{O}(t^2t_0)})\}$ if $\mbf{A}^0 = \mbf{0}$~\cite[Theorem 7]{CKXS2019}.
Cslovjecsek et al.~\cite{CEHRW2020} take a different approach to solving $n$-fold integer programs; rather than following an augmentation scheme, they instead solve an appropriate mixed integer relaxation that satisfies a stronger proximity result.
In a similar way, Corollary~\ref{corMain} provides an algorithm for the $4$-block integer program:
\begin{enumerate}[label= Step \alph*,leftmargin=*,align=left]
\item[Step 1.] Compute an optimal solution $(\widehat{\mbf{x}}, \widehat{\mbf{y}}) \in \mbb{R}^{t_0}_+ \times \mbb{R}^{nt}_+$ to the linear relaxation of~\eqref{eqMainProb}.
\item[Step 2.] Enumerate the $\fpt(n^{t_0 \cdot \min \{t_0+2, s_0\}})$ integer vectors $\overline{\mbf{x}} \in \mbb{Z}^{t_0}_+$ such that $\|\widehat{\mbf{x}} - \overline{\mbf{x}}\|_{\infty} \le \fpt(n^{\min \{t_0+2, s_0\}})$.
\item[Step 3.] For each $\overline{\mbf{x}}$ enumerated in Step 2, find an optimal solution $\overline{\mbf{y}}$ to the $n$-fold integer program $\max\{\mbf{c}^{\mbf{y}} \cdot \mbf{y} : \widetilde{\mbf{H}} \mbf{y} = \mbf{b} - \mbf{B} \overline{\mbf{x}}\}$, where
\[
\widetilde{\mbf{H}} := 
\left[
\begin{array}{cccc}
 \mbf{C}^1 & \cdots & \mbf{C}^n\\
 \hline \\[-.35 cm]
 \mbf{A}^1 \\
 & \ddots\\
 &&\mbf{A}^n
\end{array}
\right]~\text{and}~
\mbf{B} := 
\left[
\begin{array}{cccc}
 \mbf{A}^0\\
 \hline\\[-.35 cm]
\mbf{B}^1\\
\vdots\\
\mbf{B}^n
\end{array}
\right].
\]
\item[Step 4.] From the solutions $\overline{\mbf{y}}$ in Step 3, return $(\overline{\mbf{x}}, \overline{\mbf{y}})$ maximizing $\mbf{c}^{\mbf{x}} \cdot \overline{\mbf{x}} + \mbf{c}^{\mbf{y}}\cdot \overline{\mbf{y}}$. 
\end{enumerate}
\begin{corollary}\label{corRunTime}
Steps~1 to 4 show that a $4$-block integer program can be solved in time $\fpt(X+ Yn^{t_0 \cdot\min \{t_0+2, s_0\}})$, where $X$ is the time to solve the linear relaxation of~\eqref{eqMainProb} and $Y$ is the time to solve an $n$-fold integer program.
\end{corollary}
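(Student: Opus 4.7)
My plan is to establish correctness via a proximity argument and then sum the cost of each step. The key ingredient is the proximity statement that some integer optimum $(\overline{\mathbf{x}}, \overline{\mathbf{y}})$ of~\eqref{eqMainProb} satisfies
\[
\|\widehat{\mathbf{x}} - \overline{\mathbf{x}}\|_\infty \le \xi \in \fpt\bigl(n^{\min\{t_0+2,\,s_0\}}\bigr),
\]
for the constant $\xi$ furnished by Theorem~\ref{thm4BlockVector}. Granted this, Step~2 enumerates a superset of candidates for $\overline{\mathbf{x}}$, Step~3 optimally completes each candidate by solving an $n$-fold IP, and Step~4 returns the best completion, so the algorithm is correct.

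For the proximity statement I would follow the standard cycle-cancellation framework. Choose an integer optimum $(\mathbf{x}^*, \mathbf{y}^*)$ minimizing $\|(\widehat{\mathbf{x}}, \widehat{\mathbf{y}}) - (\mathbf{x}^*, \mathbf{y}^*)\|_1$ and assume $\|\widehat{\mathbf{x}} - \mathbf{x}^*\|_\infty > \xi$ for contradiction. Let $\boldsymbol{\sigma} \in \{\pm 1\}^{t_0+nt}$ be the coordinate-wise sign of the difference (breaking ties by $+1$), and let $\mathbf{H}'$ be $\mathbf{H}$ with its columns rescaled by $\boldsymbol{\sigma}$. Then $\boldsymbol{\sigma} \cdot (\widehat{\mathbf{x}} - \mathbf{x}^*,\, \widehat{\mathbf{y}} - \mathbf{y}^*)$ is nonnegative, lies in $\ker \mathbf{H}'$, and still has $\ell_\infty$-norm exceeding $\xi$. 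Since column sign flips preserve the sparsity pattern that defines the blocks $\mathbf{A}^i, \mathbf{B}^i, \mathbf{C}^i$, Theorem~\ref{thm4BlockVector} applies to $\mathbf{H}'$ and yields a nonzero integer $(\mathbf{u}',\mathbf{v}') \in \mathbb{Z}_+^{t_0}\times \mathbb{Z}_+^{nt}$ in $\ker \mathbf{H}'$ with $(\mathbf{u}',\mathbf{v}') \le \boldsymbol{\sigma}\cdot (\widehat{\mathbf{x}} - \mathbf{x}^*,\,\widehat{\mathbf{y}} - \mathbf{y}^*)$. Undoing the sign flip gives $(\mathbf{u},\mathbf{v}) := \boldsymbol{\sigma}\cdot(\mathbf{u}', \mathbf{v}') \in \ker\mathbf{H}$, orthant-compatible with the difference, which I would then use to show that both $(\mathbf{x}^*,\mathbf{y}^*) + (\mathbf{u},\mathbf{v})$ and $(\widehat{\mathbf{x}}, \widehat{\mathbf{y}}) - (\mathbf{u},\mathbf{v})$ remain feasible (integer feasibility plus upper bounds for the former, LP feasibility plus upper bounds for the latter). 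LP optimality of $(\widehat{\mathbf{x}}, \widehat{\mathbf{y}})$ forces $\mathbf{c}\cdot (\mathbf{u},\mathbf{v}) \le 0$ and IP optimality of $(\mathbf{x}^*,\mathbf{y}^*)$ forces the reverse inequality, so $(\mathbf{x}^*,\mathbf{y}^*) + (\mathbf{u},\mathbf{v})$ is another integer optimum, strictly closer to $(\widehat{\mathbf{x}}, \widehat{\mathbf{y}})$ in $\ell_1$-norm, contradicting minimality.

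With proximity in hand the bookkeeping is straightforward: Step~1 costs $X$ by definition; Step~2 enumerates at most $(2\xi+1)^{t_0} \in \fpt(n^{t_0 \cdot \min\{t_0+2,\,s_0\}})$ candidates; for each, Step~3 fixes the first $t_0$ variables and reduces~\eqref{eqMainProb} to the $n$-fold IP $\max\{\mathbf{c}^{\mathbf{y}}\cdot \mathbf{y} : \widetilde{\mathbf{H}}\mathbf{y} = \mathbf{b} - \mathbf{B}\overline{\mathbf{x}}\}$, which by hypothesis is solvable in time $Y$; Step~4 is linear in the number of candidates. Summing yields total time $\fpt(X + Y \cdot n^{t_0 \cdot \min\{t_0+2,\,s_0\}})$, as claimed. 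The main obstacle is the proximity reduction itself: Theorem~\ref{thm4BlockVector} only speaks about the nonnegative orthant, and one has to check that the column sign-flip does not disturb the 4-block structure and that the orthant-compatible cycle lifts back to give feasibility both below $(\widehat{\mathbf{x}}, \widehat{\mathbf{y}})$ and above $(\mathbf{x}^*, \mathbf{y}^*)$ in the original signed problem; everything else is routine enumeration and reduction.
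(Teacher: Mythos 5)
Your proposal is correct and matches the paper's intended argument: the paper gives no separate proof of this corollary, relying exactly on the enumeration count $(2\xi+1)^{t_0}\in\fpt(n^{t_0\cdot\min\{t_0+2,s_0\}})$ together with the proximity bound of Corollary~\ref{corMain}, which it likewise derives from Theorem~\ref{thm4BlockVector} via the standard sign-flip and cycle-cancellation argument of Eisenbrand--Weismantel. The only nit is that you attribute the two inequalities backwards (LP optimality of $(\widehat{\mbf{x}},\widehat{\mbf{y}})$ applied to the feasible point $(\widehat{\mbf{x}},\widehat{\mbf{y}})-(\mbf{u},\mbf{v})$ gives $\mbf{c}\cdot(\mbf{u},\mbf{v})\ge 0$, while IP optimality gives $\le 0$), but since both inequalities are obtained the conclusion $\mbf{c}\cdot(\mbf{u},\mbf{v})=0$ and the strict $\ell_1$-decrease stand.
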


We can also use Theorem~\ref{thm4BlockVector} to derive new bounds on the proximity problem for $4$-block integer programs.
Cook et al.~\cite{CGST1986} establish one of the first $\ell_{\infty}$-proximity bounds for general integer programs.
Since their work, a variety of bounds have been established~\cite{AHO2020,EW2018,LPSX2020,LPSX2021,PWW2018}.
Notable among these is the result of Eisenbrand and Weismantel~\cite{EW2018}, who use Theorem~\ref{thmSteinitz} to bound the $\ell_1$-proximity by a function of $\Delta$ and the number of equations (in particular, it is independent of the dimension $n$).
However, even the strongest of these general results are polynomial in the dimensions and the largest minor of the constraint matrix; see~\cite{CKPW2021} for $\ell_\infty$-proximity and~\cite{EW2018,LPSX2021} for $\ell_1$-proximity.
Applying these general results to~\eqref{eqMainProb} would yield a proximity bound larger than $\Delta^n$, which is the order of the largest matrix minor. 
For $n$-fold integer programs, Cslovjecsek et al.~\cite[Theorem 4.3]{CEHRW2020} demonstrate a bound of $(2s_0\Delta G+1)^{s_0+4}$ on $\ell_1$-proximity, where  $G \in \mcf{O}(s_0s\Delta)^{(s_0+1)(s+1)}$ is a bound on the height of the Graver basis elements of an $n$-fold integer program~\cite[Lemma 3]{EHK2018}.
For 2-stage stochastic integer programs, Cslovjecsek et al.~\cite[Lemma 4]{CEPVW2021} demonstrate a bound of $2^{\mcf{O}(t_0(t+t_0)\Delta)^{t_0(t_0+t)}}$ on $\ell_{\infty}$-proximity.
We use Theorem~\ref{thm4BlockVector} to bound the $\ell_{\infty}$-proximity for $4$-block integer programs.

\begin{corollary}\label{corMain}
Let $(\widehat{\mbf{x}}, \widehat{\mbf{y}}) \in \mbb{R}^{t_0}_+ \times \mbb{R}^{nt}_+$ be an optimal solution to the linear relaxation of~\eqref{eqMainProb}.
If~\eqref{eqMainProb} is feasible, then it is has an optimal solution $(\overline{\mbf{x}}, \overline{\mbf{y}})$ such that 
\[
\|(\widehat{\mbf{x}}, \widehat{\mbf{y}})  - (\overline{\mbf{x}}, \overline{\mbf{y}}) \|_{\infty} \in \fpt\left(n^{\min \{t_0+2, s_0\}}\right).
\]
\end{corollary}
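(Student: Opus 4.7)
The plan is to combine Theorem~\ref{thm4BlockVector} with the standard proximity-via-cycles argument. Assume~\eqref{eqMainProb} is feasible, and among all optimal integer solutions choose $(\overline{\mbf{x}}, \overline{\mbf{y}})$ to minimize $\|(\widehat{\mbf{x}}, \widehat{\mbf{y}}) - (\overline{\mbf{x}}, \overline{\mbf{y}})\|_1$. Set $\mbf{d} := (\overline{\mbf{x}}, \overline{\mbf{y}}) - (\widehat{\mbf{x}}, \widehat{\mbf{y}}) \in \ker\mbf{H}$ and, aiming for a contradiction, suppose $\|\mbf{d}\|_\infty$ exceeds the threshold $\xi \in \fpt(n^{\min\{t_0+2, s_0\}})$ furnished by Theorem~\ref{thm4BlockVector}.

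To invoke Theorem~\ref{thm4BlockVector}, I would first flip the signs of those columns of $\mbf{H}$ indexed by the negative entries of $\mbf{d}$. The resulting matrix $\mbf{H}'$ retains the 4-block form~\eqref{eq3BlockMatrix} because sign-flipping acts column-wise within each block, and by construction $|\mbf{d}| \in (\mbb{R}_+^{t_0}\times \mbb{R}_+^{nt})\cap \ker\mbf{H}'$ with $\||\mbf{d}|\|_\infty > \xi$. Theorem~\ref{thm4BlockVector} then produces a nonzero $\mbf{z}' \in (\mbb{Z}_+^{t_0}\times \mbb{Z}_+^{nt})\cap \ker\mbf{H}'$ with $\mbf{z}' \le |\mbf{d}|$. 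Reversing the sign flip yields a nonzero integer $\mbf{z} \in \mbb{Z}^{t_0+nt} \cap \ker\mbf{H}$ with $\mbf{z} \sqsubseteq \mbf{d}$.

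Next I would check that $(\overline{\mbf{x}}, \overline{\mbf{y}}) - \mbf{z}$ is again an optimal integer solution. The equation $\mbf{H}((\overline{\mbf{x}}, \overline{\mbf{y}}) - \mbf{z}) = \mbf{b}$ and integrality are immediate. For the nonnegativity and upper-bound constraints, sign-compatibility $\mbf{z} \sqsubseteq \mbf{d}$ forces each coordinate of $(\overline{\mbf{x}}, \overline{\mbf{y}}) - \mbf{z}$ to lie between the corresponding coordinates of $(\widehat{\mbf{x}}, \widehat{\mbf{y}})$ and $(\overline{\mbf{x}}, \overline{\mbf{y}})$, both of which are feasible for~\eqref{eqMainProb}. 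The same sign-compatibility shows $(\widehat{\mbf{x}}, \widehat{\mbf{y}}) + \mbf{z}$ is LP-feasible, so LP optimality of $(\widehat{\mbf{x}}, \widehat{\mbf{y}})$ gives $(\mbf{c}^{\mbf{x}}, \mbf{c}^{\mbf{y}}) \cdot \mbf{z} \le 0$; hence $(\mbf{c}^{\mbf{x}}, \mbf{c}^{\mbf{y}}) \cdot ((\overline{\mbf{x}}, \overline{\mbf{y}}) - \mbf{z}) \ge (\mbf{c}^{\mbf{x}}, \mbf{c}^{\mbf{y}}) \cdot (\overline{\mbf{x}}, \overline{\mbf{y}})$, confirming optimality.

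Finally, since $\mbf{z} \sqsubseteq \mbf{d}$ is nonzero, one has $\|\mbf{d} - \mbf{z}\|_1 = \|\mbf{d}\|_1 - \|\mbf{z}\|_1 < \|\mbf{d}\|_1$, contradicting the choice of $(\overline{\mbf{x}}, \overline{\mbf{y}})$. We conclude $\|\mbf{d}\|_\infty \le \xi \in \fpt(n^{\min\{t_0+2, s_0\}})$. I expect the bookkeeping around the sign flip to be the only real subtlety, since Theorem~\ref{thm4BlockVector} is stated for a fixed matrix $\mbf{H}$ in 4-block form; once that is dispatched, the cost and feasibility verifications reduce to the standard cycle argument of Eisenbrand and Weismantel.
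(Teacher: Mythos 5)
Your proposal is correct and is precisely the ``standard cycle argument'' from Eisenbrand--Weismantel that the paper itself invokes (without spelling it out) to deduce Corollary~\ref{corMain} from Theorem~\ref{thm4BlockVector}: choose the $\ell_1$-nearest optimal integer solution, sign-flip columns so the difference vector lies in the nonnegative orthant, extract a sign-compatible nonzero integer kernel vector, and derive a contradiction via feasibility and optimality of the shifted solutions. All the verifications (preservation of the 4-block structure and of $\Delta$ under the sign flip, box-feasibility of $(\overline{\mbf{x}},\overline{\mbf{y}})-\mbf{z}$, and the objective comparison) check out.
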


\noindent Corollary~\ref{corMain} follows from Theorem~\ref{thm4BlockVector} by using the standard cycle argument in~\cite{EW2018}.

\section{A Colorful Version of the Steinitz Lemma.}\label{secSteinitz}

\begin{proof}[of Theorem~\ref{thmVarSteinitz}]
Throughout the proof, the index $j \in \{1, \dotsc, n\}$ refers to the different `colors' while the index $i$ refers to the $i$th vector in a particular color.
Inequality~\eqref{eqTrivialBound} proves the upper bound of $nd$.
The limiting factor of this approach is that $\left\| \sum_{j=1}^n  \mbf{u}^i_j \right\|$ can grow linearly in $n$. 
To prove the upper bound of $40d^5$, the core idea is to show that there exists permutations $\sigma_1,\ldots,\sigma_n \in \mcf{S}^m$ such that $\left\| \sum_{j=1}^n  \mbf{u}^{\sigma_j(i)}_j \right\|$ can be bounded independently of the number of colors $n$ for all $i\in\{1,\ldots,m\}$.
For that let $\sigma_1,\ldots,\sigma_n \in \mcf{S}^m$ be permutations that minimize 
\begin{equation}\label{eqSteinitzMin}
\max_{i\in \{1,\ldots,m\}}\ \left\| \sum_{j=1}^n  \mbf{u}^{\sigma_j(i)}_j \right\|.
\end{equation}
Furthermore, choose $\sigma_1,\ldots,\sigma_n$ in such a way that they minimize the number of indices $i$ for which the maximum in~\eqref{eqSteinitzMin} is attained. 
We claim
\begin{equation}
\left\| \sum_{j=1}^n \mbf{u}^{\sigma_j(i)}_j \right\| \le (d+1)^2(4d(d+1)+2) \label{eqSteinitzClaim}
\end{equation}
for all $i\in \{1,\ldots,m\}$.
After we establish~\eqref{eqSteinitzClaim}, we can apply the classic Theorem~\ref{thmSteinitz} to the vectors $\sum_{j=1}^n \mbf{u}^{\sigma_j(1)}_j, \dotsc, \sum_{j=1}^n \mbf{u}^{\sigma_j(m)}_j$ to prove the existence of a permutation $\rho \in \mathcal{S}^m$ such that
\[
\left\|\sum_{i=1}^k \sum_{j=1}^n \mbf{u}^{\sigma_j({\rho(i)})}_j \right\|
\le d \left((d+1)^2(4d(d+1)+2)\right)
\le 40d^5
\]
for each $ k \in \{1, \ldots, m\}$.
Theorem~\ref{thmVarSteinitz} will then follow by setting $\pi_j =\sigma_j\circ\rho$ for each $j \in \{ 1, \ldots, n\}$.
It remains to prove~\eqref{eqSteinitzClaim}.
We assume without loss of generality that each $\sigma_j$ is the identity.
For each index $i \in \{1, \dotsc, m\}$, we denote the sum of the $i$th vector across all colors by
\begin{equation}\label{eqAggregate1}
\mbf{u}^{i} := \sum_{j=1}^n \mbf{u}^{i}_j
\end{equation}
In order to derive a contradiction, assume that~\eqref{eqSteinitzClaim} is false.
Suppose 
\begin{equation}\label{eqTOObig}
\left\| \mbf{u}^1 \right\| = \max_{i\in \{1,\ldots,m\}} \left\| \mbf{u}^i \right\| > (d+1)^2(4d(d+1)+2).
\end{equation}

In what follows, the strategy will be to show that there exists an additional $d$ vectors, we will say $\mbf{u}^{2},\ldots,\mbf{u}^{d+1}$, such that the center of $\mbf{u}^{1},\ldots,\mbf{u}^{d+1}$ is close to the origin.
Then, by permuting within each color only the indices $\{1,\ldots,d+1\}$ and adding the $i = 1, \dotsc, d+1$ vector across all colors, we get new vectors $\overline{\mbf{u}}^{1}, \dotsc, \overline{\mbf{u}}^{d+1}$, which we show satisfy $\|\overline{\mbf{u}}^{1}\|,\ldots,\|\overline{\mbf{u}}^{d+1}\|<\|\mbf{u}^{1}\|$.
But this will contradict~\eqref{eqSteinitzMin} because $\|\mbf{u}^1\|$ satisfies~\eqref{eqTOObig}.
Note that $(\mbf{u}^i)_{i=1}^m$ is a zero-sum sequence.
By Carath\'eodory's Theorem there exist $d$ vectors, say $\mbf{u}^2,\ldots,\mbf{u}^{d+1}$, such that
\(
\mbf{0}\in\conv\{\mbf{u}^1,\ldots,\mbf{u}^{d+1}\}.
\)
Hence, there exist $\lambda_1,\ldots,\lambda_{d+1}\in \mbb{R}_+$ such that $\sum_{i=1}^{d+1}\lambda_i=1$ and $\sum_{i=1}^{d+1}\lambda_i \mbf{u}^i=\mbf{0}$.
Let $i'\in \{1, \ldots, d+1\}$ be such that $\lambda_{i'} = \max\{ \lambda_i:\ i\in \{1, \ldots, d+1\}\}$. 

Denote the center of $\mbf{u}^1, \dotsc, \mbf{u}^{d+1}$ by
\begin{equation}\label{eqCentroid1}
\mbf{c}:=\frac{1}{d+1} \cdot \sum_{i=1}^{d+1} \mbf{u}^i = \frac{1}{d+1} \cdot \sum_{i=1}^{d+1} \sum_{j=1}^n \mbf{u}^i_j.
\end{equation}
We have
\begin{align}\label{eqBoundC}
\|\mbf{c}\|=\|\mbf{c}- \mbf{0}\| & =\left\| \sum_{i=1}^{d+1} \left(\frac{1}{d+1}-\frac{\lambda_i}{\lambda_{i'}(d+1)}\right) \mbf{u}^i \right\|\nonumber\\
& \le \sum_{i=1}^{d+1} \left(\frac{1}{d+1}-\frac{\lambda_i}{\lambda_{i'}(d+1)}\right)\cdot \left\|\mbf{u}^i\right\|\nonumber\\
&\le\frac{d}{d+1}\cdot\left\|\mbf{u}^1\right\|.
\end{align}

Let $\|\cdot\|_e : \mbb{R}^{d(d+1)}\to \mbb{R}$ denote the extended norm $\|(\mbf{x}^1,\ldots,\mbf{x}^{d+1})\|_e:=\max\{\|\mbf{x}^1\|,\ldots,\|\mbf{x}^{d+1}\|\}$.
Consider the zero-sum sequence in $\mbb{R}^{d(d+1)}$
\[
\left(\left[\begin{array}{ccl}
\mbf{u}^{1}_{j} &-&\frac{1}{n}\cdot \mbf{u}^1\\
&\vdots&\\
\mbf{u}^{d+1}_{j} &-& \frac{1}{n}\cdot \mbf{u}^{d+1}
\end{array}
\right]\right)_{j=1}^n.
\]
As  $\| \mbf{u}^{i}_j - \frac{1}{n}  \mbf{u}^i\| \le 2$ for all $i\in\{1,\ldots,m\}$ and $j\in\{1,\ldots,n\}$, each vector in the sequence has norm at most $2$.
Applying Theorem~\ref{thmSteinitz} in $\mbb{R}^{d(d+1)}$ we can conclude that there exists a permutation $\tau \in \mathcal{S}^n$ such that
\begin{equation}\label{eqBounded+1sum}
\left\| \sum_{j=1}^k 
\left[\begin{array}{rcl}
\mbf{u}^{1}_{\tau(j)} &-&\frac{1}{n}\cdot \mbf{u}^1\\
&\vdots&\\
\mbf{u}^{d+1}_{\tau(j)} &-& \frac{1}{n}\cdot \mbf{u}^{d+1}
\end{array}
\right] 
\right\|_e
=
\left\| \sum_{j=1}^k 
\begin{bmatrix}
\mbf{u}^{1}_{\tau(j)}\\
\vdots\\
\mbf{u}^{d+1}_{\tau(j)}
\end{bmatrix} - 
\frac{k}{n}
 \begin{bmatrix}
 \mbf{u}^1\\
 \vdots 
\\
\mbf{u}^{d+1}
\end{bmatrix}
\right\|_e\le 2d(d+1)
\end{equation}
for each $k \in \{1, \ldots, n\}$.
For ease of presentation, we can assume without loss of generality that $\tau$ is the identity.
Note that this does not conflict with our previous assumption that $\sigma_1, \dotsc, \sigma_n$ are all equal to the identity because the former permuted within each color whereas the latter just redefines the colors.
Inequality~\eqref{eqBounded+1sum} implies that for each $i\in\{1, \ldots, d+1\}$ and any $t$ consecutive values (modulo $n$) $j_1,\ldots,j_t$ in $\{1, \dotsc, n\}$, i.e., $j_k\equiv j_{1}+k-1 \pmod n$, we have
\begin{equation*}\label{eqConsecutiveColors}
\left\|\sum_{k=1}^{t} \mbf{u}^{i}_{j_k} - \frac{t}{n} \cdot \mbf{u}^i \right\|\le 4d(d+1). 
\end{equation*}

Let $t = \lfloor\frac{n}{d+1}\rfloor$ and
let $\lambda \in [0,1]$ be defined by the equation
\[
\lambda \cdot \frac{\left\lfloor\frac{n}{d+1}\right\rfloor}{n} + (1-\lambda)\cdot\frac{\left\lfloor\frac{n}{d+1}\right\rfloor+1}{n} = \frac{1}{d+1}.
\]
For each $i \in \{1, \ldots, d+1\}$ and any $\floor{\sfrac{n}{(d+1)}}$ consecutive values (modulo $n$) in $\{1, \ldots, n\}$, say the values are $1,\ldots,\floor{\sfrac{n}{(d+1)}}$, we have
\begin{align}\label{eqConsecutiveSteinitz}
&\left\|\sum_{j=1}^{ \left\lfloor\tfrac{n}{d+1}\right\rfloor} \mbf{u}^{i}_{j} - \frac{1}{d+1}  \cdot\mbf{u}^i\right\|\notag\\[.25 cm]
=~& \left\|\lambda \left(\sum_{j=1}^{\left\lfloor\tfrac{n}{d+1}\right\rfloor} \mbf{u}^{i}_{j}  - \frac{\left\lfloor\tfrac{n}{d+1}\right\rfloor}{n} \cdot  \mbf{u}^i \right) +(1-\lambda) \left(\sum_{j=1}^{  \left\lfloor\tfrac{n}{d+1}\right\rfloor+1} \mbf{u}^{i}_{j}  - \frac{\left\lfloor\tfrac{n}{d+1}\right\rfloor+1}{n} \cdot \mbf{u}^{i} \right)\right. \cdots\notag\\[.25 cm]
& \hspace{3 in}\cdots\left.- (1-\lambda) \mbf{u}^{i}_{\left\lfloor\tfrac{n}{d+1}\right\rfloor+1}\right\| \notag\\[.25 cm]
\le ~& \lambda 4d(d+1) + (1-\lambda) 4d(d+1)+\left\|\mbf{u}^{i}_{\left\lfloor\tfrac{n}{d+1}\right\rfloor+1}\right\| \notag\\
\le ~& 4d(d+1)+1.
\end{align}

For each color $j \in \{1, \dotsc, n\}$, define the permutation $\tilde{\sigma}_j\in \mcf{S}^m$ as follows:
\[
\tilde{\sigma}_j(i) := 
\begin{cases}
\left(i+\left\lfloor \frac{j-1}{\floor{n/(d+1)}} \right\rfloor\right) \mod (d+1), & \text{if}~i \in \{1, \ldots,  d+1\}\\
i, & \text{if}~i \in\{d+2, \ldots, m\}.
\end{cases}
\]

\begin{figure}
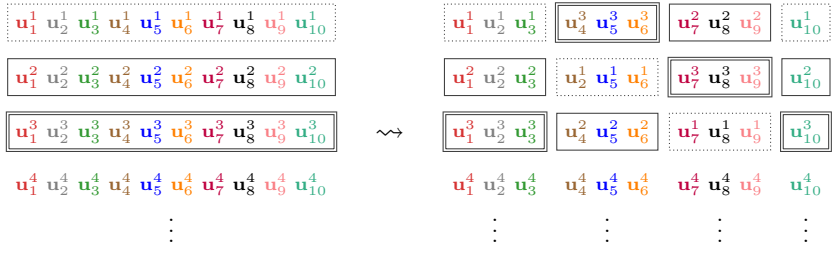

\centering
\begin{tabular}{c@{\hskip 0.5 cm}c@{\hskip 0.5 cm}c@{\hskip 0.1 cm}c@{\hskip 0.1 cm}c@{\hskip 0.1 cm}c}
\tikz[baseline = -3]{\node[draw = black!75, densely dotted, align = center]{\scriptsize${\color{red!80!black!80}\mbf{u}^{1}_1}\ {\color{black!50}\mbf{u}^{1}_{2}}\ {\color{green!50!black!80}\mbf{u}^{1}_{3}}\ {\color{orange!50!black!80}\mbf{u}^{1}_{4}}\ {\color{blue}\mbf{u}^{1}_{5}}\ {\color{orange}\mbf{u}^{1}_{6}}\ {\color{purple}\mbf{u}^{1}_{7}}\ \mbf{u}^{1}_{8}\ {\color{purple!20!red!50}\mbf{u}^{1}_{9}}\ {\color{blue!40!green!80}\mbf{u}^{1}_{10}}$};}
&
&
\tikz[baseline = -3]{\node[draw = black!75, densely dotted,  align = center]{\scriptsize${\color{red!80!black!80}\mbf{u}^{1}_1}\ {\color{black!50}\mbf{u}^{1}_{2}}\ {\color{green!50!black!80}\mbf{u}^{1}_{3}}$};}
&
\tikz[baseline = -3]{\node[draw = black!75, double,  align = center]{\scriptsize${\color{orange!50!black!80}\mbf{u}^{3}_{4}}\ {\color{blue}\mbf{u}^{3}_{5}}\ {\color{orange}\mbf{u}^{3}_{6}}$};}
&
\tikz[baseline = -3]{\node[draw = black!75,  align = center]{\scriptsize${\color{purple}\mbf{u}^{2}_{7}}\ \mbf{u}^{2}_8\ {\color{purple!20!red!50}\mbf{u}^{2}_{9}}$};}
%%%
&
\tikz[baseline = -3]{\node[draw = black!75, densely dotted,  align = center]{\scriptsize${\color{blue!40!green!80}\mbf{u}^{1}_{10}}$};}
\\[.25 cm]
\tikz[baseline = -3]{\node[draw = black!75,  align = center]{\scriptsize${\color{red!80!black!80}\mbf{u}^{2}_1}\ {\color{black!50}\mbf{u}^{2}_{2}}\ {\color{green!50!black!80}\mbf{u}^{2}_{3}}\ {\color{orange!50!black!80}\mbf{u}^{2}_{4}}\ {\color{blue}\mbf{u}^{2}_{5}}\ {\color{orange}\mbf{u}^{2}_{6}}\ {\color{purple}\mbf{u}^{2}_{7}}\ \mbf{u}^{2}_{8}\ {\color{purple!20!red!50}\mbf{u}^{2}_{9}}\ {\color{blue!40!green!80}\mbf{u}^{2}_{10}}$};}
&
&
\tikz[baseline = -3]{\node[draw = black!75,  align = center]{\scriptsize${\color{red!80!black!80}\mbf{u}^{2}_1}\ {\color{black!50}\mbf{u}^{2}_{2}}\ {\color{green!50!black!80}\mbf{u}^{2}_{3}}$};}
&
\tikz[baseline = -3]{\node[draw = black!75, densely dotted,  align = center]{\scriptsize${\color{orange!50!black!80}\mbf{u}^{1}_{2}}\ {\color{blue}\mbf{u}^{1}_{5}}\ {\color{orange}\mbf{u}^{1}_{6}}$};}
&
\tikz[baseline = -3]{\node[draw = black!75, double, align = center]{\scriptsize${\color{purple}\mbf{u}^{3}_{7}}\ \mbf{u}^{3}_8\ {\color{purple!20!red!50}\mbf{u}^{3}_{9}}$};}
%%%
&
\tikz[baseline = -3]{\node[draw = black!75,  align = center]{\scriptsize${\color{blue!40!green!80}\mbf{u}^{2}_{10}}$};}
\\[.25 cm]
\tikz[baseline = -3]{\node[draw = black!75, double,  align = center]{\scriptsize${\color{red!80!black!80}\mbf{u}^{3}_1}\ {\color{black!50}\mbf{u}^{3}_{2}}\ {\color{green!50!black!80}\mbf{u}^{3}_{3}}\ {\color{orange!50!black!80}\mbf{u}^{3}_{4}}\ {\color{blue}\mbf{u}^{3}_{5}}\ {\color{orange}\mbf{u}^{3}_{6}}\ {\color{purple}\mbf{u}^{3}_{7}}\ \mbf{u}^{3}_{8}\ {\color{purple!20!red!50}\mbf{u}^{3}_{9}}\ {\color{blue!40!green!80}\mbf{u}^{3}_{10}}$};}
&
\large$ \rightsquigarrow$
&
\tikz[baseline = -3]{\node[draw = black!75, double,  align = center]{\scriptsize${\color{red!80!black!80}\mbf{u}^{3}_1}\ {\color{black!50}\mbf{u}^{3}_{2}}\ {\color{green!50!black!80}\mbf{u}^{3}_{3}}$};}
&
\tikz[baseline = -3]{\node[draw = black!75,  align = center]{\scriptsize${\color{orange!50!black!80}\mbf{u}^{2}_{4}}\ {\color{blue}\mbf{u}^{2}_{5}}\ {\color{orange}\mbf{u}^{2}_{6}}$};}
&
\tikz[baseline = -3]{\node[draw = black!75, densely dotted,  align = center]{\scriptsize${\color{purple}\mbf{u}^{1}_{7}}\ \mbf{u}^{1}_8\ {\color{purple!20!red!50}\mbf{u}^{1}_{9}}$};}
%%%
&
\tikz[baseline = -3]{\node[draw = black!75, double,  align = center]{\scriptsize${\color{blue!40!green!80}\mbf{u}^{3}_{10}}$};}
\\[.25 cm]
\tikz[baseline = -3]{\node[draw = none,  align = center]{\scriptsize${\color{red!80!black!80}\mbf{u}^{4}_1}\ {\color{black!50}\mbf{u}^{4}_{2}}\ {\color{green!50!black!80}\mbf{u}^{4}_{3}}\ {\color{orange!50!black!80}\mbf{u}^{4}_{4}}\ {\color{blue}\mbf{u}^{4}_{5}}\ {\color{orange}\mbf{u}^{4}_{6}}\ {\color{purple}\mbf{u}^{4}_{7}}\ \mbf{u}^{4}_{8}\ {\color{purple!20!red!50}\mbf{u}^{4}_{9}}\ {\color{blue!40!green!80}\mbf{u}^{4}_{10}}$};}
&
&
\tikz[baseline = -3]{\node[draw =none,  align = center]{\scriptsize${\color{red!80!black!80}\mbf{u}^{4}_1}\ {\color{black!50}\mbf{u}^{4}_{2}}\ {\color{green!50!black!80}\mbf{u}^{4}_{3}}$};}
&
\tikz[baseline = -3]{\node[draw = none,  align = center]{\scriptsize${\color{orange!50!black!80}\mbf{u}^{4}_{4}}\ {\color{blue}\mbf{u}^{4}_{5}}\ {\color{orange}\mbf{u}^{4}_{6}}$};}
&
\tikz[baseline = -3]{\node[draw = none,  align = center]{\scriptsize${\color{purple}\mbf{u}^{4}_{7}}\ \mbf{u}^{4}_8\ {\color{purple!20!red!50}\mbf{u}^{4}_{9}}$};}
%%%
&
\tikz[baseline = -3]{\node[draw = none,  align = center]{\scriptsize${\color{blue!40!green!80}\mbf{u}^{4}_{10}}$};}
\\
$\vdots$
&&
$\vdots$&$\vdots$&$\vdots$&$\vdots$
\end{tabular}
\caption{Suppose $d+1=3$, $n=10$, and $m$ is arbitrary.
On the lefthand side of the $\rightsquigarrow$ symbol, the vectors $(\mbf{u}^i_j)_{i,j}$ are arranged in an $m\times n$ matrix.
The vectors in color $j$ are arranged in the $j$th column of the matrix, and the $i$th row is the sequence $(\mbf{u}^{i}_j)_{j=1}^n$.
The permutation $\tilde{\sigma}_j$ rearranges the $j$th color.
Rows $1, 2$, and $3$ are surrounded with different boxes (dotted line, solid line, and double line) to illustrate how the permutations $\tilde{\sigma}_1, \dotsc, \tilde{\sigma}_n$ affect each row.
On the righthand side of the $\rightsquigarrow$ symbol, we again have an $m\times n$ matrix whose entries are the vectors $\mbf{u}^i_j$.
The colors still correspond to the columns, but now the $i$th row is the sequence $(\mbf{u}^{\tilde{\sigma}_j(i)}_j)_{j=1}^n$.
For $i \in \{1,\dotsc, d+1\}$, the $i$th row consists of $\lfloor \sfrac{n}{(d+1)}\rfloor$ consecutive colors from each sequence $(\mbf{u}^{i'}_j)_{j=1}^n$ for each $i' \in \{1,\dotsc, d+1\}$; this can be seen as each row is now composed of parts from the different boxes from the lefthand side matrix.
According to~\eqref{eqConsecutiveSteinitz}, each set of $\lfloor \sfrac{n}{(d+1)}\rfloor$ consecutive colors in a box, say the superscript corresponding to the box is $i$, differs from $\sfrac{1}{(d+1)}\cdot \mbf{u}^i$ in the $\|\cdot\|$ norm by an additive factor of $4d(d+1)+1$.
The $i$th row ends with $n- (d+1)\lfloor \sfrac{n}{(d+1)} \rfloor$ consecutive colors from $(\mbf{u}^{i}_j)_{j=1}^n$.
Rows $d+2, \ldots, m$ corresponding to the sequences $(\mbf{u}^{d+2}_j)_{j=1}^n, \ldots, (\mbf{u}^{m}_j)_{j=1}^n$ are not changed by $\tilde{\sigma}_1, \ldots, \tilde{\sigma}_n$.
}\label{figRearrange}
\end{figure}

Figure~\ref{figRearrange} illustrates $\tilde{\sigma}_1, \ldots, \tilde{\sigma}_n$ when $d+1 = 3$ and $n = 10$.
For each $i \in\{ 1, \ldots, d+1\}$, the sequence $(\mbf{u}^{\tilde{\sigma}_j(i)}_j)_{j=1}^n$ consists of exactly $\lfloor\sfrac{n}{(d+1)}\rfloor$ consecutive colors from each sequence $(\mbf{u}^{i'}_j)_{j=1}^n$ for each $i' \in \{ 1, \ldots, d+1\} $ and an additional $n - (d+1) \cdot \lfloor\sfrac{n}{(d+1)}\rfloor < d+1$ consecutive colors from the sequence $(\mbf{u}^{i}_j)_{j=1}^n$.
For each $i \in\{ 1, \ldots, d+1\}$, we use~\eqref{eqCentroid1} and~\eqref{eqConsecutiveSteinitz} to see that
\begin{align*}
&\left\|\sum_{j=1}^n \mbf{u}^{\tilde{\sigma}_j(i)}_j\right\|\\
=& \left\|\sum_{j=1}^{\lfloor\sfrac{n}{(d+1)}\rfloor} \mbf{u}^i_j+\sum_{j=1}^{\lfloor\sfrac{n}{(d+1)}\rfloor}\mbf{u}^{i+1}_j+ \cdots + \sum_{j=1}^{\lfloor\sfrac{n}{(d+1)}\rfloor}\mbf{u}^{(i+d+1)\ {\rm mod}\ n}_j+\sum_{j=(d+1)\lfloor \frac{n}{d+1}\rfloor}^n \mbf{u}^{i}_j \right\|\\
\le &   (d+1)(4d(d+1)+1) + \left\|\sum_{i=1}^{d+1} \frac{1}{d+1}\cdot  \mbf{u}^{i}\right\| + \left\|\sum_{j=(d+1)\lfloor \frac{n}{d+1}\rfloor}^n \mbf{u}^{i}_j \right\|\\
= &   (d+1)(4d(d+1)+1) + \left\|\mbf{c}\right\| + \left\|\sum_{j=(d+1)\lfloor \frac{n}{d+1}\rfloor}^n \mbf{u}^{i}_j \right\|\\
\le  &   (d+1)(4d(d+1)+1) + \left\|\mbf{c}\right\| +(d+1).
\end{align*}
Recalling the bound on $\|\mbf{u}^1\|$ in~\eqref{eqTOObig} and the bound on $\|\mbf{c}\|$ in~\eqref{eqBoundC}, we have:
\[
\left\|\sum_{j=1}^n \mbf{u}^{\tilde{\sigma}_j(i)}_j\right\|
\le  (d+1)(4d(d+1)+2)  +\frac{d}{d+1}  \left\|\mbf{u}^1\right\|
<  \left\|\mbf{u}^1\right\|.
\]
For each $i \in \{d+2, \ldots, m\}$ we have $\tilde{\sigma}_j(i) = i$ by definition, so $\|\sum_{j=1}^n \mbf{u}^{\tilde{\sigma}_j(i)}_j\| = \|\mbf{u}^i\| \le \|\mbf{u}^1\|$.
Hence, either $\tilde{\sigma}_1, \ldots, \tilde{\sigma}_n$ yield a smaller objective value in~\eqref{eqSteinitzMin} than ${\sigma}_1, \ldots, {\sigma}_n$, or they have the same objective value but fewer indices $i$ that attain the maximum value. 
Both situations are contradictions. 
\hfill ~\qed
\end{proof}

Our proof of Theorem~\ref{thVarSteinitz} is inspired by Theorem 1 in Ambrus et al.~\cite{ABG2016}, which cleverly utilizes linear optimization and rounding techniques.  
Grinberg and Sevastyanov also use linear optimization techniques to prove the classical Steinitz Lemma (Theorem~\ref{thmSteinitz}), but it is not clear how to leverage their approach to the colorful setting (Theorem~\ref{thmVarSteinitz}).
Our proof starts with an auxiliary lemma.

\begin{lemma}\label{lem:rounding}
Let $\mcf{Q} :=\{\mbfs{\alpha}  \in [0,1]^m :\ \|\mbfs{\alpha} \|_1=k\}$, where $k \in \{0, \ldots, m\}$.
For every $\mbf{y} \in \mcf{Q}$, there exists $\mbf{z} \in \mcf{Q}\cap\{0,1\}^m$ such that $\|\mbf{y}-\mbf{z}\|_1\le \sfrac{m}{2}$.
\end{lemma}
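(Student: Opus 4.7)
The plan is to construct $\mbf{z}$ by a deterministic top-$k$ rounding: sort the coordinates of $\mbf{y}$ in weakly decreasing order as $y_{(1)} \ge y_{(2)} \ge \cdots \ge y_{(m)}$, and let $\mbf{z}$ be the 0/1 indicator vector of the positions corresponding to the $k$ largest entries of $\mbf{y}$. Then $\mbf{z}$ has exactly $k$ ones and each entry lies in $\{0,1\}$, so $\mbf{z} \in \mcf{Q}\cap\{0,1\}^m$ automatically. The remaining task is to verify the distance bound.

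Next I would evaluate $\|\mbf{y}-\mbf{z}\|_1$ by splitting on whether $z_i=1$ or $z_i=0$. Since every coordinate of $\mbf{y}$ lies in $[0,1]$, this gives
\[
\|\mbf{y}-\mbf{z}\|_1 \;=\; \sum_{i=1}^{k}\bigl(1-y_{(i)}\bigr) \;+\; \sum_{i=k+1}^{m} y_{(i)}.
\]
The constraint $\sum_{i=1}^{m} y_{(i)} = k$ then collapses the two sums into a single quantity, namely $\|\mbf{y}-\mbf{z}\|_1 = 2\bigl(k - \sum_{i=1}^{k} y_{(i)}\bigr)$.

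Finally, I would lower-bound $\sum_{i=1}^{k} y_{(i)}$ by a simple averaging argument: since these are the $k$ largest entries of a nonnegative vector whose coordinates sum to $k$, each such entry is at least the average $k/m$, so their sum is at least $k^2/m$. Plugging this back in yields
\[
\|\mbf{y}-\mbf{z}\|_1 \;\le\; 2\Bigl(k - \tfrac{k^2}{m}\Bigr) \;=\; \tfrac{2k(m-k)}{m} \;\le\; \tfrac{m}{2},
\]
where the last inequality is the AM--GM bound $k(m-k)\le m^2/4$.

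I do not anticipate any real obstacle in executing this plan. The only choice is the rounding rule, and top-$k$ thresholding is the natural candidate since it immediately enforces membership in $\mcf{Q}$; after that, the constraint $\|\mbf{y}\|_1=k$ turns the $\ell_1$-distance into a one-line expression, and the averaging bound together with AM--GM closes the estimate.
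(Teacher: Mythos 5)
Your proposal is correct and takes essentially the same route as the paper: sort the coordinates, let $\mbf{z}$ be the top-$k$ indicator vector, rewrite $\|\mbf{y}-\mbf{z}\|_1$ as $2k-2\sum_{i=1}^{k}y_{(i)}$ using $\|\mbf{y}\|_1=k$, and lower-bound the top-$k$ sum by $k^2/m$ via averaging. The only cosmetic difference is that you make the last step explicit as $2k(m-k)/m\le m/2$ by AM--GM, which the paper leaves implicit.
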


\begin{proof}
Let $\mbf{y} = (y_i) \in \mcf{Q}$.
Without loss of generality, suppose $y_1\ge \cdots\ge y_m$. % are the %
Define $\mbf{z}  = (z_i) \in \mcf{Q} \cap \{0,1\}^m$ so that $z_i = 1$ for $i \in \{1, \dotsc, k\}$ and $z_i = 0$ else.
Note that $y_1, \dotsc, y_k\ge \sfrac{k}{m}$, wich gives us
\[
\|\mbf{y}- \mbf{z}\|_1 = \sum_{i=1}^k (1-y_i) + \sum_{i=k+1}^m y_i  = 2k- 2\sum_{i=1}^ky_i \le 2k - \frac{2k^2}{m} \le \frac{m}{2}.
\]
\hfill ~\qed
\end{proof}

\begin{proof}[of Theorem~\ref{thVarSteinitz}.]
Set $\mcf{Q}:=\{\mbfs{\alpha}\in[0,1]^m :\ \|\mbfs{\alpha}\|_1=k\}$ and 
\[
\mcf{P} :=\left\{ (\mbfs{\alpha}_1, \ldots, \mbfs{\alpha}_n) \in \underbrace{\mcf{Q}\times \cdots \times \mcf{Q}}_{n~\text{times}} \;:\; \sum_{i=1}^m\sum_{j=1}^n \alpha^i_j \mbf{u}^{i}_j = \mbf{0} \right\}.
\]
For every $\gamma \in \mbb{R}$, the sequence $(\gamma \mbf{u}^{i}_j)_{i,j}$ is a zero-sum sequence because $(\mbf{u}^{i}_j)_{i,j}$ is a zero-sum sequence. 
Setting $\gamma = \sfrac{k}{m}$ shows that $\mcf{P} \neq \emptyset$.
Let $\overline{\mbfs{\alpha}}=(\overline{\mbfs{\alpha}}_1, \ldots, \overline{\mbfs{\alpha}}_n)$ be a vertex of $\mcf{P}$.
Without loss of generality, we assume $\overline{\mbfs{\alpha}}_1, \ldots, \overline{\mbfs{\alpha}}_{\ell} \not \in \{0,1\}^m$ and $\overline{\mbfs{\alpha}}_{\ell+1}, \ldots, \overline{\mbfs{\alpha}}_{n} \in \{0,1\}^m$.

We claim $\overline{\mbfs{\alpha}}$ has at most $2d$ fractional entries.
For each $j \in \{ 1, \ldots, \ell\}$, we assume the factional components of $\overline{\mbfs{\alpha}}_j = (\overline{{\alpha}}^i_j)_{i=1}^m $ are indexed by $i=1,\ldots,k_j$;
we have $k_j \ge 2$ because $\overline{\mbfs{\alpha}}_j \in \mcf{Q}$.
Our claim is equivalent to $\sum_{j=1}^\ell k_j \le 2d$.
% 
%Let $\epsilon > 0$ be a constant to be fixed later. 
%
For each $j\in \{1,\ldots,\ell\}$ and $i\in \{1,\ldots,k_j-1\}$, define a perturbation 
\[
\mbf{p}^{i}_j\ = (p^{i}_{j,k})_{k=1}^{mn} 
\]
component-wise by
\[
p^{i}_{j,k}=
\left\{
\begin{array}{r@{\hskip .25 cm}l}
1 & \text{if} \quad k = (j-1)m+i \\[.1 cm]
 -1 & \text{if} \quad k = (j-1)m+i +1 \\[.1 cm]
  0 & \text{otherwise}. 
\end{array}\right.
\]
The $\sum_{j=1}^\ell (k_j-1)$ vectors in the sequence $(\mbf{p}^{i}_j)_{i,j}$ are linearly independent. 

We have $2\ell \le \sum_{j=1}^{\ell} k_j$ because $k_1, \dotsc, k_{\ell}\ge2$.
We claim that $\sum_{j=1}^\ell k_j\le d+\ell$, which will imply $\ell \le d$ and   $\sum_{j=1}^\ell k_j\le 2d$.
Assume to the contrary that $\sum_{j=1}^\ell k_j > d+\ell$, or equivalently that $\sum_{j=1}^\ell (k_j-1)>d$.
Recall that each $\mbf{u}^i_j$ is in $\mbb{R}^d$.
Thus, the $\sum_{j=1}^\ell (k_j-1)$ vectors $\sum_{e=1}^m\sum_{f=1}^n p^{i}_{j,(e-1)m+f} \mbf{u}^{e}_f$, which are indexed by $j\in\{1,\ldots,\ell\}$ and $i\in\{1,\ldots,k_j-1\}$, are linearly dependent.
There exists a sequence of numbers $\lambda^{i}_j$, which again is indexed by $j\in\{1,\ldots,\ell\}$ and $i\in\{1,\ldots,k_j-1\}$,
%, where $j=1,\ldots,\ell$ and $i=1,\ldots,k_j-1$, 
that are not all zero and satisfy 
\[
\sum_{e=1}^m\sum_{f=1}^n \left(\sum_{j=1}^\ell\sum_{i=1}^{k_j-1}\lambda^{i}_j \cdot p^{i}_{j,(e-1)m+f} \right)\mbf{u}^{e}_f= \mbf{0}.
\]
Set $\mbf{p}:=\sum_{j=1}^\ell\sum_{i=1}^{k_j-1} \lambda^{i}_j \mbf{p}^{i}_j$.
The vector $\mbf{p}$ is only supported on those components where $\overline{\mbfs{\alpha}}$ is non-integer.
From the previous equation we can choose $\epsilon > 0 $ sufficiently small so that $\overline{\mbfs{\alpha}} \pm \epsilon \mbf{p} \in \mcf{P}$, contradicting that $\overline{\mbfs{\alpha}}$ is a vertex.
Therefore, $\sum_{j=1}^\ell k_j \le 2d$.
% $\sum_{j=1}^\ell (k_j-1)\le d$.
%

For each $j \in\{ 1, \ldots, \ell\}$, we can apply Lemma~\ref{lem:rounding} with $m=k_j$ and $k=\sum_{i=1}^{k_j} \overline{\alpha}^i_j$ to round the fractional components of $\overline{\mbfs{\alpha}}_j$ to 0 or 1, calling the resulting rounded vector $\lfloor\overline{\mbfs{\alpha}}_j\rceil$, such that $\|\lfloor\overline{\mbfs{\alpha}}_j\rceil\|_1=k$ and $\| \lfloor\overline{\mbfs{\alpha}}_j\rceil -\overline{\mbfs{\alpha}}_j \|_1\le \sfrac{k_j}{2}$.
Set
\(
I_j:= \supp (\lfloor\overline{\mbfs{\alpha}}_j\rceil)
\)
for each $j \in \{1, \ldots, \ell\}$ and $I_j := \supp(\overline{\mbfs{\alpha}}_j)$ for each $j \in \{ \ell+1, \ldots, n\}$.
We see that
\[
\begin{array}{rcl@{\hskip 1 cm}l}
\displaystyle\left\|\sum_{j=1}^n \sum_{i \in I_j}  \mbf{u}^i_j \right\| &=& \displaystyle\left\|\sum_{j=1}^{\ell} \sum_{i \in I_j} \left(\lfloor\overline{\alpha}^i_j\rceil-\overline{\alpha}^i_j\right) \mbf{u}^i_j +\sum_{j=1}^{\ell} \sum_{i \in I_j} \overline{\alpha}^i_j \mbf{u}^i_j + \sum_{j=\ell+1}^{n} \sum_{i \in I_j} \overline{\alpha}^i_j \mbf{u}^i_j \right\|\\[.75 cm]
&= &\displaystyle\left\|\sum_{j=1}^{\ell} \sum_{i \in I_j} \left(\lfloor\overline{\alpha}^i_j\rceil-\overline{\alpha}^i_j\right) \mbf{u}^i_j  \right\|\\[.75 cm]
& \le &\displaystyle \sum_{j=1}^{\ell} \frac{k_j}{2} \le \frac{2d}{2} = d. &
\end{array}
\]
\hfill ~\qed
\end{proof}

In the proof of Theorem~\ref{thm4BlockVector} we use the classic Steinitz Lemma in linear subspaces.
The proof of Lemma~\ref{lemmaSteinitzLowRank} follows directly from Theorem~\ref{thmSteinitz}.
We provide a proof for completeness.

\begin{lemma}\label{lemmaSteinitzLowRank}
Let $\|\cdot\|: \mbb{R}^d \to \mbb{R}$ be a norm with unit ball $\mcf{U}$.
Let $\mcf{V} \subseteq \mbb{R}^d$ be a linear subspace.
Let $(\mbf{u}^i)_{i=1}^m$ be a zero-sum sequence in $\mcf{U} \cap \mcf{V}$. 
There exists a permutation $\pi \in \mcf{S}^m$ such that 
\[
\left\|\sum_{i=1}^k \mbf{u}^{\pi(i)}  \right\| \le \dim(\mcf{V})
\] 
for each $k \in \{ 1, \ldots, m\}$.
\end{lemma}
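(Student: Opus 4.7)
The plan is to reduce to the classical Steinitz Lemma (Theorem~\ref{thmSteinitz}) applied within the subspace $\mcf{V}$ itself. Let $r := \dim(\mcf{V})$ and fix any linear isomorphism $\phi : \mcf{V} \to \mbb{R}^r$. Define a norm $\|\cdot\|'$ on $\mbb{R}^r$ by $\|\mbf{w}\|' := \|\phi^{-1}(\mbf{w})\|$; since $\phi$ is a bijective linear map, this is indeed a norm, and its unit ball is $\phi(\mcf{U} \cap \mcf{V})$.

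Next, I would transport the given sequence to $\mbb{R}^r$. Since each $\mbf{u}^i$ lies in $\mcf{U} \cap \mcf{V}$, the sequence $(\phi(\mbf{u}^i))_{i=1}^m$ lies in the unit ball of $\|\cdot\|'$, and linearity of $\phi$ gives $\sum_{i=1}^m \phi(\mbf{u}^i) = \phi(\mbf{0}) = \mbf{0}$, so it remains a zero-sum sequence. Apply Theorem~\ref{thmSteinitz} in dimension $r$ to obtain a permutation $\pi \in \mcf{S}^m$ such that
\[
\left\| \sum_{i=1}^k \phi(\mbf{u}^{\pi(i)}) \right\|' \le r
\]
for every $k \in \{1, \ldots, m\}$.

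Finally, unwinding the definition of $\|\cdot\|'$ and using linearity of $\phi^{-1}$, the lefthand side equals $\|\sum_{i=1}^k \mbf{u}^{\pi(i)}\|$, which yields the claimed bound of $\dim(\mcf{V})$. There is no genuine obstacle here: the only thing to check is that passing to the subspace is legitimate, i.e., that the restriction of $\|\cdot\|$ to $\mcf{V}$ really is a norm (which it clearly is, since positive definiteness, homogeneity, and the triangle inequality are inherited) and that its unit ball is $\mcf{U} \cap \mcf{V}$. Everything else is a direct invocation of Theorem~\ref{thmSteinitz}.
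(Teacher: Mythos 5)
Your proof is correct and is essentially identical to the paper's: both pass to $\mbb{R}^{\dim(\mcf{V})}$ via a linear bijection $\phi$, push the norm forward so that $\phi$ is an isometry, apply Theorem~\ref{thmSteinitz} there, and pull the permutation back. No further comment is needed.
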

\begin{proof}
Without loss of generality we may assume $\mcf{V}= \Span\{\mbf{u}^i:\ i \in \{ 1, \ldots, m\}\}$.
Let $\phi: \mcf{V} \to \mbb{R}^{\dim(\mcf{V}) }$ be a linear bijection.
Define a norm $\|\cdot\|_{*}$ on $\mbb{R}^{\dim(\mcf{V}) }$ by 
\[
\|\phi(\mbf{w})\|_{*} := \|\mbf{w}\| \qquad \forall ~\mbf{w} \in \mcf{V}.
\]

The sequence $(\phi(\mbf{u}^i))_{i=1}^m$ in the unit ball $\phi(\mcf{U})$ is a zero-sum sequence because $\phi$ is linear. 
By the Steinitz Lemma (Theorem~\ref{thmSteinitz}) applied to $(\phi(\mbf{u}^i))_{i=1}^m$ there exists a permutation $\pi \in \mcf{S}^m$ such that 
\[
\left\|\sum_{i=1}^k \mbf{u}^{\pi(i)}\right\|
=
\left\|\phi\left(\sum_{i=1}^k \mbf{u}^{\pi(i)}\right)\right\|_{*}
=
\left\|\sum_{i=1}^k \phi\left(\mbf{u}^{\pi(i)}\right)\right\|_{*} \le \dim(\mcf{V})  
\]
for each $ k \in \{1, \ldots, m\}$.
\hfill ~\qed
\end{proof}

%%%%%%%%%%%%%%%%%%%%%
%%%%%%%%%%%%%%%%%%%%%
%%%%%%%%%%%%%%%%%%%%%

\section{A proof of Theorem~\ref{thm4BlockVector}.}\label{secMainProof}

By studying the 4-block matrix in the generality presented in~\eqref{eq3BlockMatrix}, we may assume without loss of generality that
\begin{equation}\label{eqAssumeA0}
\text{$\mbf{A}^0 = \mbf{0}$.}
\end{equation}
Indeed, $(\mbf{x}, \mbf{y})$ is in the kernel of the 4-block matrix~\eqref{eq3BlockMatrix} if and only if $(\mbf{x}, \mbf{x}, \mbf{y})$ is in the kernel of the {\it 3-block matrix}
\[\left[
\begin{array}{c|l@{\hskip .15 cm}c@{\hskip .15 cm}c@{\hskip .15 cm}c}
 \mbf{0}^{\phantom{1}} & \phantom{-}\mbf{A}^0 &\mbf{C}^1 & \cdots & \mbf{C}^n\\
 \hline&\\[-.35 cm]
 \mathbf{I}^{\phantom{1}}& - \mathbf{I}\\
 \mbf{B}^1 && \mbf{A}^1 \\
 \vdots & && \ddots\\
 \mbf{B}^n & &&&\mbf{A}^n
\end{array}
\right].
\]
For the remainder of the proof we assume~\eqref{eqAssumeA0}.
This assumption decouples the linking variables $\mbf{x}$ and the linking constraints $ [\mbf{C}^1 ~ \cdots ~ \mbf{C}^n]$.

In the proof we write $\mbf{w} \in  \mbb{R}^{nt}$ as $\mbf{w} = (\mbf{w}^1, \ldots, \mbf{w}^n)$, where $\mbf{w}^1, \ldots,\mbf{w}^n \in \mbb{R}^t$.
We also use the notation
\[
\mbf{H}=\left[\begin{array}{c|r}\mbf{0}&\mbf{C}\\\hline \mbf{B}&\mbf{A}\end{array}\right] = 
\left[
\begin{array}{c|c@{\hskip .15 cm}c@{\hskip .15 cm}c@{\hskip .15 cm}c}
\mbf{0} & \mbf{C}^1 & \cdots & \mbf{C}^n\\
 \hline &\\[-.35 cm]
 \mbf{B}^1 & \mbf{A}^1 \\
 \vdots & & \ddots\\
 \mbf{B}^n & &&\mbf{A}^n
\end{array}
\right].
\]

Let $(\widehat{\mbf{x}}, \widehat{\mbf{y}})\in (\mbb{R}_+^{t_0} \times \mbb{R}_+^{nt})\cap  \ker \mbf{H}$. 
Suppose
\begin{equation}\label{eqXi}
\|(\widehat{\mbf{x}},\widehat{\mbf{y}})\|_{\infty} > \xi,
\end{equation}
where $\xi \in \fpt(n^{\min\{t_0+2,s_0\}})$ is to be defined in~\eqref{eqLargeConstant}.
We will prove that there exists a nonzero vector $({\mbf{x}},{\mbf{y}}) \in (\mbb{Z}_+^{t_0} \times \mbb{Z}_+^{nt})\cap  \ker \mbf{H}$ such that $({\mbf{x}},{\mbf{y}})  \le (\widehat{\mbf{x}}, \widehat{\mbf{y}})$.

For many results of the same nature as Theorem~\ref{thm4BlockVector}, the core component in the proof is based on a decomposition of the target vector $(\widehat{\mbf{x}}, \widehat{\mbf{y}})$ into a sequence of `smaller' vectors that can be rearranged so that a subsequence sums to the desired $(\mbf{x},\mbf{y})$.
Using this proof technique, Klein~\cite{K2020} decomposes integer-valued target vectors in the 2-stage setting to obtain a tight bound on the size of Graver basis elements.
In the setting of $3$-block matrices, Klein's decomposition can be used to obtain a bound of $\xi \in \fpt(n^{s_0})$; see Chen et al.~\cite[Theorem 1]{CKXS2019}.
In a related context, let us mention Chen et al.~\cite[Theorem 5]{CKXS2019}, who establish a decomposition technique of integer-valued target vectors in the 3-block setting.
%The first displayed equation in Lemma 8 seems to show that the same blocks is necessary?
Building on Klein's work and assuming the block matrices are all equal, i.e. $\mbf{A}^1 = \cdots =\mbf{A}^n$, $\mbf{B}^1 = \cdots =\mbf{B}^n$, and $\mbf{C}^1 = \cdots = \mbf{C}^n$, Chen et al. redistribute the entries of the target vector {\it across} the blocks to decompose the target into vectors with small components everywhere except in one of the $n$ blocks; one of the key ingredients is a `merging lemma'~\cite[Lemma 5]{CKXS2019}.
Using this clever redistribution, they are able to rearrange the small vectors to obtain $\xi \in \fpt(n^{t^2})$. 
Our target is to shed light on the joint complexity of the number $s_0$ of linking constraints and (in particular) the number $t_0$ of linking variables.
We improve on $\xi \in \fpt(n^{s_0})$ by providing a refined analysis of the decomposition presented in Klein's work: in order to keep control on the contributions in the linking constraints we rearrange elements {\it within} blocks with the help of the colorful Steinitz Lemma.
%; namely, with the help of the colorful Steinitz Lemma we rearrange the components {\it within} blocks of the small vectors.
%
Moreover, our analysis considers arbitrary blocks $\mbf{A}^i$, $\mbf{B}^i$, and $\mbf{C}^i$, as well as target vectors that are not necessarily integer-valued. 
The ability to apply our analysis to integer and fractional target 
vectors allows us to simultaneously obtain bounds on the proximity 
between optimal LP- and IP-solutions as well as on the size of Graver 
basis elements.

%Our proof decomposes $(\widehat{\mbf{x}}, \widehat{\mbf{y}})$ into a sequence of small vectors in $\mathbb{R}^{t_0+nt}_+\cap\ker\begin{bmatrix}\mbf{B} & \mbf{A}\end{bmatrix}$.
%%
%We then use a Steinitz argument to show that if the sequence is long enough, there exists a subsequence that sums to an integer vector in $\ker \mbf{H}$.
%%
%The first part of this decomposition is similar to previous techniques, e.g., \cite{CKXS2019,K2020}. Our main contribution lies in the second part which we discuss below. %in a particular way. 
%%

%Let us now first outline how to show that $\xi \in \fpt(n^{s_0})$; this requires already our decomposition technique. 
%%
%Our argument for the bound $\xi \in \fpt(n^{s_0})$ bears similarity to known decomposition results for $2$-stage stochastic matrices~\cite{K2020}.
%%
%
%
%{\color{red} I started to modify from here on...}

To present our proof, we first state the decomposition together with all of the technical assumptions. 
We then prove Theorem~\ref{thm4BlockVector}, and the details of the technical assumptions are left to later subsections. 

To begin our decomposition, we write $(\widehat{\mbf{x}}, \widehat{\mbf{y}})= (\mbf{0}, \widehat{\mbf{u}}) + (\widehat{\mbf{x}}, \widehat{\mbf{v}})$ where $\widehat{\mbf{u}} \in  \mbb{R}^{nt}_+\cap \ker \mbf{A}$ and $\widehat{\mbf{v}} \in \mbb{R}^{nt}_+$.
% and $\widehat{\mbf{u}} \le \widehat{\mbf{y}}$.
% 
We assume that $\widehat{\mbf{u}}$ is maximal:
\begin{equation}\label{assumeNoMoreu}
\text{there does not exist $\widehat{\mbf{r}} \in (\mbb{R}_+^{nt} \setminus \{\mbf{0}\}) \cap \ker \mbf{A}$ with $\widehat{\mbf{r}} \le \widehat{\mbf{v}} $.}
\end{equation}
In particular, this implies that 
\begin{equation}\label{equ:boundVbyX}
\|\widehat{\mbf{v}}\|_\infty \le \omega_1 \|\widehat{\mbf{x}}\|_\infty,
\end{equation}
where $\omega_1\in\fpt(1)$, see Part b) in Lemma~\ref{lemMinkSum} for details.
We further decompose $\widehat{\mbf{u}},\;\widehat{\mbf{x}}$ and $\widehat{\mbf{v}}$ as follows:
\begin{align*}
(\widehat{\mbf{x}}, \widehat{\mbf{y}})
&= (\mbf{0}, \widehat{\mbf{u}}) \quad\quad\quad + \left(\vphantom{\int}\smash{\underbrace{\sum_{\ell=1}^{t_0} \lambda_\ell \mbf{h}^\ell}_{=\widehat{\mbf{x}}}},\ \vphantom{\int}\smash{\underbrace{\sum_{\ell = 1}^{t_0} \mbf{v}_{j}}_{=\widehat{\mbf{v}}}}\right)\\[0.6cm]
 &= \left(\mbf{0}, \vphantom{\int}\smash{\underbrace{\sum_{j=0}^{{\alpha_0}} \widehat{\mbf{u}}_j}_{=\widehat{\mbf{u}}}}\right) + \left(\sum_{\ell=1}^{t_0} \lambda_\ell \mbf{h}^\ell,\ \sum_{\ell = 1}^{t_0}\vphantom{\int}\smash{\underbrace{\sum_{j=0}^{\alpha_{\ell}} \mbf{v}_{\ell,j}}_{={ \mbf{v}_{j}}}}\right)\\
\end{align*}
where we require the properties listed below:
%
%For the existence of such a decomposition and all the corresponding technicalities we defer to the subsequent sections.
%\[
%(\widehat{\mbf{x}}, \widehat{\mbf{y}}) = (\mbf{0}, \widehat{\mbf{u}}) + (\widehat{\mbf{x}}, \widehat{\mbf{y}} - \widehat{\mbf{u}}),
%\] 
%

%%
%The vectors $(\widehat{\mbf{x}}, \widehat{\mbf{v}})$ and $(\mbf{0}, \widehat{\mbf{u}})$ lie in the kernel of the $2$-stage stochastic matrix $\begin{bmatrix}\mbf{B}&\mbf{A}\end{bmatrix}$ but not necessarily in the kernel of the $3$-block matrix $\mbf{H}$.
%%
%In order to establish an analysis about $\xi$, we consider a decomposition of $\widehat{\mbf{x}}$, $\widehat{\mbf{v}}$, and $\widehat{\mbf{u}}$ as follows:

\begin{minipage}{.9\textwidth}
\schema[]{
\begin{enumerate}[label = \roman*)]
\item\label{technicalKeyProperties1}
${\alpha_0}\in\mbb{Z}_+$ and
${\alpha_0} \ge \frac{\|\widehat{\mbf{u}}\|_\infty}{t(2s\Delta+1)^s}-1.$
\smallskip
\item\label{technicalKeyProperties2}
$\widehat{\mbf{u}}_j \in \mbb{Z}^{nt}_+ \cap \ker \mbf{A}$ and $\|\widehat{\mbf{u}}_j \|_{1}\in\fpt(1)$ for $j \in \{1, \ldots, {\alpha_0}\}$.
\smallskip
\item\label{technicalKeyProperties3}
$\widehat{\mbf{u}}_{0} \in \mbb{R}^{nt}_+\cap \ker \mbf{A}$, $\|\widehat{\mbf{u}}_{0} \|_{1}\in\fpt(n)$ and $\|\widehat{\mbf{u}}_{0} \|_{\infty} \in \fpt(1)$.
\smallskip
\item\label{technicalKeyProperties4}
$\| \sum_{j=1}^k \mbf{C}\widehat{\mbf{u}}_j  - \frac{k}{{\alpha_0}} \sum_{j=1}^{\alpha_0}\mbf{C}\widehat{\mbf{u}}_j \|_{\infty} \in \fpt(1)$
for each $ k \in \{1, \ldots, {\alpha_0}\}$.
%
%\item
%%
%$
%{\alpha_0} \ge \frac{\|\widehat{\mbf{u}}\|_\infty}{t(2s\Delta+1)^s}-1.
%$
%
\end{enumerate}}
{\begin{turn}{-90}\text{Lemma~\ref{lemDecomposeU}}\end{turn}}
\end{minipage}

\begin{minipage}{.9\textwidth}
\schema[]{
For all $\ell \in \{1, \dotsc, t_0\}$,
\begin{enumerate}[label = \roman*)]
\setcounter{enumi}{4}
\item\label{technicalKeyProperties5}
$ \lambda_{\ell} \in \mbb{R}_+$.
\smallskip
\item\label{technicalKeyProperties6}
$\mbf{h}^\ell \in \mbb{Z}^{t_0}_+$ and $\|\mbf{h}^{\ell}\|_{\infty} \le \omega_2 \in \fpt(1)$.
\end{enumerate}

%{\color{red}
%%
%\[
%\begin{array}{rl}
%-\mbf{D}^{-1} \mbf{B}^i \mbf{h}^{\ell} \ge \mbf{0} & \text{for every}~\ell \in \{1, \dotsc, t_0\},\ i \in \{1, \dotsc, n\}~\text{and invertible}\\
%&\text{submatrix}~ \mbf{D}~\text{of}~\mbf{A}^i~\text{such that}~-\mbf{D}^{-1} \mbf{B}^i \widehat{\mbf{x}} \ge \mbf{0},
%\end{array}
%\]
%%
%}
}
{\begin{turn}{-90}\text{Lemma~\ref{lemDecomposeX}}\end{turn}}
\end{minipage}

\begin{minipage}{.9\textwidth}
\schema[]{
%
%For all $\ell \in \{1, \ldots, t_0\}$.
%
\begin{enumerate}[label = \roman*)]
\setcounter{enumi}{6}
\item\label{technicalKeyProperties65}
$\sum_{\ell=1}^{t_0} \alpha_{\ell} \ge \frac{\|\widehat{\mbf{x}}\|_\infty}{{\omega_2}}-t_0(t-s+2) .
$
\smallskip
\end{enumerate}
For all $\ell \in \{1, \ldots, t_0\}$,
\begin{enumerate}[label = \roman*)]
\setcounter{enumi}{7}
\item\label{technicalKeyProperties7}
 $\mbf{v}_{\ell, j} \in \mbb{Z}^{nt}_+$,  $(\mbf{h}^{\ell}, \mbf{v}_{\ell, j}) \in \ker \begin{bmatrix}\mbf{B} & \mbf{A}\end{bmatrix}$ and\\ $\|\mbf{v}^{i}_{\ell, j}\|_1 \le  \fpt(1)$ for each $i \in \{1,\ldots, n\}$ and $j \in \{1, \ldots, \alpha_{\ell}\}$.

\smallskip
\item\label{technicalKeyProperties8}
$ \mbf{v}_{\ell,0} \in \mbb{R}^{nt}_+$ satisfies $\mbf{A} \mbf{v}_{\ell,0} \in \mbb{Z}^{ns}$ and\\ $\|\mbf{v}^{i}_{\ell,0} \|_{1} \in\fpt(1)$ for each $i \in \{1,\ldots, n\}$.
\smallskip
\item\label{technicalKeyProperties9} 
If $\alpha_{\ell} \ge 1$, then $\|\sum_{j=1}^k\mbf{C}\mbf{v}_{\ell,j} - \frac{k}{\alpha_{\ell}}\sum_{j=1}^{\alpha_{\ell}} \mbf{C}\mbf{v}_{\ell,j} \|_{\infty} \in\fpt(1)$  for each $k \in \{1, \ldots, \alpha_{\ell}\}$.
\end{enumerate}}
{\begin{turn}{-90}\text{Lemma~\ref{lemDecomposeV}}\end{turn}}
\end{minipage}

\smallskip

Note that one of our main contributions lies within \ref{technicalKeyProperties9}, where we use our colorful Steinitz Lemma.
% and where we distinguish ourselves from Chen et al.~\cite{CKXS2019} and Klein~\cite{K2020}.

%With established techniques, one can guarantee that the sums are within $\fpt(n)$ and one would achieve an overall bound $\xi\in\fpt({n^{s_0}})$. This overall bound can be found in~\cite[Theorem~1]{CKXS2019}.

%Check~\cite[Theorem~5]{CKXS2019}.

%We allow different matrices $A,B,C$. Does this make any difference? 

%
%To prove Theorem~\ref{thm4BlockVector}, we will use that $\|(\widehat{\mbf{x}},\widehat{\mbf{y}})\|_\infty$ is proportional to $\sum_{\ell=0}^{t_0} \alpha_\ell$ which is the number of vectors $(\mbf{h}^{\ell}, \mbf{v}_{\ell, j})$ and $(\mbf{0}, \mbf{u}_{j})$.  
%%bound $\widehat{\mbf{x}}$, by bounding $\sum_{\ell=1}^{t_0} \alpha_\ell$ which is equivalent to bounding $\sum_{\ell=1}^{t_0} \lambda_\ell$.
%%
%%To do that we will show that 
%%{\color{blue}What remains in our proof of Theorem~\ref{thm4BlockVector} is to 
%%
%While all the vectors $(\mbf{h}^{\ell}, \mbf{v}_{\ell, j})$ and $(\mbf{0}, \mbf{u}_{j})$ are in $\ker \begin{bmatrix}\mbf{B} & \mbf{A}\end{bmatrix}$, they are not necessarily in $\ker \begin{bmatrix}\mbf{0} & \mbf{C}\end{bmatrix}$.
%%
%The key idea is to demonstrate that the decompositions can be reordered so that all partial sums correspond to vectors with small values when multiplied by $\mbf{C}$.
%%
%Then by using the Steinitz Lemma, we show that there exists a strict subsequence of the vectors, such that their sum is in $\ker\mbf{H}$, which will prove the theorem. 
%

\begin{proof}[of Theorem~\ref{thm4BlockVector}]
Using the above decomposition, we can write $\widehat{\mbf{y}} \in \ker \mbf{C}$ as
\begin{equation}\label{eqZeroSum}
\mbf{0} = \mbf{C}\widehat{\mbf{y}} = \sum_{\ell=1}^{t_0} \underbrace{\mbf{C}\left(\sum_{j=1}^{\alpha_{\ell}} \mbf{v}_{\ell,j}\right)}_{=:\mbf{p}^{\ell}}
+ \underbrace{\vphantom{\mbf{C}\left(\sum_{j=1}^{\alpha_{\ell}} \mbf{v}_{\ell,j}\right)}\sum_{\ell=1}^{t_0} \mbf{C}\mbf{v}_{\ell,0} 
+ \mbf{C} \widehat{\mbf{u}}_{0}}_{=: \mbf{r}}
+\underbrace{\displaystyle\vphantom{\mbf{C}\left(\sum_{j=1}^{\alpha_{\ell}} \mbf{v}_{\ell,j}\right)}\sum_{j=1}^{{\alpha_0}} \mbf{C} \widehat{\mbf{u}}_{j}}_{=:\mbf{q}},
\end{equation}
where $\mbf{r} ,\mbf{q},\mbf{p}^{1},\ldots, \mbf{p}^{t_0} \in \mbb{R}^{s_0}$.
Set
\[
\begin{array}{rcl}
\displaystyle\mcf{V} &:=&\displaystyle\Span \{\mbf{r}, \mbf{q}, \mbf{p}^{1}, \ldots, \mbf{p}^{t_0}\}\\[.15 cm]
\displaystyle \psi &:=&\displaystyle 1+{\alpha_0}+\sum_{\ell=1}^{t_0} \alpha_\ell .
\end{array}
\]
We have $\dim(\mcf{V}) \le \min\{s_0,t_0+2\}$ because $\mcf{V}$ is spanned by $t_0+2$ vectors in $\mbb{R}^{s_0}$. 

Our goal is to rearrange the integer vectors $\mbf{C}\mbf{v}_{\ell, j}$, $\mbf{C}\mbf{v}_{\ell, 0}$, $\mbf{C}\widehat{\mbf{u}}_0$ and $\mbf{C}\widehat{\mbf{u}}_j$ in~\eqref{eqZeroSum} so that they lie in a small region.
In order to achieve this, we rearrange the following sequence of fractional vectors:
\begin{equation}\label{eqDicedPieces}
 \underbrace{\frac{1}{\alpha_1} \mbf{p}^{1}, \ldots, \frac{1}{\alpha_1} \mbf{p}^{1}}_{\text{$\alpha_1$ copies}},
 ~ \ldots~ ,
 \underbrace{\frac{1}{\alpha_{t_0}} \mbf{p}^{t_0}, \ldots, \frac{1}{\alpha_{t_0}} \mbf{p}^{t_0}}_{\text{$\alpha_{t_0}$ copies}}, ~
  \underbrace{\frac{1}{{\alpha_0}} \mbf{q}, \ldots, \frac{1}{{\alpha_0}} \mbf{q}}_{\text{${\alpha_0}$ copies}},~
  \mbf{r}.
\end{equation}
These $\psi$ vectors form a zero-sum sequence in $\mcf{V}$.
Define 
\[
\omega_3 := \max \left\{ \frac{1}{\alpha_{1}}\left\| \mbf{p}^{1} \right\|_{\infty} ,\ldots,\frac{1}{\alpha_{t_0}}\left\| \mbf{p}^{t_0} \right\|_{\infty} ,\frac{1}{{\alpha_0}}\left\| \mbf{q} \right\|_{\infty} , \left\| \mbf{r} \right\|_{\infty}\right\}.
\]
Note $\omega_3\in\fpt(n)$ by Properties~\ref{technicalKeyProperties2}, \ref{technicalKeyProperties3}, \ref{technicalKeyProperties7} and~\ref{technicalKeyProperties8}.
Let $(\mbf{s}^j)_{j=1}^{\psi}$ denote the sequence in~\eqref{eqDicedPieces} and distinguish $\mbf{s}^{\psi} := \mbf{r}$.
By applying Lemma~\ref{lemmaSteinitzLowRank} to this sequence, we can conclude that there exists a permutation $\pi \in \mcf{S}^{\psi}$ such that
$\| \sum_{j=1}^k \mbf{s}^{\pi(j)} \|_{\infty} \le \omega_3 \dim(\mcf{V})$
for each $k \in \{1, \ldots, \psi\}$.
We may insist that $\pi(\psi)=\psi$ by introducing an additional error of $\omega_3$ for each partial sum.
More precisely, we have $\pi(\psi)=\psi$ and
\begin{equation}\label{eqRearrangeFractions}
\left\| \sum_{j=1}^k \mbf{s}^{\pi(j)} \right\|_{\infty} \le \omega_3 (\dim(\mcf{V})+1)\le\omega_3 \min\{s_0+1,t_0+3\}
\end{equation}
for each $k \in \{1, \ldots, \psi\}$.
%
%For ease of presentation we will assume that $\pi$ is the identity.

For $k \in \{1, \ldots, \psi-1\}$ and $\ell \in \{1, \ldots, t_0\}$, we need to keep track of how many vectors in $(\mbf{s}^{\pi(j)})_{j=1}^k$ are of the type $\sfrac{1}{\alpha_\ell}\cdot \mbf{p}^\ell$ and how many are of the type $\sfrac{1}{{\alpha_0}} \cdot \mbf{q}$. 
Define
\[
\begin{array}{rcl}
\phi_{k,\ell} & := & \displaystyle\left| \left\{ j \in \{ 1, \ldots, k\} : \mbf{s}^{\pi(j)} = \frac{1}{\alpha_\ell}  \mbf{p}^\ell\right\}\right| \quad \forall\ \ell ~\in \{1, \ldots, t_0\}\\[.5 cm]
\mu_{k} & := &  \displaystyle\left| \left\{ j \in \{ 1, \ldots, k\}: \mbf{s}^{\pi(j)} = \frac{1}{{\alpha_0}}  \mbf{q}^{\phantom{\ell}}\right\}\right|.
\end{array}
\]
For $\ell\in \{1,\ldots, t_0\}$ and $k\in \{1, \ldots, \psi-1\}$, note that $\phi_{k,\ell} \le \alpha_{\ell}$ and $\mu_{k} \le {\alpha_0}$.
We have $ \sum_{j=1}^k \mbf{s}^{\pi(j)} = \sum_{\ell=1}^{t_0}\frac{\phi_{k,\ell}}{\alpha_\ell} \mbf{p}^{\ell} + \frac{\mu_{k}}{{\alpha_0}}\mbf{q}    \approx \sum_{\ell=1}^{t_0}\sum_{j=1}^{\phi_{k,\ell}}\mbf{C} \mbf{v}_{\ell,j}+ \sum_{j=1}^{\mu_{k}}\mbf{C}\widehat{\mbf{u}}_{j}$.
More precisely, by the Properties~\ref{technicalKeyProperties4} and~\ref{technicalKeyProperties9} there exists a constant 
\[
\omega_4\in\fpt(1)
\]
such that
\begin{equation}\label{eqDefineOmega2}
\left\|\left(\sum_{\ell=1}^{t_0}\sum_{j=1}^{\phi_{k,\ell}}\mbf{C} \mbf{v}_{\ell,j}+ \sum_{j=1}^{\mu_{k}}\mbf{C}\widehat{\mbf{u}}_{j}\right) - \left(\sum_{\ell=1}^{t_0}\frac{\phi_{k,\ell}}{\alpha_\ell} \mbf{p}^{\ell} + \frac{\mu_{k}}{{\alpha_0}}\mbf{q} \right) \right\|_{\infty}
\le 
\omega_4.
\end{equation}

Next we bound the number of distinct values that the integer vectors
\begin{equation}\label{eqBoundOffset}
\sum_{\ell=1}^{t_0}\sum_{j=1}^{\phi_{k,\ell}}\mbf{C} \mbf{v}_{\ell,j}+ \sum_{j=1}^{\mu_{k}}\mbf{C}\widehat{\mbf{u}}_j,
\end{equation}
can attain. 
We claim that they can attain at most
\begin{equation}\label{eqSetOmega3}
\omega_5 := 36 \left(s_0^{1/2} \left(\omega_3(\dim(\mcf{V})+1) + \omega_4+\sfrac{1}{2}\right) \right)^{\dim(\mcf{V})} \left(s_0^{1/2} \left(\omega_4+\sfrac{1}{2}\right) \right)^{s_0-\dim(\mcf{V})}
\end{equation}
distinct values.
Note that $\omega_5\in\fpt(n^{\min\{t_0+2,s_0\}})$.
By \eqref{eqRearrangeFractions} the partial sums $\sum_{j=1}^k \mbf{s}^j \in \mbb{R}^{s_0}$ lie in 
\[
\mcf{X} := \left(\omega_3 (\dim(\mcf{V})+1) \cdot [-1,1]^{s_0} \right)\cap \mcf{V}.
\]
By \eqref{eqDefineOmega2} the integer vectors in~\eqref{eqBoundOffset} deviate from $\sum_{j=1}^k \mbf{s}^j$ by an additive factor of $\omega_4$ in each of the $s_0$ components. 
Thus, the values in~\eqref{eqBoundOffset} lie in $\mcf{X}+\mcf{Y}$, where $\mcf{Y} := [-\omega_4,\omega_4]^{s_0}$.
To bound the number of integer vectors in $\mcf{X}+\mcf{Y}$, it suffices to bound the volume of $\mcf{X}+\mcf{Y}+[-\sfrac{1}{2},\sfrac{1}{2}]^{s_0}$.
We simplify computations by using the Euclidean ball $\mcf{E} := \{\mbf{x} \in \mbb{R}^{s_0}: \|\mbf{x}\|_2\le 1\}$ and using that 
\[
s_0^{-1/2}[-1,1]^{s_0} \subseteq \mcf{E} \subseteq (\mcf{E}\cap \mcf{V}) + (\mcf{E}\cap \mcf{V}^{\perp}).
\]
We obtain $\mcf{X}+\mcf{Y}+[-\sfrac{1}{2},\sfrac{1}{2}]^{s_0}$ is contained in the set
\[
\left[s_0^{1/2} \left(\omega_3(\dim(\mcf{V})+1) + \omega_4+\sfrac{1}{2}\right) \cdot \mcf{E} \right]\cap \mcf{V} + \left[s_0^{1/2} \left(\omega_4+\sfrac{1}{2}\right) \cdot \mcf{E} \right]\cap \mcf{V}^{\perp}.
\]
The volume of the Euclidean unit ball $\mcf{E}$ is upper bounded independently of $s_0$, e.g., by 6.
Hence, we can upper bound the volume of the latter set by $\omega_5$.

Finally, set
\begin{equation}\label{eqLargeConstant}%
\xi := (\omega_5 + t_0(t-s+2)+1){\omega_2} \omega_1t(2s\Delta+1)^s,
\end{equation}
which is in  $\fpt(n^{\min\{t_0+2,s_0\}})$.%
\footnote{One can argue that $\xi \in \mcf{O}(n^{\dim \mcf{V}} \cdot \gamma^{1+s_0+\dim\mcf{V}} \cdot (\Delta sts_0t_0)^{\mcf{O}((sts_0t_0)^3)})$.}
By Properties~\ref{technicalKeyProperties1} and \ref{technicalKeyProperties65} and then by \eqref{equ:boundVbyX}, we have
%By~\eqref{eqBoundAlphaOne} and~\eqref{eqBoundAlphaZero} and then by~\eqref{eqNormRelations}, we have
%
\begin{align*}
{\alpha_0}+ \sum_{\ell=1}^{t_0} \alpha_{\ell} &\ge \frac{\|\widehat{\mbf{u}}\|_\infty}{t(2s\Delta+1)^s}-1 + \frac{\|\widehat{\mbf{x}}\|_\infty}{{\omega_2}}-t_0(t-s+2) \\&\ge \frac{\|(\widehat{\mbf{x}},\widehat{\mbf{y}})\|_\infty}{{\omega_2} \omega_1t(2s\Delta+1)^s }-t_0(t-s+2)-1>  \omega_5.
\end{align*}
By the pigeonhole principle there exist $k < k'$ in $\{1, \ldots, \psi\}$ such that 
\[
\sum_{\ell=1}^{t_0}\sum_{j=\psi_{k,\ell}}^{\psi_{k',\ell}}\mbf{C} \mbf{v}_{\ell,j}+ \sum_{j=\mu_{k}}^{\mu_{k'}} \mbf{C}\widehat{\mbf{u}}_{j} = \mbf{0}. 
\]
%
%From Lemmas~\ref{lemDecomposeU} and~\ref{lemDecomposeV} 
By the construction of our decomposition we have
\[
\mbf{A}^i \left(\sum_{j=\psi_{k,\ell}}^{\psi_{k',\ell}} \mbf{v}^{i}_{\ell,j} +\sum_{j=\mu_{k}}^{\mu_{k'}} \widehat{\mbf{u}}^{i}_{j}\right) = -(\psi_{k',\ell} - \psi_{k, \ell}) \mbf{B}^i \mbf{h}^{\ell}
\]
for each $i \in \{ 1, \ldots, n\}$ and $\ell \in \{ 1, \ldots, t_0\}$.
Thus, the vector $({\mbf{x}}, {\mbf{y}}) \in \mbb{R}^{t_0} \times \mbb{R}^{nt}$ defined by
\[
{\mbf{x}}:= \sum_{\ell=1}^{t_0} (\psi_{k',\ell} - \psi_{k,\ell}) \mbf{h}^{\ell}
\]
and 
\[
{\mbf{y}}^i:= 
\sum_{\ell=1}^{t_0} \left(\sum_{j=\psi_{k,\ell}}^{\psi_{k',\ell}} \mbf{v}^{i}_{\ell,j} +\sum_{j=\mu_{k}}^{\mu_{k'}} \widehat{\mbf{u}}^{i}_{j}\right)
\]
for $i \in \{1, \ldots, n\}$, is a nonzero vector in $(\mbb{Z}^{t_0}_+ \times \mbb{Z}^{nt}_+) \cap  \ker \mbf{H}$ that satisfies $ ({\mbf{x}},{\mbf{y}}) \le (\widehat{\mbf{x}}, \widehat{\mbf{y}} )$.
This completes the proof of Theorem~\ref{thm4BlockVector}.
\hfill ~\qed
\end{proof}

\subsection{Decomposing $\widehat{\mbf{u}}$}

To prove the main statement of this subsection, Lemma~\ref{lemDecomposeU}, we will need the following auxiliary lemma,
that bounds the norm of minimal integer kernel vectors.
\begin{lemma}\label{lemSteinitzCycle}
Let $i \in\{1, \ldots, n\}$ and ${\mbf{w}} \in \mbb{R}^t_+ \cap \ker \mbf{A}^i$.
If $\|{\mbf{w}} \|_{1} > t(2s\Delta+1)^s$, then there exists a nonzero vector $\overline{\mbf{w}} \in \mbb{Z}^t_+ \cap \ker \mbf{A}^i$ such that $\overline{\mbf{w}} \le \mbf{w}$.
\end{lemma}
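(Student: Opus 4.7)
\emph{Proof plan.} My plan is to combine Carath\'eodory's theorem for polyhedral cones with the classical Steinitz Lemma (Theorem~\ref{thmSteinitz}) applied inside the kernel of $\mbf{A}^i$. The overall strategy is to decompose $\mbf{w}$ as a nonnegative real combination of primitive integer extremal rays of the cone $\mcf{C}:=\ker\mbf{A}^i\cap\mbb{R}_+^t$, bound the $\ell_1$-norm of each such ray by $(2s\Delta+1)^s$, and use the hypothesis on $\|\mbf{w}\|_1$ to force one ray to appear with coefficient strictly greater than one.

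First I would apply the conic Carath\'eodory theorem: since $\mcf{C}$ is a pointed polyhedral cone in $\mbb{R}^t$, I can write $\mbf{w}=\sum_{k=1}^K\lambda_k\mbf{g}^k$ with $\lambda_k\ge0$, $K\le t$, and each $\mbf{g}^k\in\mbb{Z}_+^t\cap\ker\mbf{A}^i$ a primitive integer generator of an extremal ray of $\mcf{C}$.

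The main obstacle is the norm bound $\|\mbf{g}^k\|_1\le(2s\Delta+1)^s$ for each extremal ray. I will argue by contradiction: assuming $\|\mbf{g}^k\|_1>(2s\Delta+1)^s$, I would form the zero-sum sequence in $\mbb{Z}^s$ obtained by listing the columns $\mbf{a}_1^i,\ldots,\mbf{a}_t^i$ of $\mbf{A}^i$ with multiplicities $g^k_1,\ldots,g^k_t$. These are integer vectors in the $\ell_\infty$-ball of radius $\Delta$ with total sum $\mbf{A}^i\mbf{g}^k=\mbf{0}$. Applying Theorem~\ref{thmSteinitz} with the $\ell_\infty$-norm on $\mbb{R}^s$ (whose Steinitz constant is at most $s$) yields a permutation whose partial sums are integer and lie in $\{-s\Delta,\ldots,s\Delta\}^s$, a set of only $(2s\Delta+1)^s$ points. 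Pigeonhole on the $\|\mbf{g}^k\|_1+1$ partial sums produces two equal ones, and the column multiplicities in the intervening block define a vector $\overline{\mbf{g}}\in\mbb{Z}_+^t\cap\ker\mbf{A}^i$ with $\overline{\mbf{g}}\ne\mbf{0}$, $\overline{\mbf{g}}\ne\mbf{g}^k$, and $\overline{\mbf{g}}\le\mbf{g}^k$. Both $\overline{\mbf{g}}$ and $\mbf{g}^k-\overline{\mbf{g}}$ then lie in $\mcf{C}\setminus\{\mbf{0}\}$ and sum to $\mbf{g}^k$, so by extremality each must be a nonnegative multiple of $\mbf{g}^k$; combined with primitivity this forces $\overline{\mbf{g}}=\mbf{g}^k$, contradicting $\overline{\mbf{g}}\ne\mbf{g}^k$.

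Finally, combining the two bounds gives
\[
\|\mbf{w}\|_1=\sum_{k=1}^K\lambda_k\|\mbf{g}^k\|_1\le t(2s\Delta+1)^s\cdot\max_k\lambda_k,
\]
so the hypothesis forces $\lambda_{k^*}>1$ for some $k^*$. I would then take $\overline{\mbf{w}}:=\mbf{g}^{k^*}$, which is a nonzero integer vector in $\ker\mbf{A}^i$; coordinate-wise, $g^{k^*}_j\le\lambda_{k^*}g^{k^*}_j\le\sum_k\lambda_kg^k_j=w_j$, so $\overline{\mbf{w}}\le\mbf{w}$, as required. The Carath\'eodory step and the concluding averaging are routine; the delicate step is the Steinitz-plus-pigeonhole bound on the extremal rays.
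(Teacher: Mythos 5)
Your proposal is correct, and it takes a route that differs from the one the paper relies on. The paper does not prove Lemma~\ref{lemSteinitzCycle} itself; it defers to Section~3 of Eisenbrand--Weismantel~\cite{EW2018}, where the (possibly fractional) vector $\mbf{w}$ is expanded directly into a zero-sum sequence of columns of $\mbf{A}^i$, with $\lfloor w_j\rfloor$ integral copies of each column plus one fractional leftover $\{w_j\}\mbf{a}_j$ per coordinate; Theorem~\ref{thmSteinitz} bounds the prefix sums by $s\Delta$ in $\ell_\infty$, and the pigeonhole is applied to the integral prefix sums between the at most $t$ fractional vectors, which is where the factor $t$ in the threshold $t(2s\Delta+1)^s$ comes from. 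You instead push all the fractionality into the coefficients of a conic Carath\'eodory decomposition over primitive integer extremal rays of $\ker\mbf{A}^i\cap\mbb{R}^t_+$, so that the Steinitz-plus-pigeonhole engine only ever touches integer sequences, and the factor $t$ comes from the number of rays. The two approaches cost about the same; yours buys a cleaner pigeonhole (no fractional bookkeeping) and, as a by-product, an explicit $\ell_1$-bound of $(2s\Delta+1)^s$ on the extremal rays (essentially a Hilbert/Graver-type bound), at the price of the extra extremality-plus-primitivity argument needed to derive the contradiction from $\overline{\mbf{g}}\sqsubseteq\mbf{g}^k$. One detail worth making explicit in a full write-up: since $p_0=p_N=\mbf{0}$ is always a coincidence among the $N+1$ prefix sums, you should either pigeonhole on $p_0,\dotsc,p_{N-1}$ only (which suffices, as $N>(2s\Delta+1)^s$), or note that $N+1\ge(2s\Delta+1)^s+2$ forces a second, non-degenerate coincidence; otherwise the claim $\overline{\mbf{g}}\neq\mbf{g}^k$ is not immediate. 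This is a one-sentence fix, not a gap in the approach.
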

A proof can be found in~\cite[Section 3]{EW2018} by using $\mbf{w}$ in our notation in place of $\mbf{z}^* - \mbf{x}^*$ in their notation.

\begin{lemma}[Decomposing $\widehat{\mbf{u}}$]\label{lemDecomposeU}
There exists ${\alpha_0} \in \mbb{Z}_+$ such that
\[
\widehat{\mbf{u}} = \widehat{\mbf{u}}_{0} + \sum_{j=1}^{{\alpha_0}} \widehat{\mbf{u}}_j,
\]
where
\begin{enumerate}[label = \roman*)]
\item
$\widehat{\mbf{u}}_j \in \mbb{Z}^{nt}_+ \cap \ker \mbf{A}$ and $\|\widehat{\mbf{u}}_j \|_{1}\le t(2s\Delta+1)^s$ for $j \in \{1, \ldots, {\alpha_0}\}$.
\item
$\widehat{\mbf{u}}_{0} \in \mbb{R}^{nt}_+\cap \ker \mbf{A}$, $\|\widehat{\mbf{u}}_{0} \|_{1}\le n t(2s\Delta+1)^s$ and $\|\widehat{\mbf{u}}_{0} \|_{\infty}\le t(2s\Delta+1)^s$.
\item For $ k \in \{1, \ldots, {\alpha_0}\}$
\begin{equation}\label{eqRearrangeq}
\bigg\| \sum_{j=1}^k \mbf{C}\widehat{\mbf{u}}_j  - \frac{k}{{\alpha_0}} \sum_{j=1}^{\alpha_0}\mbf{C}\widehat{\mbf{u}}_j \bigg\|_{\infty} \le s_02\Delta t(2s\Delta+1)^s.
\end{equation}
\item The number $\alpha_0$ satisfies
\begin{equation}\label{eqBoundAlphaZero}
{\alpha_0} \ge \frac{\|\widehat{\mbf{u}}\|_\infty}{t(2s\Delta+1)^s}-1.
\end{equation}
\end{enumerate}
\end{lemma}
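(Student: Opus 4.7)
The plan is to decompose $\widehat{\mbf{u}}$ block-by-block by iteratively extracting integer kernel vectors via Lemma~\ref{lemSteinitzCycle}, and then to reorder the resulting sequence by a single application of the classical Steinitz Lemma in $\mbb{R}^{s_0}$ to balance the $\mbf{C}$-partial sums. The colorful Steinitz Lemma is not yet required here; its role surfaces only in Lemma~\ref{lemDecomposeV}, where one needs to balance vectors carrying simultaneous contributions across all $n$ blocks.

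Write $\widehat{\mbf{u}} = (\widehat{\mbf{u}}^1, \ldots, \widehat{\mbf{u}}^n)$ with $\widehat{\mbf{u}}^i \in \mbb{R}^t_+ \cap \ker \mbf{A}^i$. For each fixed $i$, I repeatedly invoke Lemma~\ref{lemSteinitzCycle} on the current residual: as long as its $\ell_1$ norm exceeds $t(2s\Delta+1)^s$, the lemma produces a nonzero $\overline{\mbf{w}} \in \mbb{Z}^t_+ \cap \ker \mbf{A}^i$ componentwise below that residual; choosing $\overline{\mbf{w}}$ as a $\sqsubseteq$-minimal (Graver-type) element of $\mbb{Z}^t_+ \cap \ker \mbf{A}^i$ secures $\|\overline{\mbf{w}}\|_1 \le t(2s\Delta+1)^s$. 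After $\alpha^i$ such extractions $\widehat{\mbf{w}}^i_1, \ldots, \widehat{\mbf{w}}^i_{\alpha^i}$, the block residual $\widehat{\mbf{u}}^i_0$ satisfies $\|\widehat{\mbf{u}}^i_0\|_1 \le t(2s\Delta+1)^s$. Zero-padding each $\widehat{\mbf{w}}^i_j$ into $\mbb{R}^{nt}$ and stacking the block residuals into $\widehat{\mbf{u}}_0 := (\widehat{\mbf{u}}^1_0, \ldots, \widehat{\mbf{u}}^n_0)$ gives (i) and (ii) with $\alpha_0 := \sum_i \alpha^i$. For (iv), let $i^*$ attain $\|\widehat{\mbf{u}}\|_\infty = \|\widehat{\mbf{u}}^{i^*}\|_\infty$; by additivity of $\|\cdot\|_1$ on non-negative vectors, $\|\widehat{\mbf{u}}^{i^*}\|_1 \le (\alpha^{i^*} + 1) \cdot t(2s\Delta+1)^s$, hence $\alpha_0 \ge \alpha^{i^*} \ge \|\widehat{\mbf{u}}\|_\infty / t(2s\Delta+1)^s - 1$.

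For (iii), enumerate the extracted vectors in any order as $\widehat{\mbf{u}}_1, \ldots, \widehat{\mbf{u}}_{\alpha_0}$ and consider the sequence $\bigl(\mbf{C}\widehat{\mbf{u}}_j - \tfrac{1}{\alpha_0} \sum_{\ell=1}^{\alpha_0} \mbf{C}\widehat{\mbf{u}}_\ell\bigr)_{j=1}^{\alpha_0}$ in $\mbb{R}^{s_0}$, which is a zero-sum sequence. Each $\mbf{C}\widehat{\mbf{u}}_j$ has $\|\mbf{C}\widehat{\mbf{u}}_j\|_\infty \le \Delta \|\widehat{\mbf{u}}_j\|_1 \le \Delta \cdot t(2s\Delta+1)^s$, so every element of the shifted sequence has $\ell_\infty$ norm at most $2\Delta \cdot t(2s\Delta+1)^s$. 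Applying Theorem~\ref{thmSteinitz} with the scaled $\ell_\infty$ unit ball in $\mbb{R}^{s_0}$ yields a permutation under which every partial sum is bounded in $\ell_\infty$ by $s_0 \cdot 2\Delta \cdot t(2s\Delta+1)^s$, which is precisely~\eqref{eqRearrangeq}. Relabelling $\widehat{\mbf{u}}_1, \ldots, \widehat{\mbf{u}}_{\alpha_0}$ by this permutation completes the construction.

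The main obstacle lies in securing the $\ell_1$ bound on each extracted vector: Lemma~\ref{lemSteinitzCycle} asserts only the existence of $\overline{\mbf{w}}$, so one must further appeal to the standard Graver basis bound for $\mbf{A}^i$ (or extract the bound from the argument of~\cite{EW2018}) to guarantee $\|\overline{\mbf{w}}\|_1 \le t(2s\Delta+1)^s$ at every step. Once that size control is in place the remainder of the proof is bookkeeping and a single invocation of the classical Steinitz Lemma.
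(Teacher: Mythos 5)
Your proposal is correct and follows essentially the same route as the paper: block-wise extraction of integer kernel vectors via Lemma~\ref{lemSteinitzCycle} until each residual is small, zero-padding and aggregating to get $\widehat{\mbf{u}}_0$ and the $\widehat{\mbf{u}}_j$, the same counting argument for the lower bound on $\alpha_0$, and a single application of the classical Steinitz Lemma in $\mbb{R}^{s_0}$ with the $\ell_\infty$ norm to the centered sequence $(\mbf{C}\widehat{\mbf{u}}_j - \tfrac{1}{\alpha_0}\sum_\ell \mbf{C}\widehat{\mbf{u}}_\ell)$ for part~(iii). Your closing observation that the $\ell_1$ bound on each extracted vector must come from taking $\sqsubseteq$-minimal elements and the Graver-basis bound of~\cite{EW2018} is a legitimate (and correctly resolved) point that the paper's own proof glosses over.
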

\begin{proof}
Fix $i \in \{1, \ldots, n\}$. 
Suppose $\|\widehat{\mbf{u}}^{i}\|_{1} > t(2s\Delta+1)^s$.
Apply Lemma~\ref{lemSteinitzCycle} $\beta_i$ times (for some $\beta_i \in \mbb{Z}$) to generate $\mbf{w}^i_1, \ldots, \mbf{w}^i_{\beta_i} \in \mbb{Z}^t_+$ until $\|\widehat{\mbf{u}}^{i} - \sum_{j=1}^{\beta_i}\mbf{w}^{i}_j \|_{1} \le t(2s\Delta+1)^s$.
Extend $\mbf{w}^{i}_j  \in \mbb{R}^t$ to a vector $\widehat{\mbf{u}}_j \in \mbb{R}^{nt}$ by setting $\widehat{\mbf{u}}_j^i := \mbf{w}^i_j$ and $\widehat{\mbf{u}}^{i'}_j = \mbf{0} \in \mbb{R}^t$ for each $i' \in \{1, \ldots, n \}\setminus \{i\}$.

Let $\beta_1, \dotsc, \beta_n$ be constructed as in the previous paragraph. 
The ${\alpha_0} := \sum_{i=1}^n \beta_i$ vectors defined in this way give us the desired $\widehat{\mbf{u}}_1, \ldots, \widehat{\mbf{u}}_{{\alpha_0}}$.
Set $\widehat{\mbf{u}}_{0}$ to be the remainder:
\[
\widehat{\mbf{u}}_{0} := \widehat{\mbf{u}} - \sum_{j=1}^{{\alpha_0}}\widehat{\mbf{u}}_j = \left(\widehat{\mbf{u}}^{1} - \sum_{j=1}^{{\alpha_0}}\widehat{\mbf{u}}^{1}_j,\ \ldots,\ \widehat{\mbf{u}}^{n} - \sum_{j=1}^{{\alpha_0}}\widehat{\mbf{u}}^{n}_j\right).
\]
Then {\it i)} and {\it ii)} follow.

To prove~\eqref{eqRearrangeq} we use the definition of $\Delta$ to derive
\[
\left\| \mbf{C} \widehat{\mbf{u}}_j\right\|_{\infty} \le \max\left\{\left\|\mbf{C}^i\right\|_{\infty}:\ i \in \{1, \ldots, n\}\right\} \left\|\widehat{\mbf{u}}_j\right\|_1 \le \Delta  t(2s\Delta+1)^s
\]
for each $j \in \{1, \ldots, {\alpha_0}\}$.
Applying Theorem~\ref{thmSteinitz} to the zero-sum sequence 
\[
\left(\mbf{C} \widehat{\mbf{u}}_k - \frac{1}{{\alpha_0}} \sum_{j=1}^{{\alpha_0}}  \mbf{C} \widehat{\mbf{u}}_j\right)_{k=1}^{{\alpha_0}}
\]
whose vectors have $\ell_{\infty}$-norm at most $2 \Delta  t(2s\Delta+1)^s$, we obtain~\eqref{eqRearrangeq}, possibly after relabelling.

It remains to prove~\eqref{eqBoundAlphaZero}.
Using parts {\it i)} and {\it ii)}, we have
\[
\left\|\widehat{\mbf{u}}\right\|_{\infty}
= \left\|\widehat{\mbf{u}}_0 +\sum_{j=1}^{\alpha_0}\widehat{\mbf{u}}_j\right\|_{\infty} \le (\alpha_0+1) t(2s\Delta+1)^s.
\]
Rearranging this inequality yields the result.
\hfill ~\qed
\end{proof}

%We use polyhedral techniques to decompose $\widehat{\mbf{x}}$ into small vectors.

\subsection{Decomposing $\widehat{\mbf{x}}$}

%{\color{blue}We use the following sets for the missing proofs.}

%
For each $i \in \{1, \ldots, n\}$, we define the set of feasible basis matrices of $\{\mbf{y} \in \mbb{R}^{t} : \mbf{A}^i \mbf{y} = - \mbf{B}^i \widehat{\mbf{x}}\}$:
\begin{equation}\label{definitionOfFeasibleBasis}
\mcf{B}^i := \left\{\mbf{D} \in \mbb{Z}^{s\times s}:\ 
\begin{array}{l}
\mbf{D} ~\text{is an invertible submatrix of}~\mbf{A}^i\\[.1 cm]
\text{and}~ -\mbf{D}^{-1} \mbf{B}^i \widehat{\mbf{x}} \ge \mbf{0}
\end{array}\right\}.
\end{equation}
%
%to be the set of feasible basis matrices of $\{\mbf{y} \in \mbb{R}^{t} : \mbf{A}^i \mbf{y} = - \mbf{B}^i \widehat{\mbf{x}}\}$.
%

%{\color{red}Timm: Can we omit this paragraph/definition?}

\begin{lemma}[Decomposing $\widehat{\mbf{x}}$]\label{lemDecomposeX}
There exists $\lambda_1, \dotsc, \lambda_{t_0} \in \mbb{R}_+$ satisfying
\begin{equation}\label{eqDecomposeX}
\widehat{\mbf{x}} = \sum_{\ell=1}^{t_0} \lambda_\ell \mbf{h}^\ell,
\end{equation}
where
$\mbf{h}^1, \dotsc, \mbf{h}^{t_0} \in \mbb{Z}^{t_0}_+$ satisfy $\|\mbf{h}^{\ell}\|_{\infty} \le \omega_2$ for each $\ell \in \{1, \dotsc, t_0\}$ and
\[
-\mbf{D}^{-1} \mbf{B}^i \mbf{h}^{\ell} \ge \mbf{0}  ~\text{for every}~\ell \in \{1, \dotsc, t_0\},\ i \in \{1, \dotsc, n\} ~\text{and}~ D\in\mcf{B}^i
\]
%\[
%\begin{array}{rl}
%-\mbf{D}^{-1} \mbf{B}^i \mbf{h}^{\ell} \ge \mbf{0} & \text{for every}~\ell \in \{1, \dotsc, t_0\},\ i \in \{1, \dotsc, n\}~\text{and invertible}\\
%&\text{submatrix}~ \mbf{D}~\text{of}~\mbf{A}^i~\text{such that}~-\mbf{D}^{-1} \mbf{B}^i \widehat{\mbf{x}} \ge \mbf{0},
%\end{array}
%\]
%
and 
\(
\omega_2 \in \fpt(1).
\)
\end{lemma}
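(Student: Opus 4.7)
My plan is to interpret the conditions on the $\mbf{h}^\ell$ as membership in a polyhedral cone $\mcf{C} \subseteq \mbb{R}^{t_0}$ that contains $\widehat{\mbf{x}}$, and then to write $\widehat{\mbf{x}}$ as a conic combination of at most $t_0$ extreme rays of $\mcf{C}$ via Carathéodory. The integrality and size bound on $\mbf{h}^\ell$ will then come from a Cramer's-rule argument on the defining inequalities of $\mcf{C}$.

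First, I define the cone
\[
\mcf{C} := \left\{\mbf{x} \in \mbb{R}^{t_0}_+ :\ -\mbf{D}^{-1}\mbf{B}^i \mbf{x} \ge \mbf{0} \text{ for every } i \in \{1,\dots,n\} \text{ and } \mbf{D} \in \mcf{B}^i\right\}.
\]
By the very definition of $\mcf{B}^i$ in~\eqref{definitionOfFeasibleBasis}, we have $\widehat{\mbf{x}} \in \mcf{C}$. Clearing denominators in each inequality $-\mbf{D}^{-1}\mbf{B}^i\mbf{x} \ge \mbf{0}$ by multiplying with $|\det(\mbf{D})|$ rewrites $\mcf{C}$ as a rational polyhedral cone of the form $\{\mbf{x}\in\mbb{R}^{t_0}_+ : \mbf{M}\mbf{x} \ge \mbf{0}\}$, where every entry of $\mbf{M}$ is a signed $s\times s$ subdeterminant of a submatrix of $\mbf{A}^i$ multiplied by a single entry of $\mbf{B}^i$; by Hadamard's inequality each such entry is bounded in absolute value by $s^{s/2}\Delta^{s+1}$, a quantity in $\fpt(1)$.

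Second, since $\mcf{C} \subseteq \mbb{R}^{t_0}_+$, the cone $\mcf{C}$ is pointed, so by Carathéodory's theorem for conic combinations, $\widehat{\mbf{x}}$ can be written as $\widehat{\mbf{x}} = \sum_{\ell=1}^{t_0}\lambda_\ell \mbf{g}^\ell$ with $\lambda_\ell \in \mbb{R}_+$ and $\mbf{g}^\ell$ generators of extreme rays of $\mcf{C}$ (padding with zeros if fewer rays are needed). Each $\mbf{g}^\ell$ is the one-dimensional solution set of a system consisting of $\mbf{x} \ge \mbf{0}$ combined with $(t_0-1)$ linearly independent tight inequalities from $\mbf{M}\mbf{x} \ge \mbf{0}$. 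Choose $\mbf{h}^\ell \in \mbb{Z}^{t_0}_+$ to be the primitive integer representative of the ray spanned by $\mbf{g}^\ell$, and absorb the scaling factor into $\lambda_\ell$; by Cramer's rule applied to the tight $(t_0-1)\times t_0$ system, the entries of $\mbf{h}^\ell$ are signed $(t_0-1)\times(t_0-1)$ subdeterminants of $\mbf{M}$, whence
\[
\|\mbf{h}^\ell\|_\infty \le (t_0-1)^{(t_0-1)/2}\bigl(s^{s/2}\Delta^{s+1}\bigr)^{t_0-1} =: \omega_2 \in \fpt(1),
\]
again by Hadamard's inequality. This gives the required decomposition.

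The main obstacle is bookkeeping: I need to verify that clearing denominators really does reduce every ray of the cone to a rational direction whose primitive integer generator automatically inherits the nonnegativity $\mbf{h}^\ell \ge \mbf{0}$ and the inequalities $-\mbf{D}^{-1}\mbf{B}^i \mbf{h}^\ell \ge \mbf{0}$ for every $i$ and $\mbf{D} \in \mcf{B}^i$ (not merely the specific inequalities that happened to be tight at that ray). This is immediate from the fact that extreme rays lie in $\mcf{C}$ and $\mcf{C}$ is closed under positive scaling, so both the nonnegativity conditions and the basis-feasibility conditions transfer from $\mbf{g}^\ell$ to $\mbf{h}^\ell$ at no cost. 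The only delicate part is then the explicit bound on $\omega_2$, for which Hadamard's inequality suffices and keeps the dependence on $n$ out of the estimate.
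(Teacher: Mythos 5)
Your proposal is correct and follows essentially the same route as the paper: both define the cone of nonnegativity and basis-feasibility constraints, apply Carath\'eodory's theorem for cones to write $\widehat{\mbf{x}}$ over at most $t_0$ extreme rays, and bound the integer generators via Cramer's rule and Hadamard's inequality (the paper delegates this last step to standard polyhedral theory in~\cite[\S 6.2]{GLS1988}). The only substantive difference is that the paper scales each $\mbf{h}^\ell$ into $\gamma\mbb{Z}^{t_0}$ with $\gamma$ the lcm of all subdeterminants of the $\mbf{A}^i$ rather than taking the primitive generator; this is not needed for the statement itself but is used later (in Lemma~\ref{lem:localCycle}) to make the vectors $-\mbf{D}^{-1}\mbf{B}^i\mbf{h}^\ell$ integer-valued.
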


\begin{proof}
The vector $\widehat{\mbf{x}}$ is in the cone
\[
\mcf{K} := \left\{ \mbf{x} \in \mbb{R}^{t_0}_+ : -\mbf{D}^{-1} \mbf{B}^i \mbf{x} \ge \mbf{0}~~\forall ~i\in \{1,\ldots, n\}~\text{and}~\mbf{D} \in \mcf{B}^i\right\}.
\]
Let $\mcf{H} \subseteq \mcf{K}$ be a set of vectors that define the extreme rays of $\mcf{K}$.
Scale each $\mbf{h} \in \mcf{H}$ so that $\mbf{h} \in \gamma \mbb{Z}^{t_0}$, where
\begin{equation}\label{eqLCM}
\gamma := {\rm lcm} \left\{|\det \mbf{D}| : 
\begin{array}{l}\mbf{D} \in \mbb{Z}^{s\times s}  ~\text{is an invertible submatrix of}~ \mbf{A}^i\\
\text{for some}~i \in \{ 1, \ldots, n\}
\end{array}\right\}.
\end{equation}
In~\eqref{eqLCM} we use lcm to denote the least common multiple.
According to Hadamard's inequality $|\det \mbf{D}| \le \Delta^s s^{s/2}$ for each invertible submatrix $\mbf{D}$ of $\mbf{A}^i$.
Hence, 
\[
\gamma  \le \big(\Delta^s s^{s/2}\big)!
\]
Using standard techniques in polyhedral theory, see~\cite[\S 6.2]{GLS1988}, the number
\begin{equation}\label{eqDeltah}
\omega_2:=\max\left\{\|\mbf{h}\|_{\infty} : \mbf{h} \in \mcf{H}\right\}
\end{equation}
is in $\fpt(1)$.
By Carath\'eodory's Theorem we can write $\widehat{\mbf{x}}$ as a conic combination of $t_0$ vectors in $\mcf{H}$ as in~\eqref{eqDecomposeX}.
\hfill~\qed
\end{proof}

\subsection{Decomposing $\widehat{\mbf{v}}$}

The main result of this subsection is Lemma~\ref{lemDecomposeV} for which we need a number of auxiliary lemmas.
%
%We use the notation from the previous subsection.
%
%We decompose $\widehat{\mbf{v}}$ in two stages. 
%%
%The first stage is~\eqref{eqIdentifyUnits}, and the second is Lemma~\ref{lemDecomposeV}.
%
Throughout we use the notation from the decomposition of $\widehat{\mbf{x}}$ in Lemma~\ref{lemDecomposeX}.

\begin{lemma}\label{lemMinkSum}
\begin{enumerate}[label = \alph*)]\item
For $i \in \{ 1, \ldots, n\}$, we have
\[
\left\{ \mbf{y} \in \mbb{R}^{t}_+ : \mbf{B}^i\widehat{\mbf{x}} +\mbf{A}^i \mbf{y} = \mbf{0} \right\} =  \sum_{\ell=1}^{t_0} \left\{ \mbf{y} \in \mbb{R}^{t}_+ : \lambda_{\ell}\mbf{B}^i\mbf{h}^{\ell} + \mbf{A}^i \mbf{y} = \mbf{0}\right\},
\]
where the summation is a Minkowski Sum.
%
%\end{lemma}
%

%{\color{blue}
%\begin{lemma}\label{boundxhatyhat}
\item
It holds that
\begin{equation*}%\label{eqNormRelations}
\|(\widehat{\mbf{x}},\widehat{\mbf{y}})\|_{\infty}\le\|(\widehat{\mbf{x}},\widehat{\mbf{v}})\|_\infty+\|\widehat{\mbf{u}}\|_\infty \le \omega_1 \|\widehat{\mbf{x}}\|_\infty+\|\widehat{\mbf{u}}\|_\infty,
\end{equation*}
where
\[
\omega_1 := t_0 \Delta^{s} s^{(s+1)/2}.
\]
\end{enumerate}
\end{lemma}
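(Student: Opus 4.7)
The plan is to handle the two claims in turn, with the core of the work in part b). For part a), the inclusion $\supseteq$ is immediate from $\widehat{\mbf{x}} = \sum_\ell \lambda_\ell \mbf{h}^\ell$ (Lemma~\ref{lemDecomposeX}): summing $\mbf{y} = \sum_\ell \mbf{y}_\ell$ of elements from the right-hand polyhedra yields $\mbf{y} \ge \mbf{0}$ and $\mbf{A}^i \mbf{y} = -\mbf{B}^i \widehat{\mbf{x}}$. For the reverse inclusion I would use the standard polyhedral decomposition of the left-hand polyhedron as a convex hull of vertices plus a recession cone. Each vertex corresponds to some basis $\mbf{D} \in \mcf{B}^i$ with basic coordinates equal to $-\mbf{D}^{-1} \mbf{B}^i \widehat{\mbf{x}} = \sum_{\ell=1}^{t_0} \lambda_\ell \bigl(-\mbf{D}^{-1} \mbf{B}^i \mbf{h}^\ell\bigr)$, and Lemma~\ref{lemDecomposeX} ensures each summand is nonnegative, so the vertex splits coordinate-wise into terms $\mbf{v}_\ell^*$ lying in the right-hand polyhedra. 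Convex combinations of vertex decompositions transfer the split to every point in the convex hull, and any recession-cone remainder (which lies in $\mbb{R}_+^t \cap \ker \mbf{A}^i$) can be absorbed into the $\ell = 1$ term without affecting the defining equation.

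For part b), the first inequality is the triangle inequality applied to $\widehat{\mbf{y}} = \widehat{\mbf{u}} + \widehat{\mbf{v}}$, using that $\widehat{\mbf{u}}$ vanishes on the $\widehat{\mbf{x}}$-coordinates. Since $\omega_1 \ge 1$, the second inequality reduces to proving $\|\widehat{\mbf{v}}^i\|_\infty \le \omega_1 \|\widehat{\mbf{x}}\|_\infty$ for each $i$. Set $S_i := \supp(\widehat{\mbf{v}}^i)$ and consider the support-restricted polyhedron $\mcf{R}_i := \{\mbf{y} \in \mbb{R}_+^t : \mbf{A}^i \mbf{y} = -\mbf{B}^i \widehat{\mbf{x}},\ y_j = 0 \text{ for } j \notin S_i\}$. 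The key observation is that $\mcf{R}_i$ has trivial recession cone: any nonzero $\mbf{r} \ge \mbf{0}$ in $\ker \mbf{A}^i$ with $\supp(\mbf{r}) \subseteq S_i$ may be scaled so that $\mbf{r} \le \widehat{\mbf{v}}^i$ and, extended by zeros across the other blocks, contradicts the maximality assumption~\eqref{assumeNoMoreu}. Therefore $\mcf{R}_i$ is a bounded polytope containing $\widehat{\mbf{v}}^i$, and $\widehat{\mbf{v}}^i$ is a convex combination of its vertices.

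Each vertex $\mbf{v}^*$ of $\mcf{R}_i$ is a basic feasible solution whose support indexes linearly independent columns of $\mbf{A}^i$. Applying Cramer's rule to a square invertible integer subsystem, together with Hadamard's inequality, the bound $\|\mbf{B}^i \widehat{\mbf{x}}\|_\infty \le t_0 \Delta \|\widehat{\mbf{x}}\|_\infty$, and $|\det \mbf{D}| \ge 1$ for integer invertible $\mbf{D}$, yields $\|\mbf{v}^*\|_\infty \le t_0 \Delta^s s^{(s+1)/2} \|\widehat{\mbf{x}}\|_\infty = \omega_1 \|\widehat{\mbf{x}}\|_\infty$. Convex combinations preserve this $\ell_\infty$-bound, so the desired bound on $\|\widehat{\mbf{v}}^i\|_\infty$ follows. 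The main subtlety lies in establishing the trivial-recession-cone property of $\mcf{R}_i$ from the maximality assumption; once that is in hand, the remaining estimate is a standard Cramer-plus-Hadamard computation.
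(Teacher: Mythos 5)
Your proposal is correct and follows essentially the same route as the paper: for a) both arguments decompose the left-hand polyhedron into vertices plus a recession-cone part, use linearity of the basic solutions $-\mbf{D}^{-1}\mbf{B}^i(\cdot)$ in the right-hand side together with $\widehat{\mbf{x}}=\sum_\ell\lambda_\ell\mbf{h}^\ell$ and the nonnegativity built into $\mcf{B}^i$, and for b) both invoke the maximality assumption~\eqref{assumeNoMoreu} to eliminate the recession-cone contribution before bounding the vertices via Cramer's rule and Hadamard's inequality. The only differences are cosmetic: you absorb the recession direction into the $\ell=1$ term rather than distributing it proportionally, and you phrase the $\mbf{r}=\mbf{0}$ step via a support-restricted polytope rather than directly from the Carath\'eodory decomposition.
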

%}

%We prove Lemmas~\ref{lemMinkSum} and~\ref{boundxhatyhat} together.
%
\begin{proof}%[of Lemma~\ref{lemMinkSum} and Lemma~\ref{boundxhatyhat}]
For $\ell \in \{1, \ldots, t_0\}$, let 
\[
\mbf{y}^{\ell} \in \left\{ \mbf{y} \in \mbb{R}^{t}_+ : \lambda_{\ell}\mbf{B}^i\mbf{h}^{\ell} + \mbf{A}^i \mbf{y} = \mbf{0}\right\}.%{\color{red}\mcf{Q}^{i}_{\ell}.}
\]
We have $\sum_{\ell=1}^{t_0} \mbf{y}^{\ell} \ge \mbf{0}$ and by Lemma~\ref{lemDecomposeX}
\[
\mbf{A}^i \left(\sum_{\ell=1}^{t_0} \mbf{y}^{\ell} \right) = -\mbf{B}^i \left(\sum_{\ell=1}^{t_0} \lambda_{\ell} \mbf{h}^{\ell} \right) = -\mbf{B}^i \widehat{\mbf{x}}.
\]
Thus, the `$\supseteq$' inclusion holds.

Let 
\[
\widetilde{\mbf{y}} \in \left\{ \mbf{y} \in \mbb{R}^{t}_+ : \mbf{A}^i \mbf{y} = -\mbf{B}^i \widehat{\mbf{x}}\right\} =: \mcf{Q}.
\]
For each $ \mbf{D} \in \mcf{B}^i$ (see Definition~\eqref{definitionOfFeasibleBasis}) and $\mbf{x} \in \mbb{R}^{t_0}$, we write $\mbf{y}_{(\mbf{D}, \mbf{x})}$ to denote the unique vector in $\mbb{R}^{t}$ whose support is contained in the columns of $\mbf{D}$ in $\mbf{A}^i$ and satisfies $\mbf{A}^i \mbf{y}_{(\mbf{D}, \mbf{x})} = - \mbf{B}^i \mbf{x}$.
By Carath\'eodory's Theorem, there exist vertices $\mbf{y}_{(\mbf{D}^1, \widehat{\mbf{x}})}, \ldots, \mbf{y}_{(\mbf{D}^{t+1}, \widehat{\mbf{x}})}$ of $\mcf{Q}$, numbers $\mu_1, \ldots, \mu_{t+1} \ge 0$ with $\sum_{k=1}^{t+1} \mu_k = 1$, and $\mbf{r} \in \mbb{R}_+^t \cap \ker \mbf{A}^i$ such that
\begin{equation}\label{eqConvexCombVertices}
\begin{array}{rcl}
\widetilde{\mbf{y}} &=&  \displaystyle\mbf{r} + \left(\sum_{k=1}^{t+1} \mu_k \mbf{y}_{(\mbf{D}^k, \widehat{\mbf{x}})}\right) \\[.5 cm]%\\[.5 cm]
& =& 
\displaystyle \sum_{\ell=1}^{t_0} \lambda_\ell \left(\frac{1}{\sum_{p=1}^{t_0} \lambda_p} \mbf{r} + \sum_{k=1}^{t+1} \mu_k   \mbf{y}_{(\mbf{D}^k, \mbf{h}^{\ell})}\right),
\end{array}
\end{equation}
where the second equation follows from~\eqref{eqDecomposeX}.
Given that $\mbf{D}^1, \ldots, \mbf{D}^{t+1} \in \mcf{B}^i$, we have $\mbf{y}_{(\mbf{D}^k, \mbf{h}^{\ell})}\ge \mbf{0}$ for each $k \in \{1, \ldots, t+1\}$ and $\ell \in \{1, \ldots, t_0\}$.
Thus, 
\[
\mbf{y}^{i}_{\ell} :=  \lambda_{\ell} \left(\frac{1}{\sum_{p=1}^{t_0} \lambda_p} \mbf{r} + \sum_{k=1}^{t+1} \mu_k  \mbf{y}_{(\mbf{D}^k, \mbf{h}^{\ell})}\right) \ge \mbf{0}
\]
and $\mbf{A}^i \mbf{y}^{i}_{\ell} = \lambda_{\ell} \sum_{k=1}^{t+1} \mu_k \mbf{A}^i \mbf{y}_{(\mbf{D}^k, \mbf{h}^{\ell})} = -\lambda_{\ell}\mbf{B}^i \mbf{h}^{\ell}$.
Thus, the `$\subseteq$' inclusion holds. 

Now we prove Part b). %Lemma~\ref{boundxhatyhat}.
It suffices to prove $\|\widehat{\mbf{y}}^{i}-\widehat{\mbf{u}}^{i}\|_{\infty} \le \omega_1  \|\widehat{\mbf{x}}\|_{\infty}$ for each $i \in \{1, \ldots, n\}$.
Using the notation in the previous paragraph we write $\widetilde{\mbf{y}} = \widehat{\mbf{y}}^{i}-\widehat{\mbf{u}}^{i}$.
For this choice of $\widetilde{\mbf{y}}$, assumption \eqref{assumeNoMoreu} then implies that $\mbf{r}=\mbf{0}$ in~\eqref{eqConvexCombVertices}.
Hence, $\widehat{\mbf{y}}^{i}-\widehat{\mbf{u}}^{i} = \sum_{k=1}^{t+1} \mu_k (\mbf{D}^k)^{-1} \mbf{B}^i \widehat{\mbf{x}}$, where $\mu_1, \ldots, \mu_{t+1}$ are convex multipliers. 
By Hadamard's inequality and Cramer's Rule we have
\[
\|\widehat{\mbf{v}}^{i}\|_{\infty} \le \  t_0  \max_k\{\|(\mbf{D}^k)^{-1} \mbf{B}^i\|_\infty\}\|\widehat{\mbf{x}}\|_{\infty}
\le t_0  \Delta^{s} s^{(s+1)/2}  \|\widehat{\mbf{x}}\|_{\infty}.
\]
\hfill~\qed
\end{proof}

%{\color{blue}Since $(\widehat{\mbf{x}}, \widehat{\mbf{y}}-\widehat{\mbf{u}}) \in \ker \begin{bmatrix}\mbf{B} &\mbf{A} \end{bmatrix}$, we can apply Lemma~\ref{lemMinkSum} to $\widehat{\mbf{y}}^i-\widehat{\mbf{u}}^i$ for each $i \in \{1, \dotsc, n\}$ to obtain}
%%
%\begin{equation}\label{eqIdentifyUnits}
%\widehat{\mbf{y}} - \widehat{\mbf{u}}   = \sum_{\ell=1}^{t_0} \mbf{v}_{\ell} =  \left(\sum_{\ell=1}^{t_0} \mbf{v}^{1}_{\ell},\ \ldots,\ \sum_{\ell=1}^{t_0} \mbf{v}^{n}_{\ell}\right),
%\end{equation}
%%
%where $\mbf{v}_{\ell} \in \mbb{R}^{nt}$ and 
%%{\color{red}\sout{$\mbf{v}^{i}_{\ell} \in \mcf{Q}^{i}_{\ell}$ for each $\ell \in \{1, \ldots, t_0\}$ and $i \in \{1, \ldots, n\}$.
%%%
%%In particular,}}
%%
%\[
%\lambda_{\ell}\mbf{B}^i {\color{blue}\mbf{h}^{\ell}}
% + \mbf{A}^i \mbf{v}^i_{\ell} = \mbf{0}
% \]
%%
%for each $\ell \in \{1, \ldots, t_0\}$ and $i \in \{1, \ldots, n\}$.
%% 
%If the coefficient $\lambda_{\ell}$ is large enough for some $\ell$, then we will be able to extract an integer vector from $\mbf{v}^{i}_{\ell}$ for each $i \in \{1,\ldots, n\}$. 
%%
%The following lemma quantifies `large enough'. 
%%
%We state it using $\beta$ rather than $\lambda_{\ell}$ so that we can apply it multiple times for values other than $\lambda_{\ell}$.
%%
%The proof technique is similar to that used by Klein~\cite[Lemma 4]{K2020}.
%%
%{\color{blue}Recall $\omega_2$ from~\eqref{eqDefnOmega0}.}

\begin{lemma}\label{lem:localCycle}
Let $i \in \{1, \ldots, n\}$ and $\ell \in \{1, \ldots, t_0\}$.
Let $\mbf{w} \in \mbb{R}^t_+$ and $\beta \in \mbb{R}_+$ such that $(\beta\mbf{h}^{\ell}, \mbf{w}) \in \ker \begin{bmatrix}\mbf{B}^i & \mbf{A}^i\end{bmatrix}$. 
Then
\begin{enumerate}[label = \roman*)]
\item If $\beta \ge  t-s+1$, then there exists a vector $\overline{\mbf{w}} \in\mbb{Z}_+^t$ such that $\overline{\mbf{w}} \le\mbf{w} $, $\|\overline{\mbf{w}}\|_1 \le \Delta^{s+1}s^s t_0 {\omega_2}$, and $(\mbf{h}^{\ell}, \overline{\mbf{w}}) \in \ker \begin{bmatrix}\mbf{B}^i & \mbf{A}^i\end{bmatrix}$.
\item If $\mbf{w} \le \mbf{v}^i_{\ell}$, then $\|\mbf{w}\|_1 \le \beta\Delta^{s+1}s^s t_0 {\omega_2}$.

\end{enumerate}
\end{lemma}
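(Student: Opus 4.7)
The plan is to derive both parts from a single Minkowski--Weyl decomposition inside the parametric polyhedron
\[
\mcf{P} \;:=\; \left\{\mbf{y} \in \mbb{R}^t_+ :\ \mbf{A}^i \mbf{y} = -\mbf{B}^i \mbf{h}^{\ell}\right\}.
\]
The hypothesis $(\beta\mbf{h}^{\ell},\mbf{w}) \in \ker[\mbf{B}^i~\mbf{A}^i]$ places $\mbf{w}/\beta$ in $\mcf{P}$. Since $\mcf{P}$ has affine dimension at most $t-s$, Carath\'eodory's theorem will give a representation
\[
\mbf{w}/\beta \;=\; \mbf{r} \;+\; \sum_{k=1}^{t-s+1} \mu_k \mbf{v}_k,
\]
with $\mbf{r}$ in the recession cone $\ker \mbf{A}^i \cap \mbb{R}_+^t$, each $\mbf{v}_k$ a vertex of $\mcf{P}$, and $\mu_k \ge 0$ summing to $1$. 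The first key observation is that every vertex equals $-\mbf{D}^{-1}\mbf{B}^i\mbf{h}^{\ell}$ for some $\mbf{D} \in \mcf{B}^i$; because $\mbf{h}^{\ell} \in \gamma\,\mbb{Z}^{t_0}$ by the scaling in Lemma~\ref{lemDecomposeX} and $\det(\mbf{D})$ divides $\gamma$, the vertex is integer-valued. A Hadamard/Cramer estimate on $\mbf{D}^{-1}$ together with $\|\mbf{h}^{\ell}\|_{\infty} \le \omega_2$ and the fact that $\mbf{v}_k$ is supported on at most $s$ columns then yields $\|\mbf{v}_k\|_1 \le \Delta^{s+1}s^s t_0\omega_2$.

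For part i), I would pick any $k^{*}$ with $\mu_{k^{*}} \ge 1/(t-s+1)$, which exists by pigeonhole. Once $\beta \ge t-s+1$, we get $\beta\mu_{k^{*}} \ge 1$, and because every other summand is nonnegative we conclude $\mbf{v}_{k^{*}} \le \beta\mu_{k^{*}}\mbf{v}_{k^{*}} \le \mbf{w}$; the condition $\mbf{A}^i\mbf{v}_{k^{*}} = -\mbf{B}^i\mbf{h}^{\ell}$ is automatic, so $\overline{\mbf{w}} := \mbf{v}_{k^{*}}$ satisfies all three requirements.

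For part ii), I would reuse the same decomposition, writing $\mbf{w} = \beta\mbf{r} + \sum_k \beta\mu_k \mbf{v}_k$ and splitting $\|\mbf{w}\|_1 \le \beta\|\mbf{r}\|_1 + \beta\max_k \|\mbf{v}_k\|_1$. The second term already matches the target bound, so the real content is controlling $\beta\|\mbf{r}\|_1$. This is where the hypothesis $\mbf{w}\le\mbf{v}^i_{\ell}$ enters: we have $\beta\mbf{r} \le \mbf{w} \le \mbf{v}^i_{\ell} \le \widehat{\mbf{v}}^i$, so padding $\beta\mbf{r}$ with zeros in the remaining block coordinates produces a nonnegative vector in $\ker\mbf{A}$ dominated by $\widehat{\mbf{v}}$. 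If $\|\beta\mbf{r}\|_1$ were large enough to trigger Lemma~\ref{lemSteinitzCycle}, we would extract a nonzero integer kernel element beneath $\widehat{\mbf{v}}$, directly contradicting the maximality assumption~\eqref{assumeNoMoreu}. This forces $\|\beta\mbf{r}\|_1 \le t(2s\Delta+1)^s$, a quantity of lower order than $\beta\Delta^{s+1}s^s t_0 \omega_2$, which can be absorbed.

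The main obstacle will be the last step: pushing the bound on $\beta\|\mbf{r}\|_1$ through cleanly so that the final estimate comes out in the exact multiplicative form $\beta\Delta^{s+1}s^s t_0\omega_2$ rather than with a leftover additive constant. The maximality assumption~\eqref{assumeNoMoreu} carries the weight of the argument here, and one must verify carefully that the zero-padded $\beta\mbf{r}$ really is dominated by $\widehat{\mbf{v}}$ (not by $\widehat{\mbf{y}}=\widehat{\mbf{u}}+\widehat{\mbf{v}}$) so that the cycle extracted by Lemma~\ref{lemSteinitzCycle} falls into the region where~\eqref{assumeNoMoreu} applies.
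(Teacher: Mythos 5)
Your setup and part i) match the paper's proof essentially verbatim: the same polyhedron, the same Carath\'eodory decomposition $\mbf{w} = \beta\mbf{r} + \sum_{\mbf{D}}\tau_{\mbf{D}}\beta\mbf{v}^{\mbf{D}}$ into a recession-cone part plus a convex combination of at most $t-s+1$ integer vertices $-\mbf{D}^{-1}\mbf{B}^i\mbf{h}^{\ell}$, the same Hadamard bound on the vertices, and the same pigeonhole step $\tau_{\mbf{D}}\beta\ge 1$ to extract $\overline{\mbf{w}}$.

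For part ii), however, there is a genuine gap, and it is exactly the obstacle you flag at the end. Your route through Lemma~\ref{lemSteinitzCycle} can only conclude $\|\beta\mbf{r}\|_1 \le t(2s\Delta+1)^s$, leaving an additive remainder on top of $\beta\Delta^{s+1}s^s t_0\omega_2$. That remainder cannot be ``absorbed'': the claimed bound is multiplicative in $\beta$, and for small $\beta$ (indeed for $\beta=0$, where the claim forces $\mbf{w}=\mbf{0}$) the additive term dominates, so you do not prove the lemma as stated. The missing observation is that assumption~\eqref{assumeNoMoreu} forbids nonzero \emph{real} nonnegative kernel vectors of $\mbf{A}$ dominated by $\widehat{\mbf{v}}$ --- not merely integer ones. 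Since $\beta\mbf{r}$ itself, zero-padded to the other blocks, is a nonnegative real vector in $\ker\mbf{A}$ with $\beta\mbf{r}\le\mbf{w}\le\mbf{v}^i_{\ell}\le\widehat{\mbf{v}}^i$, assumption~\eqref{assumeNoMoreu} forces $\mbf{r}=\mbf{0}$ outright; no appeal to Lemma~\ref{lemSteinitzCycle} is needed, and the bound $\|\mbf{w}\|_1=\|\sum_{\mbf{D}}\tau_{\mbf{D}}\beta\mbf{v}^{\mbf{D}}\|_1\le\beta\Delta^{s+1}s^s t_0\omega_2$ follows exactly. With that correction your argument coincides with the paper's.
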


\begin{proof}%[of Lemma~\ref{lem:localCycle}]
Define the polyhedron
\[
\mcf{Q} := \left\{ \mbf{y} \in \mbb{R}^{t}_+ : \mbf{B}^i\mbf{h}^{\ell} +\mbf{A}^i \mbf{y} = \mbf{0} \right\}
\]
when $\beta > 0$ and 
\[
\mcf{Q} := \mbb{R}^{t}_+ \cap \ker \mbf{A}^i 
\]
when $\beta =0$.
The polyhedron $\mcf{Q} $ is nonempty because it contains $\sfrac{1}{\beta} \cdot  \mbf{w} $ when $\beta> 0$ and $\mbf{0}$ when $\beta = 0$.
Set $\mbf{v}^{\mbf{D}} := -\mbf{D}^{-1}\mbf{B}^i \mbf{h}^{\ell}$ for each $\mbf{D}  \in \mbb{Z}^{s\times s}$ that is an invertible submatrix of $\mbf{A}^i$ and a feasible basis matrix for $\mcf{Q}$; thus, $\mbf{v}^{\mbf{D}} \ge \mbf{0}$.
The points $\mbf{v}^{\mbf{D}}$ are the vertices of $\mcf{Q}$, and they are integer-valued because we have scaled $\mbf{h}^{\ell}$ to lie in $\gamma \mbb{Z}^{t_0}$ and $\gamma$ satisfies~\eqref{eqLCM}.
For each $\mbf{D}$, we use Hadamard's inequality and the definition of ${\omega_2}$ in~\eqref{eqDeltah} to conclude
\begin{equation}\label{eqHadamard}
\left\|\mbf{v}^{\mbf{D}}\right\|_1 
\le 
\left\| -\mbf{D}^{-1}\right\|_{\infty} \left\|\mbf{B}^i\right\|_{\infty} \left\|\mbf{h}^{\ell}\right\|_1 \le \Delta^{s+1}s^st_0 {\omega_2}.
\end{equation}

The vector $ \mbf{w}$ is contained in
\[
\begin{array}{rcl}
&&\displaystyle\left\{ \mbf{y} \in \mbb{R}^{t}_+ : \beta\mbf{B}^i\mbf{h}^{\ell}+\mbf{A}^i \mbf{y}= \mbf{0} \right\}\\[.2 cm]
&=&\displaystyle\left\{ \mbf{r} \in \mbb{R}^{t}_+ : \mbf{A}^i \mbf{r}= \mbf{0} \right\} + \conv\left\{\beta\mbf{v}^{\mbf{D}} : \mbf{D} \text{ is a feasible basis matrix for $\mcf{Q}$}\right\}.
\end{array}
\]
Hence, there exists $\mbf{r} \in \mbb{R}^t_+\cap \ker\mbf{A}^i$ and coefficients $\tau_{\mbf{D}} \in \mbb{R}_+$ for each feasible basis matrix for $\mcf{Q}$ such that $\mbf{w} =\mbf{r} + \sum_{\mbf{D}}\tau_{\mbf{D}} \beta \mbf{v}^{\mbf{D}}$ and $\sum_{\mbf{D}}\tau_{\mbf{D}}=1$.
By Carath\'{e}odory's Theorem, we can choose the coefficients $\tau_{\mbf{D}}$ such that at most $t-s+1$ are nonzero.
Thus, there exists at least one $\overline{\mbf{D}}$ such that $\tau_{\overline{\mbf{D}}} \ge \sfrac{1}{(t-s+1)}$.

If $\beta\ge t-s+1$, then $\tau_{\overline{\mbf{D}}} \beta \ge 1$.
Set $\overline{\mbf{w}}:= \mbf{v}^{\overline{\mbf{D}}}$.
The result follows from~\eqref{eqHadamard}.
If $\mbf{w} \le \mbf{v}^i_{\ell}$, then $\mbf{r} = \mbf{0}$ by~\eqref{assumeNoMoreu}.
We use~\eqref{eqHadamard} to conclude
\[
\left\|\mbf{w}\right\|_1 =\left\|\sum_{\mbf{D}}\tau_{\mbf{D}} \beta \mbf{v}^{\mbf{D}}\right\|_1 \le \beta \Delta^{s+1}s^st_0 {\omega_2}.
\]
\hfill ~\qed
\end{proof}

The next lemma is where we apply the colorful Steinitz Lemma. 

\begin{lemma}[Decomposing $\widehat{\mbf{v}}$]\label{lemDecomposeV}%B2
We can write $\widehat{\mbf{v}}   = \sum_{\ell=1}^{t_0} \mbf{v}_{\ell} $,
such that  $\mbf{v}_{\ell} \in \mbb{R}^{nt}$ and 
%{\color{red}\sout{$\mbf{v}^{i}_{\ell} \in \mcf{Q}^{i}_{\ell}$ for each $\ell \in \{1, \ldots, t_0\}$ and $i \in \{1, \ldots, n\}$.
%%
%In particular,}}
%
\[
\lambda_{\ell}\mbf{B}^i \mbf{h}^{\ell}
 + \mbf{A}^i \mbf{v}^i_{\ell} = \mbf{0}
 \]
For each $\ell \in \{1, \ldots, t_0\}$, the vector $\mbf{v}_{\ell} \in \mbb{R}^{nt}_+$ can be written as
\[
%{\color{red}\mbf{v}_{\ell} = \left(\mbf{v}^1_{\ell},\ \ldots,\ \mbf{v}^n_{\ell}\right) =: \left(\mbf{v}^{1}_{\ell,0} + \sum_{j=1}^{\alpha_{\ell}} \mbf{v}^{1}_{\ell,j}, \ \ldots, \ \mbf{v}^{n}_{\ell,0} + \sum_{j=1}^{\alpha_{\ell}} \mbf{v}^{n}_{\ell,j}\right) = \sum_{j=0}^{\alpha_{\ell}} \mbf{v}_{\ell,j},}
\mbf{v}_{\ell} = \sum_{j=0}^{\alpha_{\ell}} \mbf{v}_{\ell,j},
\]
where 
\begin{enumerate}[label = \roman*)]
\item
\begin{equation*}%\label{eqBoundAlphaOne}
\sum_{\ell=1}^{t_0} \alpha_{\ell}  \ge \frac{\|\widehat{\mbf{x}}\|_\infty}{{\omega_2}}-t_0(t-s+2) .
\end{equation*}
\item $\mbf{v}_{\ell, j} \in \mbb{Z}^{nt}_+$,  $(\mbf{h}^{\ell}, \mbf{v}_{\ell, j}) \in \ker \begin{bmatrix}\mbf{B} & \mbf{A}\end{bmatrix}$ and $\|\mbf{v}^{i}_{\ell, j}\|_1 \le  \Delta^{s+1}s^s t_0 {\omega_2}$ for each $i \in \{1,\ldots, n\}$ and $j \in \{1, \ldots, \alpha_{\ell}\}$.

\smallskip
\item$ \mbf{v}_{\ell,0} \in \mbb{R}^{nt}_+$ satisfies $\mbf{A} \mbf{v}_{\ell,0} \in \mbb{Z}^{ns}$ and $\|\mbf{v}^{i}_{\ell,0} \|_{1} \le(t-s+1)\Delta^{s+1}s^s t_0 {\omega_2}$ for each $i \in \{1,\ldots, n\}$.
\smallskip
\item If $\alpha_{\ell} \ge 1$, then 
\begin{equation*}%\label{eqRearrangeV}
\bigg\|\sum_{j=1}^k\mbf{C}\mbf{v}_{\ell,j} - \frac{k}{\alpha_{\ell}}\sum_{j=1}^{\alpha_{\ell}} \mbf{C}\mbf{v}_{\ell,j} \bigg\|_{\infty} \le  40 s_0^5  \Delta^{s+2}s^s t_0 {\omega_2}
\end{equation*}
for each $k \in \{1, \ldots, \alpha_{\ell}\}$.
\end{enumerate}
\end{lemma}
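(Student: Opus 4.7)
My plan is to decompose $\widehat{\mbf{v}}$ in three stages: a Minkowski-sum split across the $t_0$ generators $\mbf{h}^{\ell}$, an integer-cycle extraction performed inside each of the $n$ diagonal blocks, and a colorful Steinitz rearrangement that controls the $\mbf{C}$-images of the extracted cycles.

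In Stage 1, I invoke Lemma~\ref{lemMinkSum}(a) for each $i\in\{1,\ldots,n\}$ to write $\widehat{\mbf{v}}^{i}=\sum_{\ell=1}^{t_0}\mbf{v}^{i}_{\ell}$ with $\mbf{v}^{i}_{\ell}\in\mbb{R}^{t}_{+}$ and $\lambda_{\ell}\mbf{B}^{i}\mbf{h}^{\ell}+\mbf{A}^{i}\mbf{v}^{i}_{\ell}=\mbf{0}$. Concatenating across $i$ yields $\mbf{v}_{\ell}\in\mbb{R}^{nt}_{+}$ with $\widehat{\mbf{v}}=\sum_{\ell}\mbf{v}_{\ell}$, which is the top-level split demanded by the lemma.

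In Stage 2, I fix $\ell$ and iterate Lemma~\ref{lem:localCycle}(i) separately inside each block $i$: at step $j$, the residual scaling factor is $\lambda_{\ell}-(j-1)$, and the lemma extracts a vector $\overline{\mbf{w}}^{i}_{\ell,j}\in\mbb{Z}^{t}_{+}$ with $\|\overline{\mbf{w}}^{i}_{\ell,j}\|_{1}\le \Delta^{s+1}s^{s}t_{0}\omega_{2}$ and $(\mbf{h}^{\ell},\overline{\mbf{w}}^{i}_{\ell,j})\in \ker[\mbf{B}^{i}~\mbf{A}^{i}]$, provided $\lambda_{\ell}-(j-1)\ge t-s+1$. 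Because $\lambda_{\ell}$ does not depend on $i$, the iteration halts at the same step in every block; I take $\alpha_{\ell}:=\max\{0,\lfloor\lambda_{\ell}\rfloor-(t-s)\}$ and assemble $\mbf{v}_{\ell,j}\in\mbb{Z}^{nt}_{+}$ from the $n$ per-block cycles, which immediately gives (ii). The residual $\mbf{v}_{\ell,0}:=\mbf{v}_{\ell}-\sum_{j=1}^{\alpha_{\ell}}\mbf{v}_{\ell,j}$ is bounded componentwise by $\mbf{v}_{\ell}$ and satisfies the kernel relation with $\beta=\lambda_{\ell}-\alpha_{\ell}<t-s+1$, so Lemma~\ref{lem:localCycle}(ii) yields the $\ell_{1}$-bound in (iii). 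Summing the pointwise estimate $\alpha_{\ell}\ge \lambda_{\ell}-(t-s+1)$ over $\ell$ and invoking $\|\widehat{\mbf{x}}\|_{\infty}\le \omega_{2}\sum_{\ell}\lambda_{\ell}$ from Lemma~\ref{lemDecomposeX} then yields (i).

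Stage 3 is the crux of the proof and is where Corollary~\ref{corAffineVarSteinitz} enters. For each $\ell$ with $\alpha_{\ell}\ge 1$, I view the construction as $n$ colored sequences of length $\alpha_{\ell}$ in $\mbb{R}^{s_{0}}$, one color per block: color $i$ supplies $(\mbf{C}^{i}\overline{\mbf{w}}^{i}_{\ell,j})_{j=1}^{\alpha_{\ell}}$, and each vector satisfies $\|\mbf{C}^{i}\overline{\mbf{w}}^{i}_{\ell,j}\|_{\infty}\le \Delta\cdot \Delta^{s+1}s^{s}t_{0}\omega_{2}=:N$. Dividing by $N$ puts every vector in the $\ell_{\infty}$-unit ball of $\mbb{R}^{s_{0}}$, and Corollary~\ref{corAffineVarSteinitz} with $d=s_{0}$ supplies permutations $\pi_{1},\ldots,\pi_{n}\in\mcf{S}^{\alpha_{\ell}}$ whose partial sums deviate from the proportional average by at most $\min\{ns_{0},40s_{0}^{5}\}$. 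Rescaling by $N$ and relabeling $\mbf{v}^{i}_{\ell,j}\leftarrow\overline{\mbf{w}}^{i}_{\ell,\pi_{i}(j)}$ yields the $40s_{0}^{5}\Delta^{s+2}s^{s}t_{0}\omega_{2}$ bound in (iv); because the relabeling is carried out per block, the kernel constraint $(\mbf{h}^{\ell},\mbf{v}_{\ell,j})\in\ker[\mbf{B}~\mbf{A}]$ is preserved and (ii) remains intact. The whole point of routing the argument through the \emph{colorful} Steinitz Lemma rather than first aggregating the $n$ blocks into a single sequence and applying Theorem~\ref{thmSteinitz} is to prevent a factor of $n$ from entering the deviation bound, which is precisely what is required for Theorem~\ref{thm4BlockVector} to stay within $\fpt(n^{\min\{t_{0}+2,s_{0}\}})$.
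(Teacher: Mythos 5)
Your proposal is correct and follows essentially the same route as the paper's proof: the Minkowski split from Lemma~\ref{lemMinkSum}(a), iterated cycle extraction via Lemma~\ref{lem:localCycle} (with the residual's $\ell_1$-norm controlled by part (ii) of that lemma), and Corollary~\ref{corAffineVarSteinitz} applied in dimension $s_0$ to the $n$ per-block sequences $\big(\mbf{C}^i\mbf{v}^{i}_{\ell,j}\big)_{j}$ after normalizing by $\Delta^{s+2}s^s t_0\omega_2$. The only part of the statement you do not address is the integrality claim $\mbf{A}\mbf{v}_{\ell,0}\in\mbb{Z}^{ns}$ in item (iii), which the paper disposes of with the one-line identity $\mbf{A}^i\mbf{v}^{i}_{\ell,0} = -(\lambda_\ell-\alpha_\ell)\mbf{B}^i\mbf{h}^\ell$; this check is worth adding since the property is used downstream.
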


\begin{proof}
Since $(\widehat{\mbf{x}}, \widehat{\mbf{v}}) \in \ker \begin{bmatrix}\mbf{B} &\mbf{A} \end{bmatrix}$, we can apply Lemma~\ref{lemMinkSum} to $\widehat{\mbf{v}}^i$ for each $i \in \{1, \dotsc, n\}$ to obtain
\begin{equation*}%\label{eqIdentifyUnits}
\widehat{\mbf{v}}   = \sum_{\ell=1}^{t_0} \mbf{v}_{\ell} =  \left(\sum_{\ell=1}^{t_0} \mbf{v}^{1}_{\ell},\ \ldots,\ \sum_{\ell=1}^{t_0} \mbf{v}^{n}_{\ell}\right),
\end{equation*}
where $\mbf{v}_{\ell} \in \mbb{R}^{nt}$ and 
%{\color{red}\sout{$\mbf{v}^{i}_{\ell} \in \mcf{Q}^{i}_{\ell}$ for each $\ell \in \{1, \ldots, t_0\}$ and $i \in \{1, \ldots, n\}$.
%%
%In particular,}}
%
\[
\lambda_{\ell}\mbf{B}^i \mbf{h}^{\ell}
 + \mbf{A}^i \mbf{v}^i_{\ell} = \mbf{0}
 \]
for each $\ell \in \{1, \ldots, t_0\}$ and $i \in \{1, \ldots, n\}$.

Let $\ell \in \{1, \ldots, t_0\}$ and assume $\lambda_{\ell}$ is large.
The vectors $\mbf{w}, \overline{\mbf{w}}$ in Lemma~\ref{lem:localCycle} satisfy $\mbf{A}^i({\mbf{w}} - \overline{\mbf{w}}) = -(\lambda_{\ell}-1) \mbf{B}^i \mbf{h}^{\ell}$, and we can apply the lemma repeatedly. 
The number of times we can apply the lemma is
\begin{equation*}
\alpha_{\ell} := 
\begin{cases} 
\floor{\lambda_\ell - (t-s+1)} & \text{if} ~\lambda_\ell \ge t-s+1\\[.15 cm]
0& \text{if}~ \lambda_\ell \le t-s.
\end{cases}
\end{equation*}

From~\eqref{eqDecomposeX} it follows that $\|\widehat{\mbf{x}}\|_{\infty} \le  \sum_{\ell = 1}^{t_0} \lambda_{\ell} {\omega_2}$.
Thus,
\begin{equation*}%\label{eqBoundAlphaOne}
\sum_{\ell=1}^{t_0} \alpha_{\ell} \ge \sum_{\ell = 1}^{t_0} \big(\lambda_{\ell} - (t-s+2)\big) \ge \frac{\|\widehat{\mbf{x}}\|_\infty}{{\omega_2}}-t_0(t-s+2) .
\end{equation*}
We apply Lemma~\ref{lem:localCycle} $\alpha_{\ell}$ times to $\mbf{w} = \mbf{v}^i_{\ell}$.
After this, the vector $\mbf{v}^{i}_{\ell}$ can be written as 
\[
\mbf{v}^{i}_{\ell} = \mbf{v}^{i}_{\ell,0} + \sum_{j=1}^{\alpha_{\ell}} \mbf{v}^{i}_{\ell,j},
\]
where $ \mbf{v}^{i}_{\ell,0} \in \mbb{R}^t_+$, and $\mbf{v}^i_{\ell, j} \in \mbb{Z}^t_+$ and $(\mbf{h}^{\ell}, \mbf{v}^{i}_{\ell, j}) \in\ker \begin{bmatrix} \mbf{B}^i & \mbf{A}^i \end{bmatrix}$ for each $j \in \{1, \ldots, \alpha_{\ell}\}$.
The fact that $\|\mbf{v}^{i}_{\ell, j}\|_1 \le \Delta^{s+1}s^s t_0 {\omega_2}$ follows from Lemma~\ref{lem:localCycle}.
Similarly, $\|\mbf{v}^{i}_{\ell, 0}\|_1 \le (t-s+1) \Delta^{s+1}s^st_0\omega_2$ from Lemma~\ref{lem:localCycle}.
Furthermore,
\[
\mbf{A}^i \mbf{v}^{i}_{\ell,0} = -(\lambda_{\ell} - \alpha_{\ell}) \mbf{B}^i \mbf{h}^{\ell} = \mbf{A}^i \mbf{v}^i_{\ell}  - \mbf{A}^i\left(\sum_{j=1}^{\alpha_{\ell}} \mbf{v}^{i}_{\ell,j}\right)  \in \mbb{Z}^{s}
\]
%The equations $\lambda_{\ell}\mbf{B}^i \mbf{h}^{\ell}+\mbf{A}^i \mbf{v}^i_{\ell} = \mbf{0}$ and $\mbf{B}^i\mbf{h}^{\ell}+\mbf{A}^i \mbf{v}^{i}_{\ell, j} = \mbf{0}$ imply for each $j \ge 1$ imply $\mbf{A}^i \mbf{v}^{i}_{\ell,0} = -(\lambda_{\ell} - \alpha_{\ell}) \mbf{B}^i \mbf{h}^{\ell} \in \mbb{Z}^{s}$.

It remains to show {\it iii).}
We use Corollary~\ref{corAffineVarSteinitz} of the colorful Steinitz Lemma (Theorem~\ref{thmVarSteinitz}).
For $i \in \{ 1, \ldots, n\}$ and $j \in\{1, \ldots, \alpha_\ell\}$, we have 
\[
\left\|\mbf{C}^i \mbf{v}^{i}_{\ell,j}\right\|_{\infty} \le \left\|\mbf{C}^i\right\|_{\infty} \left\|\mbf{v}^{i}_{\ell,j}\right\|_{1} \le  \Delta^{s+2}s^s t_0 {\omega_2}.
\]
Given that $\alpha_{\ell}\ge 1$, we may apply Corollary~\ref{corAffineVarSteinitz} in dimension $s_0$ with the $n$ sequences 
\[
\big(\mbf{C}^1 \mbf{v}^{1}_{\ell,j}\big)_{j=1}^{\alpha_\ell},~ \ldots ,~ \big(\mbf{C}^n \mbf{v}^{n}_{\ell,j}\big)_{j=1}^{\alpha_\ell}
\]
 to find permutations $\pi_{1,\ell}, \ldots, \pi_{n,\ell} \in \mcf{S}^{\alpha_{\ell}}$ such that 
\[
\bigg\|\sum_{j=1}^k \sum_{i=1}^n \mbf{C}^i \mbf{v}^{i}_{\ell,\pi_{i,\ell}(j)} - \frac{k}{\alpha_{\ell}}\sum_{j=1}^{\alpha_{\ell}} \sum_{i=1}^n\mbf{C}^i \mbf{v}^{i}_{\ell, j} \bigg\|_{\infty} \le 40 s_0^5   \Delta^{s+2}s^s t_0 {\omega_2}
\]
for each $k \in \{ 1, \ldots, \alpha_{\ell}\}$.
By relabeling, we may assume each $\pi_{i, \ell}$ is the identity permutation.
This completes the proof. 
\hfill ~\qed
\end{proof}

%%%%%%%%%%%%%%%%%%%%
%%%%%%%%%%%%%%%%%%%%

\begin{acknowledgements}
We are grateful to Fritz Eisenbrand for several helpful discussions. 
The second author was supported by a Natural Sciences and Engineering Research Council of Canada (NSERC) Discovery Grant [RGPIN-2021-02475].
The third author was supported by the Einstein Foundation Berlin. 
We thank the reviewers whose comments improved the readability of the paper and simplified some proofs.
\end{acknowledgements}

\bibliographystyle{plain}
\bibliography{references.bib}

\end{document}